\documentclass[final,onefignum,onetabnum]{siamart250211}

\usepackage{setspace}
\usepackage{microtype}
\usepackage[numbers]{natbib}
\usepackage{algorithm}
\usepackage{stmaryrd}
\usepackage{mathtools}
\usepackage{amsmath} 
\usepackage{amssymb}
\usepackage{amsfonts}
\usepackage{mathrsfs}
\usepackage{enumitem}
\usepackage{comment}
\usepackage{diagbox}
\usepackage{capt-of}
\usepackage{bm}
\usepackage{xcolor}
\usepackage{soul}
\usepackage{cancel}
\usepackage{oplotsymbl}
\usepackage{multirow}
\usepackage{algpseudocode}
\usepackage{tabto}
\usepackage[usestackEOL]{stackengine}[2013-10-15]
\usepackage{subcaption}

\newcommand{\bias}{\mathrm{Bias}}

\DeclareMathOperator\supp{supp}

\newsiamremark{remark}{Remark}

\definecolor{red1}{HTML}{D53E4F}
\definecolor{orange1}{HTML}{FDAE61}
\definecolor{paleorange1}{HTML}{FDAE61}
\definecolor{paleyellow-orange1}{HTML}{FEE08B}
\definecolor{paleyellow-green1}{HTML}{E6F598}
\definecolor{palegreen1}{HTML}{ABDDA4}
\definecolor{green1}{HTML}{66C2A5}
\definecolor{blue1}{HTML}{3288BD}
\definecolor{purple1}{HTML}{8073AC}
\usepackage{tikz}
\usetikzlibrary{patterns,positioning}

\newcommand{\meshdots}[5]{%
  \begin{scope}[shift={(#1,#2)}]
    \pgfmathtruncatemacro{\ma}{2^(#3)}
    \pgfmathtruncatemacro{\mb}{2^(#4)}
    \foreach \p in {1,...,\ma}{
      \foreach \q in {1,...,\mb}{
        \fill[black] ({#5*(\p-0.5)/\ma},{#5*(\q-0.5)/\mb}) circle (0.027);
      }
    }
  \end{scope}
}

\newcommand{\fullmeshdots}[5]{%
  \begin{scope}[shift={(#1,#2)}]
    \pgfmathtruncatemacro{\ma}{2^(#3)}
    \pgfmathtruncatemacro{\mb}{2^(#4)}
    \foreach \p in {1,...,\ma}{
      \foreach \q in {1,...,\mb}{
        \fill[black] ({#5*(\p-0.5)/\ma},{#5*(\q-0.5)/\mb}) circle (0.020);
      }
    }
  \end{scope}
}
\newcommand{\cle}[1]{#1}
\newcommand{\cleparagraph}{}

\def\x{\bm{x}}

\newenvironment{fabrice}%
  {\par}%
  {\par}%
\newcommand{\Fabrice}[1]{#1}

\newcommand{\IMT}{Universit\'e de Toulouse; UPS, INSA, UT1, UTM, Institut de Math\'ematiques de Toulouse,
CNRS, Institut de Math\'ematiques de Toulouse UMR 5219, F-31062 Toulouse, France.}
\newcommand{\SOR}{{Inria, Concace joint team between Airbus CR\&T, Cerfacs and Inria}, {Talence}, {France}, \email{clement.guillet@inria.fr}}
\newcommand{\cor}{Corresponding author}

\headers{Adaptive hierarchical sparse-grid PIC method}{F. Deluzet, C. Guillet, J. Narski, P. Pace}
\title{Hierarchical sparse-grid particle-in-cell method with locally adaptive mesh refinement}



\author{
Fabrice Deluzet\footnotemark[3]
\and Cl\'ement Guillet\footnotemark[2]\;\,$^*$\and Jacek Narski\footnotemark[3] \and Paul Pace\footnotemark[3]
}

\begin{document}

\maketitle

\footnotetext[2]{\SOR}
\footnotetext[3]{\IMT}
\footnotetext[1]{\cor}

{\cleparagraph
\begin{abstract}	
In this paper, we introduce new approximation spaces and a locally adaptive refinement strategy for the hierarchical sparse-grid PIC (HSG-PIC) method to improve the bias while preserving the noise-reduction properties of sparse-grid methods. We first propose an energy-based approximation space, which optimizes the relation between the $\mathrm{H}^1$-norm error and the number of degrees of freedom, together with a family of generalized sparse-grid spaces that continuously connects classical sparse-grid and full-grid approximations. We then develop a locally adaptive approximation strategy based on hierarchical surpluses, combined with an efficient incremental refinement algorithm that avoids solving the Galerkin problem on the complete generalized space.
Numerical experiments demonstrate that the proposed adaptive HSG-PIC method substantially improves the approximation of solutions with localized structures while maintaining the statistical advantages of sparse-grid discretizations. Compared with a standard full-grid PIC method, the adaptive approach achieves comparable or higher accuracy with a significantly reduced number of mesh nodes and particles. These results demonstrate the potential of adaptive sparse-grid PIC methods for efficient simulations of kinetic plasmas with complex solution structures.
	\end{abstract}}

    \begin{keywords}
        Sparse grid, mesh refinement, particle-in-cell, plasma physics, electrostatic, Vlasov--Poisson.
    \end{keywords}

    \begin{MSCcodes}
65N75
    \end{MSCcodes}

\section{Introduction}
Particle-in-cell (PIC) methods are among the most widely used numerical
approaches for the simulation of kinetic plasmas governed by the
Vlasov--Poisson system~\cite{hockney88,birdsall18}. They combine a
Lagrangian discretization of the Vlasov equation, in which the
distribution function is represented by a finite collection of
macro-particles whose trajectories are integrated in time, with a
mesh-based resolution of the Poisson equation for the self-consistent
electrostatic field. This particle representation introduces an
irreducible statistical error: sampling the distribution function with a
finite number of particles generates noise in all mesh-based quantities.

To analyze this error rigorously, it
is standard to decompose the spatial error into two components
of fundamentally different nature. The first component is the
deterministic bias, or grid-based error, which measures the accuracy with
which the mean values of the grid quantities approximate their exact counterparts. It is governed by the mesh resolution and the order of the
interpolation scheme.
The second component is the statistical noise, a random
fluctuation around this mean value whose magnitude is determined by the mean
number of particles per cell. For a standard PIC scheme on a uniform
grid of size $h$ in $d$ dimensions, the variance of the
statistical noise on the charge density scales as
\[
\mathcal{O}\!\left((\sqrt{N h^d})^{-1/2}\right)\]
in the $\mathrm{L}^\infty$-norm~\cite{ricketson17,deluzet22}, where $N$ is the total number of
particles.  This slow convergence makes noise
reduction \Fabrice{critical} in three-dimensional simulations, where the
statistical noise rapidly dominates the total error. This limitation has motivated numerous noise-reduction techniques, including the $\delta f$
method~\cite{aydemir94,denton95,sydora99},  filtering in the Fourier and
wavelet domains~\cite{birdsall18,gassama07}, and micro--macro
decompositions~\cite{crestetto18}.

Sparse-grid methods provide an alternative approach to noise reduction,
exploiting the structure of the approximation space rather than
post-processing the particle data. Two distinct frameworks have been
investigated in the context of PIC methods. In the sparse-grid combination technique (SGCT) PIC method~\cite{ricketson17,deluzet22,guillet24}, the
charge density is reconstructed by linearly combining partial
solutions computed on a hierarchy of coarse full component grids, with carefully chosen
combination coefficients~\cite{griebel90,bungartz04}. 
In the hierarchical sparse-grid (HSG) PIC method~\cite{guillet25,deluzet26}, 
the approximation space is instead built from a truncated
tensor product of one-dimensional multiresolution analyses~\cite{bungartz91}. In both cases, the mean number of particles per
cell on the underlying grid is substantially larger than that
on a full fine grid, resulting in a significant reduction in statistical noise at comparable computational cost. Specifically, for a
sparse-grid-, i.e., SGCT- or HSG-PIC scheme, the statistical noise scales
as
\[
\mathcal{O}\!\left({|\log h|^{(d-1)/2}}(\sqrt{N \Fabrice{h}})^{-1/2}\right)
\]
in the $\mathrm{L}^1$- or $\mathrm{L}^\infty$-norms~\cite{deluzet22,deluzet26}, compared \Fabrice{to 
\[
\mathcal{O}\!\left((\sqrt{N h^d})^{-1/2}\right)
\] of standard PIC methods} the noise reduction is substantial, growing with
the dimension $d$, and becomes decisive in three-dimensional
configurations~\cite{deluzet22-1,deluzet23,garrigues24,garrigues24-1}.

This noise reduction comes at the price of a modified bias. The sparse-grid space approximates well
functions whose mixed derivatives of high order are bounded, i.e.,
functions in the mixed Sobolev space
$\mathrm{H}^{p+1}_{\mathrm{mix}}(\Omega)$, for B-splines of degree $p$.
Under this regularity assumption, the bias of the charge density scales
as \[
\mathcal{O}\!\left(h^{p+1}|\log h|^{d-1}\right)
\]
in the $\mathrm{L}^2$- or $\mathrm{L}^\infty$-norm~\cite{deluzet25,deluzet26}, compared to $\mathcal{O}(h^{p+1})$ for the
full-grid method: the bias is slightly degraded by the polylogarithmic
factor $|\log h|^{d-1}$, but remains of the same polynomial order
in the mesh size. However, the requirement of bounded mixed derivatives is a
strong assumption, which fails precisely for the solutions of great
physical interest: distributions presenting sharp gradients, thin
filaments, or strongly anisotropic structures, as observed in diocotron
instabilities~\cite{deluzet22,garrigues21}, gradient-driven
modes~\cite{garrigues24,garrigues24-1}, or filamentation phenomena.
For such solutions, the bias of sparse-grid methods \Fabrice{deteriorates}
as \Fabrice{evidenced} by numerical experiments~\cite{deluzet25,deluzet26}.

Several strategies have been proposed to improve the bias of SGCT-PIC methods while preserving their noise-reduction properties. The
truncated method~\cite{muralikrishnan21} shifts the grid hierarchy
toward finer resolutions by removing coarse components, thereby reducing the bias at the expense of increased statistical noise
and a larger approximation space. Another approach introduces
high-order shape functions~\cite{deluzet25} to increase the
polynomial convergence order of the bias. While this strategy preserves
the sparse-grid noise reduction and improves the bias for smooth
solutions, its benefit is limited for distributions with localized sharp
features, where the required mixed-derivative regularity is lost.

{\cleparagraph
This fundamental limitation motivates the present work, whose contributions are twofold. First, building upon the HSG-PIC method introduced in~\cite{deluzet26}, we introduce two new families of approximation spaces: an energy-based sparse-grid space, designed to minimize the $\mathrm{H}^1$-norm error with respect to the number of degrees of freedom, and a family of generalized sparse-grid spaces that continuously bridges the traditional sparse-grid and full-grid approximation spaces. The latter provides the building block underlying our second contribution. 

The second contribution is a locally adaptive sparse-grid approximation space together with an efficient mesh refinement algorithm. Rather than improving the approximation uniformly over the computational domain, the proposed strategy concentrates the degrees of freedom where the solution requires them the most. This approach naturally exploits the hierarchical structure of the HSG-PIC method: the hierarchical surpluses, i.e., the coefficients associated with the basis functions at each level of the hierarchy, provide reliable local error indicators that identify regions where the approximation is insufficient. Accordingly, only basis functions whose hierarchical surpluses satisfy a prescribed refinement criterion are retained in the adaptive approximation space. Adaptive refinement strategies based on hierarchical surpluses have been successfully employed in a wide range of sparse-grid applications~\cite{griebel99,pfluger10,bokanowski13}. To construct this space efficiently, we develop an incremental refinement algorithm that explores the additional hierarchical subspaces of the generalized sparse-grid space while maintaining small linear systems through Schur complement eliminations, thereby avoiding the resolution of the linear system on the complete generalized approximation space. In contrast, incorporating local adaptivity into the SGCT-PIC method is considerably more challenging~\cite{obersteiner21,obersteiner21-1}, since the component grids are globally defined and cannot be refined locally without destroying the combination formula. Numerical experiments demonstrate that the proposed adaptive strategy substantially improves the bias for solutions with localized structures while preserving the statistical noise reduction and computational efficiency of sparse-grid PIC methods.
}

The remainder of the paper is organized as follows. In
\cref{sec:pic}, we introduce the Vlasov--Poisson system, the standard
PIC algorithm, and the SGCT-PIC method. In~\cref{sec:hsgpic}, we present the
HSG-PIC method, its different approximation space variants, and the
proposed locally adaptive mesh refinement strategy. Numerical
experiments assessing the accuracy and computational efficiency of the
method are reported in \cref{sec:num}. Finally,
\cref{sec:conclusion} summarizes the main findings and discusses
directions for future work.

Throughout the paper, the symbol $\lesssim$ denotes an inequality holding up to a positive multiplicative constant independent of the discretization parameters, and we write $\log := \log_2$.

\section{Continuous model and PIC methods}
\label{sec:pic}
\subsection{Continuous model}
Let $d$ denote the dimensionality of the problem, and $\Omega\subset \mathbb{R}^d$ be the spatial domain of interest, which, in this paper, is the \Fabrice{$d$}-dimensional torus $\Omega=\mathbb{T}^d$. 

The ions are treated as a neutralizing background and all variables are expressed in dimensionless form, with characteristic scales being the Debye length and the plasma period $\omega_P^{-1}$, given by
\[
\lambda_D = \sqrt{\frac{\varepsilon_0 T_e}{q_e n_0}},\qquad
\omega_p =\left(\sqrt{\frac{q_e n_0}{m_e \varepsilon_0}}\right).
\]
In this normalization setting, the electron mass $m_e$, characteristic temperature $T_e$, charge $q_e$,  typical electron density $n_0$ and vacuum permittivity $\varepsilon_0$ are set to unity.

\Fabrice{Denoting $f$ the phase-space distribution function of the electron species, t}he noncollisional, nonrelativistic, dimensionless Vlasov--Poisson system with an external magnetic field $\bm{B}$ is considered. \Fabrice{Let $\bm{E}$ and $\Phi$ be the electric field and potential and $\rho_0$ the ion density, the system writes:}
\begin{equation*}
    \left\{\begin{aligned}
   & \frac{\partial f}{\partial t}
+ \bm{v} \cdot \bm{\nabla}_{\bm{x}} f
- \left( \bm{E} + \bm{v} \times \bm{B} \right) \cdot \bm{\nabla}_{\bm{v}} f = 0, \\ 
&\nabla \cdot \bm{E}=\Fabrice{\rho_0}-\rho, \quad \bm{E} = -\bm{\nabla}\Phi,
\end{aligned}
\right.
\end{equation*}
where the electron charge density is defined by 
\[
\rho = \int_{\mathbb{R}^d} fd \bm{v}.
\]
The notation $\bm{\nabla}_*$ denotes the gradient operator with respect to the variable $*$, and we use the shorthand notation $\bm{\nabla} := \bm{\nabla}_{\bm{x}}$.

\subsection{STD-PIC method}
\label{sec:stdpic}
Particle-in-cell (PIC) approximations are reference particle methods applied to plasma physics. Let us briefly introduce in this section the key ingredients of these methods for electrostatic models. We refer the reader to the books~\cite{hockney88,birdsall18} for a more detailed presentation.
In the following the time dependency is omitted where unambiguous. 

\paragraph{Particle approximation}
The particle distribution is represented by a collection of $N$ numerical particles whose positions and velocities are denoted $\left(\bm{x}_{p}(t), \bm{v}_{p}(t)\right)$, for $p=1, \ldots, N$, and is approximated by
\begin{equation*}
  f_{N}(\bm{x}, \bm{v}):=\sum_{p=1}^{N} w_{p} \delta_{\bm{x}_{p}}(\bm{x}) \delta_{\bm{v}_{p}}(\bm{v}),  
\end{equation*}
where $\delta_{\bm{x}_{p}}, \delta_{\bm{v}_{p}}$ are the Dirac delta functions centered either at the particle position or velocity and $\omega_{p}$ is the weight of a numerical particle.  

\paragraph{Charge density approximation} The electron charge density is approximated by
\begin{equation}
\label{eq:9}
    \begin{aligned}
            \rho_N(\bm{x}) &:= \int_{\mathbb{R}^{d}} f_N(\bm{x}, \bm{v}) d \bm{v} = \sum_{p=1}^{N} w_{p} \delta_{\bm{x}_{p}}(\bm{x}).
    \end{aligned}
\end{equation}
We refer to $\rho_N$ as the raw Monte-Carlo density estimator: it is a sum of Dirac masses, and therefore a distribution rather than an $\mathrm{L}^2(\Omega)$ function, so that it cannot be evaluated pointwise or represented on a mesh without a further regularization or projection step. The standard PIC (STD-PIC) and SGCT-PIC schemes described in this section rely on a regularized estimator, obtained by convolving the raw Monte-Carlo estimator with a smooth kernel \cite{cottet84}, referred to as shape function \cite{birdsall18} and denoted $W$. One of the most widely used shape functions is the hat function, defined by
\begin{equation*}
    W_{\bm{h}}(\bm{x}):=\bigotimes_{i=1}^{d} W_{h_i}(x_i), \qquad
    W_{h_i}(x_i):={\frac{1}{h_i}}\max \left(1-\frac{|x_i|}{h_i}, 0\right),
\end{equation*}
where $\bm{h}=(h_1,\ldots,h_d)\in \mathbb{N}^d$ is the mesh size of the grid used for the discretization of the field equation.
The components of the mesh size define the size of the grid cells along each dimension. A uniform Cartesian grid is considered, parameterized by $h$, the size of the grid cells along any dimension, and denoted $\Omega_h$. Let $\bm{j}\in \mathbb{N}^d$ be a multi-index, the uniform Cartesian grid, also called full grid, is defined by 
\begin{align}
\label{eq:omega_h}
\Omega_{h}:=\left\{\bm{x}_{\bm{j}}:=\bm{j} h \mid \bm{j} \in I_{h}\right\}, \quad I_{h}:=\llbracket 0, h^{-1}-1 \rrbracket^d \subset \mathbb{N}^{d}.
\end{align}
This yields the definition of the regularized density estimator,
\begin{equation}
\label{eq:rho_N_hat}
    {\rho}_{h,N}(\x) := (W_{\bm{h}} * \rho_N)(\x) = \sum_{p=1}^N w_p W_{\bm{h}}(\x - \x_p),
\end{equation}
which, unlike the raw density estimator, is a continuous function and can therefore be evaluated pointwise at the mesh nodes.

\paragraph{Electric field computation}
The electric field approximation is carried out on the full grid thanks to the discrete systems
\begin{equation*}
 \hat{\bm{E}}_{h,N}= - \nabla_h \hat{\Phi}_{h,N}, \quad  -\Delta_h \hat{\Phi}_{h,N}=\rho_0 - \hat{\rho}_{h,N}, 
\end{equation*}
where 
\[
\hat{\rho}_{h,N}:= \left({\rho}_{h,N}(\x_{\bm{j}})\right)_{\bm{j}\in I_h}
\]  is the approximation of the charge density on the full grid and $\hat{\bm{E}}_{h,N}$ is the vector of the nodal values of the electric field carried by the nodes of the full grid, denoted by  $\bm{E}_{\bm{j}}$, for $\bm{j}\in I_h$, while $\Delta_h$ and $\nabla_h$ are finite difference approximations of the differential operators.

\paragraph{Interpolation of electric field}
Finally, an interpolant of the electric field is constructed from the nodal values using the shape functions, following 
\begin{equation*}
    \bm{E}_{h,N} (\bm{x})= \sum_{\bm{j}\in I_h} \bm{E}_{\bm{j}} W_h(\bm{x}_{\bm{j}}-\bm{x}).
\end{equation*}

\paragraph{Evolution of particles}
The particle properties are updated by integration of Newton's law using a leapfrog scheme with a time step $\Delta t$:
\begin{equation*}
    \begin{aligned}
        \bm{v}_p^{\kappa+1/2} = \bm{v}_p^{\kappa-1/2} +\Delta t \,  \bm{E}_{h,N}^\kappa (\bm{x}_p^{\kappa}),\qquad 
        \bm{x}_p^{\kappa+1} = \bm{x}_p^\kappa + \Delta t \, \bm{v}_p^{\kappa+1/2},
    \end{aligned}
\end{equation*}
for all $\kappa=0,\ldots,T/\Delta t$, where $T$ is the final time of the simulation.

\subsection{SGCT-PIC method}
\label{sec:sgct}
The philosophy of the sparse-grid combination technique-PIC
method, hereafter referred to as the SGCT-PIC method, is to keep the algorithmic structure of standard PIC entirely unchanged replacing the single full grid on which it operates by a small family of independent, anisotropic Cartesian grids, referred to as component grids, each coarser than the full grid in at least one direction. Relative to STD-PIC, only three steps genuinely differ. First, the density is deposited independently on every component grid, so that the field equation can be solved on each of them. Then, rather than directly using the field obtained from a single grid, SGCT-PIC recombines the fields computed independently on each component grid into a single approximation, through a linear combination with explicit, dimension-dependent coefficients~\cite{ricketson17,deluzet22,garrigues21}. It is this recombination, detailed below, that reconstructs an accuracy comparable to that of the fine full grid from a collection of much coarser and cheaper component-grid solutions. We briefly recall the method here and refer to~\cite{deluzet22,deluzet25} for further details\Fabrice{.}
\paragraph{Component grids}
For a multi-index $\bm{\ell}=(\ell_1,\ldots,\ell_d)\in\mathbb{N}^d$, and a mesh size $h_{\bm{\ell}} := \left(2^{-\ell_1},\ldots,2^{-\ell_d}\right)$, 
we denote $\Omega_{h_{\bm{\ell}}}$ the anisotropic Cartesian grid defined as in~\cref{eq:omega_h}, with a mesh size $2^{-\ell_i}$ along dimension $i$. For a target resolution $n\in\mathbb{N}^*$, the combination technique selects the family of component grids
\begin{equation}
\begin{aligned}
\label{eq:sgct:levels}
\mathscr{L}_h &:= \left\{ \bm{\ell}\in\mathbb{N}^d \;\middle|\; n \leq |\bm{\ell}|_1 \leq n+\Fabrice{\mathcal{D}}-1\right\}, \qquad n= |\log h|,  
\end{aligned}
\end{equation}
i.e., the component grids lying on the $\Fabrice{\mathcal{D}}$ outer diagonals of level $n,\ldots,n+\Fabrice{\mathcal{D}}-1$ of the level lattice $\mathbb{N}^d$ as depicted on \cref{fig:sgct}. Within this family, the mesh resolution is traded off between directions: a component grid may resolve one direction as finely as, or even more finely than, the isotropic full grid of mesh
size $h=2^{-n}$, at the price of a much coarser resolution in the remaining directions, the total resolution $|\bm{\ell}|_1$ being held between $n$ and $n+\Fabrice{\mathcal{D}}-1$ across the family.

\begin{figure}[h!]
\centering
\centering
\begin{tikzpicture}[scale=.7]
  \def\n{4}
  \def\cellsize{0.915}
  \foreach \i in {0,...,5}{
    \foreach \j in {0,...,5}{
      \draw[draw=black!12,thin] (\i,\j) rectangle ++(\cellsize,\cellsize);
    }
  }
  \foreach \i/\j in {0/4,1/3,2/2,3/1,4/0}{
    \draw[fill=orange1!20,draw=black,thick] (\i,\j) rectangle ++(\cellsize,\cellsize);
    \fullmeshdots{\i}{\j}{\i}{\j}{\cellsize}
  }
  \foreach \i/\j in {0/5,1/4,2/3,3/2,4/1,5/0}{
    \draw[fill=green1!20,draw=black,thick] (\i,\j) rectangle ++(\cellsize,\cellsize);
    \fullmeshdots{\i}{\j}{\i}{\j}{\cellsize}
  }
  \foreach \i/\j in {0/4,1/3,2/2,3/1,4/0}{
    \node[circle,draw,fill=white,inner sep=0pt,minimum size=11pt] at (\i+0.475,\j+1.18) {\scriptsize $-$};
  }
  \foreach \i/\j in {0/5,1/4,2/3,3/2,4/1,5/0}{
    \node[circle,draw,fill=white,inner sep=0pt,minimum size=11pt] at (\i+0.475,\j+1.18) {\scriptsize $+$};
  }
  \draw[->,thick] (-0.6,0) -- (6.3,0) node[right] {$\ell_1$};
  \draw[->,thick] (0,-0.6) -- (0,6.3) node[above] {$\ell_2$};
  \draw[dashed,black!50] (4.95,-0.3) -- (4.95,5.95);
  \draw[dashed,black!50] (-0.3,4.95) -- (5.95,4.95);
  \node at (4.475,-0.45) {$n$};
  \node at (-0.45,4.475) {$n$};
\end{tikzpicture}
\caption{Schematic representation, for \Fabrice{ a depth $\mathcal{D}=2$}, of the component grids \Fabrice{$\mathscr{L}_h$} defined by \cref{eq:sgct:levels}. Each cell of the level lattice $(\ell_1,\ell_2)$ shows the full nodal mesh of the corresponding anisotropic grid, of resolution $h_{\bm{\ell}}=(2^{-\ell_1},2^{-\ell_2})$, i.e., $2^{\ell_1}\times 2^{\ell_2}$ nodes. The component grids on the diagonal $|\bm{\ell}|_1=n+1$ (green, coefficient $+1$) and on the diagonal $|\bm{\ell}|_1=n$ (orange, coefficient $-1$) are recombined according to \cref{eq:sgct:combi}.}
\label{fig:sgct}
\end{figure}

\paragraph{Charge density approximation and field computation}
For each component grid, an approximation of the
electric field is computed by performing these three operations successively:
\begin{enumerate}[label=(\roman*)]
\item the regularized Monte-Carlo estimator is
defined on the component grid as in \cref{eq:rho_N_hat}, using the
anisotropic shape function $W_{h_{\bm{\ell}}}$; 
\item the discrete Poisson equation \Fabrice{$-\Delta_{h_{\bm{\ell}}}\hat{\Phi}_{h_{\bm{\ell}},N} = \rho_0 - \hat{\rho}_{h_{\bm{\ell}},N}$}
is solved on
the component grid by finite differences;
\item the electric field is computed at the nodes
of the component grid:
\[
\hat{\bm{E}}_{h_{\bm{\ell}},N} :=
-\nabla_{h_{\bm{\ell}}}\hat{\Phi}_{h_{\bm{\ell}},N}
\] and interpolated at the particle positions, defining ${\bm{E}}_{h_{\bm{\ell}},N}$,
using the same anisotropic shape function.
\end{enumerate}

\paragraph{Combination}
The component-grid fields are then recombined at the particle positions according to the combination formula:
\begin{equation}
\label{eq:sgct:combi}
\bm{E}_{h,N}^{(C)}(\bm{x}_p) :=
\sum_{i=0}^{\Fabrice{\mathcal{D}}-1}(-1)^i\binom{\Fabrice{\mathcal{D}}-1}{i}
\sum_{|\bm{\ell}|_1 = n+(\Fabrice{\mathcal{D}}-1)-i} \bm{E}_{h_{\bm{\ell}},N}(\bm{x}_p),
\end{equation}
which combines the fields obtained on each of the $d$ diagonals of \cref{eq:sgct:levels} with alternating signs and binomial weights, as depicted by~\cref{fig:sgct}. The combination formula~\cref{eq:sgct:combi} also defines an approximation of the charge density, upon replacing the electric field interpolants by the component-grid charge density approximations. 

\begin{fabrice}\paragraph{Benefits and limitations}
The sparse-grid combination technique significantly reduces the computational cost of PIC simulations. For a fixed particle-per-cell ratio, the total number of mesh nodes and particles decreases from $\mathcal{O}(h^{-d})$ to $\mathcal{O}\big(h^{-1}|\log h|^{d-1}\big)$. Moreover, the method replaces a single large linear system with $\mathcal{O}\big(|\log h|^{d-1}\big)$ small and independent sparse systems defined on anisotropic component grids. 

A fundamental limitation lies in the restrictive regularity assumptions: the combination technique requires $\mathrm{C}^{q}_{\mathrm{mix}}(\Omega)$ regularity ($\mathrm{H}^{q}_{\mathrm{mix}}(\Omega)$ space is sufficient for Galerkin formulations~\cite{griebel90,bungartz04}, see \cref{sec:hsgpic}). Consequently, solutions exhibiting large mixed derivatives (e.g., sharp gradients, thin filaments, or vortices) may suffer from a significant deterministic bias~\cite{deluzet22,deluzet22-1}, thereby motivating adaptive refinement. However, the global tensor-product structure of the component grids does not easily accommodate local refinement. Although adaptive variants exist~\cite{obersteiner21}, they come at the cost of substantial implementation complexity. Finally, the Cartesian structure of the component grids restricts the standard combination technique to rectangular-like geometries.

These limitations motivate the hierarchical Galerkin reformulation of sparse-grid-PIC methods introduced in~\cite{deluzet26} and recalled in~\cref{sec:hsgpic}.
\end{fabrice}

\section{HSG-PIC methods}
\label{sec:hsgpic}
Whereas STD-PIC discretizes the field equation on a single full grid, and SGCT-PIC recombines independent solutions computed on a collection of full, anisotropic component grids, the hierarchical sparse-grid-PIC (HSG-PIC) method~\cite{deluzet26} departs from this combination-of-full-grids paradigm altogether. Instead, it builds a single approximation space directly as a truncated hierarchical sum of small, local subspaces, ordered by their contribution to the approximation error, and formulates the field equation and the density estimator variationally on this space.
This shift from combining independent full-grid solutions to a single Galerkin problem on a hierarchical basis is what brings more flexibility: the hierarchical structure provides local error indicators, naturally compatible with adaptive refinement. We first introduce,  the hierarchical approximation spaces themselves, before turning to the discretization of the density estimator and of the field equation.
\subsection{Sparse-grid truncations and approximation spaces}
The HSG-PIC method relies on a hierarchical decomposition of a tensor-product B-spline space, and a truncation rule in which the subspaces that are assumed to contribute the least to the approximation according to regularity assumptions are discarded.
\paragraph{Univariate B-spline spaces}
For $p\in\mathbb{N}$ and a level $\ell\in\mathbb{N}$ with associated mesh size $h:=2^{-\ell}$, we introduce 
\[
S_{h}([0,1]) := \left\{ u\in \mathrm{H}^{p}([0,1]) \;\middle|\; u|_{(hj,\,h(j+1))}\in \mathbb{P}^p, \ \forall j\in I_h\right\},
\]
the space of univariate B-splines of degree $p$ and regularity $\mathrm{C}^{p-1}$, associated with the uniform partition of the unit interval of step $h$. In the above, $\mathbb{P}^p$ denotes the space of polynomials of degree at most $p$. This space admits a nodal basis $\{\varphi^p_{h,j}\}_{j\in I_h}$, with index set $I_h$ defined as in \cref{sec:stdpic}.

\paragraph{Tensor-product spaces}
For a multi-index $\bm{\ell}=(\ell_1,\ldots,\ell_d)\in\mathbb{N}^d$ and $h_{\bm{\ell}}=(h_{\ell_1},\ldots,h_{\ell_d})$, with $h_{\ell_i}=2^{-\ell_i}$, the corresponding anisotropic tensor-product B-spline space on the $d$-dimensional unit ball $\Omega$ is defined as \Fabrice{$S_{h_{\bm{\ell}}}(\Omega) := \bigotimes_{i=1}^d S_{h_{\ell_i}}([0,1])$.}

Note that the same polynomial degree is used in every dimension, while the mesh size varies from one dimension to another.

\paragraph{Hierarchical subspaces}
The tensor-product spaces are nested, i.e., $S_{h_{\bm{k}}}(\Omega)\subset S_{h_{\bm{\ell}}}(\Omega)$ whenever $\bm{k}\leq\bm{\ell}$, where the inequality holds componentwise, so that the following hierarchical decomposition holds
\begin{equation}
\label{eq:hier_decomp}
S_{h_{\bm{\ell}}}(\Omega) = \bigoplus_{\bm{k}\leq \bm{\ell}} W_{h_{\bm{k}}}(\Omega),
\qquad
W_{h_{\bm{k}}}(\Omega) := S_{h_{\bm{k}}}(\Omega) \Big/ \bigoplus_{i=1}^d S_{h_{\bm{k}-\bm{e}_i}}(\Omega),
\end{equation}
with the convention that $S_{h_{\bm{k}}}(\Omega):=\{0\}$ as soon as one component of $\bm{k}$ is negative, and where $\bm{e}_i\in\mathbb{N}^d$ denotes the $i$-th canonical unit vector. The space $W_{h_{\bm{k}}}(\Omega)$, referred to as the hierarchical increment, or hierarchical subspace of level $\bm{k}$, collects the degrees of freedom of $S_{h_{\bm{k}}}(\Omega)$ that are not already contained in any coarser tensor-product space $S_{h_{\bm{k}-\bm{e}_i}}(\Omega)$. It is spanned by a subset of the hierarchical B-spline basis functions associated with the resolution $\bm{k}$:
\begin{align*}
  W_{h_{\bm{k}}}(\Omega) = \mathrm{span}\left\{\varphi^p_{h_{\bm{k}},\bm{j}} \;\middle|\; \bm{j}\in J_{h_{\bm{k}}}\right\}, \qquad  \varphi^p_{h_{\bm{k}},\bm{j}}(\bm{x}):=\bigotimes_{i=1}^d\varphi^p_{h_{k_i},j_i}(x_i),
\end{align*}
where the hierarchical index set is defined by
\begin{align}
\label{eq:hier_set}
J_{h_{\bm{k}}} := J_{h_{k_1}}\times\cdots\times J_{h_{k_d}}\subset I_{h_{\bm{k}}},\qquad
J_{h_{k_i}} := \{j\in I_{h_{k_i}} \mid j \text{ odd}\}.
\end{align}
The set $J_{h_{k_i}}$ collects the newly created, i.e., odd, nodal indices at level $k_i$.


\paragraph{Hierarchical decomposition}
For an arbitrary spline degree $p \in \mathbb{N}$, any function $u \in \mathrm{H}^{q}_{\mathrm{mix}}(\Omega)$, $q \leq p+1$, decomposes in the hierarchical basis as
\begin{equation}
\label{eq:hier_decomp_general}
u = \sum_{\bm{\ell}\in\mathbb{N}^d} u_{\bm{\ell}}, \qquad
u_{\bm{\ell}} := \sum_{\bm{j}\in J_{h_{\bm{\ell}}}}
\alpha_{\bm{\ell},\bm{j}}\, \varphi^p_{h_{\bm{\ell}},\bm{j}}
\;\in\; W_{h_{\bm{\ell}}}(\Omega) ,
\end{equation}
where $u_{\bm{\ell}}$ is the hierarchical component of the function at level $\bm{\ell}$, and $\alpha_{\bm{\ell},\bm{j}}$ is the hierarchical surplus associated with the level and node index $\bm{\ell},\bm{j}$. 

The \Fabrice{two} lemmas below specialize this decomposition to the linear
case $p=1$, for which the surplus $\alpha_{\bm{\ell},\bm{j}}$ admits the
explicit, closed-form characterization of \cref{lem:surplus_formula}.

\paragraph{Notation}
For a multi-index $\bm{\alpha} = (\alpha_1,\ldots,\alpha_d)\in
\mathbb{N}^d$, we denote by $D^{\bm{\alpha}}u$ the corresponding weak
mixed derivative of $u$, and by $\bm{p}\in\mathbb{N}^d$
the multi-index with all components equal to $p$, so that
\begin{equation*}
D^{\bm{\alpha}} u(\bm{x})
:= \frac{\partial^{|\bm{\alpha}|_1} u(\bm{x})}
{\partial x_1^{\alpha_1}\cdots\, \partial x_d^{\alpha_d}}\,,
\qquad
D^{\bm{p}} u(\bm{x})
:= \frac{\partial^{dp} u(\bm{x})}
{\partial x_1^{p}\cdots\, \partial x_d^{p}}\,,
\end{equation*}
where $|\bm{\alpha}|_1 := \alpha_1+\cdots+\alpha_d$. In particular,
$D^{\bm{2}}u$ denotes the mixed second derivative of $u$ in every
direction, of total order $2d$, which will appear throughout the
estimates below.

We also introduce the space of $\mathrm{L}^2$-integrable functions with vanishing mean
\[
\mathrm{L}^2_0(\Omega) := \left\{ u \in \mathrm{L}^2(\Omega) \;\middle|\;
\frac{1}{\mu(\Omega)} \int_{\Omega} u \, d\bm{x} = 0
\right\}, \qquad \mu(\Omega) = \int_{\Omega} d\bm{x}.
\]

\begin{lemma}[Closed-form expression of the hierarchical surplus]
\label{lem:surplus_formula}
Let $p=1$, and fix a hierarchical level $\bm{\ell}\in\mathbb{N}^d$ together with a node index $\bm{j}\in J_{h_{\bm{\ell}}}$. We define the surplus kernel associated with this level and node as the tensor product, over each direction $i=1,\ldots,d$, of rescaled basis function at level $\ell_i$:
\[
\Psi_{\bm{\ell},\bm{j}}(\bm{x}) :=
\bigotimes_{i=1}^d \psi_{\ell_i,j_i}(x_i)
= \bigotimes_{i=1}^d \Big({-2^{-(\ell_i+1)}}\,
\varphi^1_{h_{\ell_i},j_i}(x_i)\Big) .
\]
Then, for any $u \in \mathrm{H}^{2}_{\mathrm{mix}}(\Omega)$ with the hierarchical decomposition~\eqref{eq:hier_decomp_general}, the hierarchical surplus at level $\bm{\ell}$ admits the closed-form expression
\begin{equation}
\label{eq:surplus_closed_form}
\alpha_{\bm{\ell},\bm{j}}
= \int_\Omega \Psi_{\bm{\ell},\bm{j}}(\bm{x})\,
D^{\bm{2}} u(\bm{x}) \, d\bm{x} .
\end{equation}
\end{lemma}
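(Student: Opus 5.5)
The proof reduces to the one-dimensional identity and then tensorizes. For $p=1$, the hierarchical surplus at a newly created node is the second-difference stencil. In one dimension, with $h=2^{-\ell}$ and $j$ odd, we have
\[
\alpha_{\ell,j} = u(x_j) - \tfrac12\big(u(x_j-h)+u(x_j+h)\big),
\]
because the piecewise linear interpolant of the coarser levels evaluated at $x_j$ is the average of the two neighbours at level $\ell-1$. These neighbours are nodes of coarser levels, and all finer hierarchical basis functions vanish at $x_j$. I would state this interpolation characterization first: the surplus is $\big(\prod_{i=1}^d \delta_i\big)u(x_{\bm{j}})$, where $\delta_i v(x) := v(x)-\tfrac12(v(x-h_i e_i)+v(x+h_i e_i))$. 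This uses only that the hierarchical decomposition~\eqref{eq:hier_decomp_general} for $p=1$ is the hierarchical interpolant. On the torus, $u\in \mathrm{H}^2_{\mathrm{mix}}$ is continuous, so point values make sense. At level $0$ a convention is needed; I would treat $\ell_i\ge 1$ and remark that the level-$0$ (constant/periodic) direction is handled by the same stencil with the periodic neighbour.

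The key one-dimensional step is Peano's kernel identity. For $v\in \mathrm{H}^2$, integrating by parts twice against the hat function gives
\[
\int_{x_j-h}^{x_j+h} \varphi_{h,j}(x)\, v''(x)\,dx = \frac{1}{h}\big(v(x_j+h)-2v(x_j)+v(x_j-h)\big),
\]
where $\varphi_{h,j}$ is the hat function with $\varphi_{h,j}(x_j)=1$ and support $[x_j-h,x_j+h]$. The boundary terms vanish because $\varphi_{h,j}$ vanishes at the endpoints, and $\varphi'_{h,j}=\pm 1/h$ produces the difference quotient. Hence
\[
\delta v(x_j) = -\tfrac{h}{2}\int \varphi_{h,j}\,v'' = \int \big(-2^{-(\ell+1)}\varphi_{h,j}\big)\,v'' = \int \psi_{\ell,j}\,v''.
\]
This matches the kernel in the statement, since $h/2 = 2^{-(\ell+1)}$.

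The $d$-dimensional case follows by tensorization. The operators $\delta_i$ act on distinct variables and commute. I would apply the one-dimensional identity successively in $x_1,\dots,x_d$ (Fubini), each time to the function obtained after the previous integrations. This yields
\[
\alpha_{\bm{\ell},\bm{j}} = \int_\Omega \prod_i \psi_{\ell_i,j_i}(x_i)\; \partial_{x_1}^2\cdots\partial_{x_d}^2 u\,d\bm{x},
\]
which is~\eqref{eq:surplus_closed_form}.

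The main obstacle is justifying the iterated integration by parts for $u$ only in $\mathrm{H}^2_{\mathrm{mix}}$, where pointwise traces on lines are not immediate. I would first prove the identity for smooth periodic $u$. I would then extend it by density, using that both sides are continuous on $\mathrm{H}^2_{\mathrm{mix}}$. The right-hand side is continuous because $\Psi_{\bm{\ell},\bm{j}}\in \mathrm{L}^2$. The left-hand side is continuous because point evaluation is continuous, via the embedding $\mathrm{H}^1_{\mathrm{mix}}\hookrightarrow C$, which holds since the one-dimensional $\mathrm{H}^1$ space embeds in $C$ and tensorizes.
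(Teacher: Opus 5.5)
Your proof is correct and follows the same route as the Bungartz--Griebel result that the paper simply cites: the surplus is the tensorized second-difference stencil, integrating twice by parts against the hat function gives the factor $-h/2=-2^{-(\ell_i+1)}$, and the $d$-dimensional formula follows by tensorization; your density step via $\mathrm{H}^1_{\mathrm{mix}}(\mathbb{T}^d)\hookrightarrow C(\mathbb{T}^d)$ makes explicit the regularity justification that the paper leaves to its ``locality'' remark. One small correction to your side comment: the periodic neighbour only matters at $\ell_i=1$, where both level-$0$ neighbours are the point $0\equiv 1$, whereas a level with $\ell_i=0$ contains no odd index in $J_{h_{\bm{\ell}}}$, so there is nothing to prove there (and if a constant function were attached to level $0$, its coefficient would be a point value rather than a second difference, so the identity would fail there).
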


\begin{lemma}[Estimates of the hierarchical surplus and components]
\label{lem:surplus_estimate}
Let $p=1$, $u \in \mathrm{H}^{2}_{\mathrm{mix}}(\Omega)$, with the
hierarchical decomposition~\eqref{eq:hier_decomp_general}. For every
level $\bm{\ell}\in\mathbb{N}^d$ and node $\bm{j}\in
J_{h_{\bm{\ell}}}$, the hierarchical surplus and the hierarchical
component satisfy, for
$r\in\{\infty,2\}$,
\begin{align}
\label{eq:surplus_bound_r}
\big|\alpha_{\bm{\ell},\bm{j}}\big|
&\;\lesssim\; 2^{-2|\bm{\ell}|_1}\,
\big\|D^{\bm{2}}u\big\|_{\mathrm{L}^r(\Omega)} , \\
\label{eq:component_bound_r}
\|u_{\bm{\ell}}\|_{\mathrm{L}^r(\Omega)}
&\;\lesssim\; 2^{-2|\bm{\ell}|_1}\,
\big\|D^{\bm{2}}u\big\|_{\mathrm{L}^r(\Omega)} , \\
\label{eq:component_bound_E}
\|u_{\bm{\ell}}\|_{E}
&\;\lesssim\; 2^{-2|\bm{\ell}|_1}\,
\Big(\sum_{i=1}^d 2^{2\ell_i}\Big)^{1/2}
\big\|D^{\bm{2}}u\big\|_{\mathrm{L}^r(\Omega)} ,
\end{align}
where $\|\cdot\|_E := \|\bm{\nabla}\cdot\|_{\mathrm{L}^2(\Omega)}= |\cdot|_{\mathrm{H}^1(\Omega)}$ is
the energy seminorm, with
implicit constants depending only on $d$ and $r$.
\end{lemma}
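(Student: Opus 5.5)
The plan is to derive all three estimates from the closed-form expression of \cref{lem:surplus_formula}, combined with the support properties of the linear hierarchical basis functions at a fixed level.

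\textbf{Surplus bound.} For fixed $\bm{\ell}$ and $\bm{j}\in J_{h_{\bm{\ell}}}$, the kernel $\Psi_{\bm{\ell},\bm{j}}$ is a tensor product of scaled hat functions. Each factor $\psi_{\ell_i,j_i}$ has support of length $2\cdot 2^{-\ell_i}$ and sup-norm $2^{-(\ell_i+1)}$. Hence
\[
\|\psi_{\ell_i,j_i}\|_{\mathrm{L}^1}\simeq 2^{-2\ell_i},\qquad \|\psi_{\ell_i,j_i}\|_{\mathrm{L}^2}\simeq 2^{-3\ell_i/2}.
\]
By the tensor structure, $\|\Psi_{\bm{\ell},\bm{j}}\|_{\mathrm{L}^1}\simeq 2^{-2|\bm{\ell}|_1}$ and $\|\Psi_{\bm{\ell},\bm{j}}\|_{\mathrm{L}^2}\simeq 2^{-3|\bm{\ell}|_1/2}$. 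Applying Hölder to \eqref{eq:surplus_closed_form} gives two bounds:
\begin{itemize}
\item for $r=\infty$: $|\alpha_{\bm{\ell},\bm{j}}|\le \|\Psi_{\bm{\ell},\bm{j}}\|_{\mathrm{L}^1}\|D^{\bm{2}}u\|_{\mathrm{L}^\infty}$;
\item for $r=2$: $|\alpha_{\bm{\ell},\bm{j}}|\le \|\Psi_{\bm{\ell},\bm{j}}\|_{\mathrm{L}^2}\|D^{\bm{2}}u\|_{\mathrm{L}^2(\supp\Psi_{\bm{\ell},\bm{j}})}$.
\end{itemize}
The $r=2$ estimate is the one requiring care. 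Bounding crudely by the global $\mathrm{L}^2$ norm only gives $2^{-3|\bm{\ell}|_1/2}$, not $2^{-2|\bm{\ell}|_1}$. So I will keep the localized norm for use in the component estimate, and for the pointwise surplus statement with $r=2$ I will read it in the sense established below for the component.

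\textbf{Component bounds.} The functions $\varphi^1_{h_{\bm{\ell}},\bm{j}}$ with $\bm{j}\in J_{h_{\bm{\ell}}}$ have pairwise disjoint supports, up to measure zero, because only odd indices are used. Hence on each cell $u_{\bm{\ell}}$ equals a single surplus times a hat function. For $r=\infty$ this gives $\|u_{\bm{\ell}}\|_{\mathrm{L}^\infty}=\max_{\bm{j}}|\alpha_{\bm{\ell},\bm{j}}|$, and the first estimate applies. For $r=2$, disjointness gives
\[
\|u_{\bm{\ell}}\|_{\mathrm{L}^2}^2=\sum_{\bm{j}}|\alpha_{\bm{\ell},\bm{j}}|^2\|\varphi^1_{h_{\bm{\ell}},\bm{j}}\|_{\mathrm{L}^2}^2\simeq 2^{-|\bm{\ell}|_1}\sum_{\bm{j}}|\alpha_{\bm{\ell},\bm{j}}|^2 .
\]
Inserting the localized bound $|\alpha_{\bm{\ell},\bm{j}}|^2\lesssim 2^{-3|\bm{\ell}|_1}\|D^{\bm{2}}u\|^2_{\mathrm{L}^2(\supp\Psi_{\bm{\ell},\bm{j}})}$ gives
\[
\|u_{\bm{\ell}}\|_{\mathrm{L}^2}^2\lesssim 2^{-4|\bm{\ell}|_1}\sum_{\bm{j}}\|D^{\bm{2}}u\|^2_{\mathrm{L}^2(\supp\Psi_{\bm{\ell},\bm{j}})}.
\]
The supports are disjoint, so the sum is at most $\|D^{\bm{2}}u\|_{\mathrm{L}^2}^2$. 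This yields \eqref{eq:component_bound_r} for $r=2$.

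\textbf{Energy bound.} Write $\|u_{\bm{\ell}}\|_E^2=\sum_i\|\partial_{x_i}u_{\bm{\ell}}\|_{\mathrm{L}^2}^2$. Differentiating $\varphi^1_{h_{\bm{\ell}},\bm{j}}$ in direction $i$ multiplies the relevant one-dimensional factor's scale by $2^{\ell_i}$. Concretely, $\|\partial_i\varphi^1_{h_{\bm{\ell}},\bm{j}}\|_{\mathrm{L}^2}^2\simeq 2^{2\ell_i}\|\varphi^1_{h_{\bm{\ell}},\bm{j}}\|_{\mathrm{L}^2}^2$, since $|\varphi'|=2^{\ell_i}$ on the support. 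The derivatives keep the disjoint-support property, so the same computation as above carries an extra factor $2^{2\ell_i}$. Summing over $i$ gives \eqref{eq:component_bound_E} for $r=2$.

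For $r=\infty$, I will use $\|\cdot\|_{\mathrm{L}^2}\le\|\cdot\|_{\mathrm{L}^\infty}$ on the unit-measure domain, both for $u_{\bm{\ell}}$ and its derivatives. Together with the $\mathrm{L}^\infty$ surplus bound this gives
\[
\|\partial_iu_{\bm{\ell}}\|_{\mathrm{L}^2}\le\|\partial_iu_{\bm{\ell}}\|_{\mathrm{L}^\infty}\lesssim 2^{\ell_i}\max_{\bm{j}}|\alpha_{\bm{\ell},\bm{j}}|.
\]

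\textbf{Main obstacle.} The delicate step is the $r=2$ case. The sharp rate $2^{-2|\bm{\ell}|_1}$ comes only from summing localized $\mathrm{L}^2$ norms over disjoint supports. It also depends on checking that the supports of the kernels $\Psi_{\bm{\ell},\bm{j}}$ for odd $\bm{j}$ overlap at most on boundaries. Since $\Psi_{\bm{\ell},\bm{j}}$ has the same support as the basis function, this holds.
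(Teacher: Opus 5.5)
Your argument is the one the paper relies on. The paper's proof only cites Bungartz--Griebel, whose argument is exactly yours: the closed-form surplus, H\"older against the explicit $\mathrm{L}^1$/$\mathrm{L}^2$ norms of $\Psi_{\bm{\ell},\bm{j}}$, and disjointness of the odd-index supports. Your derivations of the $r=\infty$ surplus bound and of all component and energy bounds, for both $r=\infty$ and $r=2$, are correct.

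The one place the proposal does not deliver the statement is the surplus bound with $r=2$. Your hesitation there is justified, but ``reading it in the sense of the component'' proves nothing: the inequality $|\alpha_{\bm{\ell},\bm{j}}|\lesssim 2^{-2|\bm{\ell}|_1}\|D^{\bm{2}}u\|_{\mathrm{L}^2(\Omega)}$ is false, so no argument can close it. For a counterexample, take $u=\prod_i u_i(x_i)$ with zero-mean periodic $u_i$ satisfying $u_i''=\psi_{\ell_i,j_i}-\int_0^1\psi_{\ell_i,j_i}$. The closed form then gives $\alpha_{\bm{\ell},\bm{j}}=\|D^{\bm{2}}u\|_{\mathrm{L}^2}^2$, and $\|D^{\bm{2}}u\|_{\mathrm{L}^2}^2=\prod_i\big(\tfrac16 2^{-3\ell_i}-\tfrac14 2^{-4\ell_i}\big)\simeq 2^{-3|\bm{\ell}|_1}$. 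Hence $|\alpha_{\bm{\ell},\bm{j}}|/\|D^{\bm{2}}u\|_{\mathrm{L}^2}\simeq 2^{-3|\bm{\ell}|_1/2}$, which is not $\mathcal{O}(2^{-2|\bm{\ell}|_1})$; by duality, $2^{-3|\bm{\ell}|_1/2}$ is in fact the optimal rate.

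What is true is the localized bound $|\alpha_{\bm{\ell},\bm{j}}|\le 2^{-d}(2/3)^{d/2}\,2^{-3|\bm{\ell}|_1/2}\,\|D^{\bm{2}}u\|_{\mathrm{L}^2(\supp(\varphi_{\bm{\ell},\bm{j}}))}$. This is what the cited Bungartz--Griebel lemma actually states, and it is exactly your intermediate estimate. The rate $2^{-2|\bm{\ell}|_1}$ survives only in the averaged form $\big(2^{-|\bm{\ell}|_1}\sum_{\bm{j}}|\alpha_{\bm{\ell},\bm{j}}|^2\big)^{1/2}\lesssim 2^{-2|\bm{\ell}|_1}\|D^{\bm{2}}u\|_{\mathrm{L}^2}$, which is your component bound. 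You should therefore say explicitly that the $r=2$ case of the first estimate must be replaced by this localized $2^{-3|\bm{\ell}|_1/2}$ bound, with the counterexample, rather than reinterpreted. The paper's citation-based proof does not establish the stated version either.
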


\begin{proof}[Proof of \cref{lem:surplus_formula,lem:surplus_estimate}]
See~\cite[Lemmas~3--5]{bungartz99}. Although originally formulated for homogeneous Dirichlet boundary conditions, these bounds translate directly to the periodic torus $\Omega = \mathbb{T}^d$ under a zero-mean constraint, imposed by $\mathrm{L}^2_0(\Omega)$.
Indeed, the proof relies on the
closed-form expression~\eqref{eq:surplus_closed_form} of
\cref{lem:surplus_formula} for the surplus, integrated against the
explicit norms of the hierarchical basis functions to obtain
\eqref{eq:surplus_bound_r}--\eqref{eq:component_bound_E}; it is local
and does not invoke the boundary condition, and therefore transfers
unchanged to the periodic setting and to the
zero-mean constraint considered here.
\end{proof}

\paragraph{Truncated hierarchical approximation spaces}
The hierarchical decomposition~\eqref{eq:hier_decomp} naturally suggests a family of approximation spaces, obtained by retaining only the hierarchical subspaces indexed by a prescribed subset of multi-indices.

\begin{definition}[Truncated hierarchical approximation space]
\label{def:Vstar}
For a set of hierarchical indices $\mathscr{H}_{h}^{(\star)} \subset \mathbb{N}^d$, the corresponding truncated hierarchical approximation space is defined as
\begin{equation*}
V_{h}^{(\star)}(\Omega)
:= \bigg\{
u = \sum_{\bm{\ell} \in \mathscr{H}_{h}^{(\star)}} w_{h_{\bm{\ell}}},
\quad \text{where } w_{h_{\bm{\ell}}} \in W_{h_{\bm{\ell}}}(\Omega)
\bigg\}.
\end{equation*}
Several particular choices of the index set $\mathscr{H}_{h}^{(\star)}$, some of which being represented in~\cref{fig:hsg_spaces}, are of interest in the following.
\begin{enumerate}[label=(\roman*)]
\item \label{def:1}
The $\mathrm{L}^2$-based sparse-grid approximation space, denoted by $V_{h}^{(1)}(\Omega)$, is obtained by taking the $\mathrm{L}^2$-based index set
\[
\mathscr{H}_{h}^{(1)}
:= \bigg\{
\bm{\ell} \in \mathbb{N}^d
\;\Big|\;
|\bm{\ell}|_1 \leq |\log h|,\,  \ell_i \geq 1
\bigg\}.
\]
\item \label{def:2}
The energy-, or $\mathrm{H}^1$-based, sparse-grid approximation space, denoted by $V_{h}^{(\mathrm{E})}(\Omega)$, is obtained by taking the energy-based index set
\[
\mathscr{H}_{h}^{(\mathrm{E})}
:= \bigg\{
\bm{\ell} \in \mathbb{N}^d
\;\Big|\;
|\bm{\ell}|_1 -\frac{1}{5}\log\Big(\sum_{i=1}^d4^{\ell_{i}}\Big) \leq |\log h|  -\frac{1}{5}|\log(h^2)|,\,  \ell_i \geq 1
\bigg\}.
\]
\item \label{def:inf}
The full-grid approximation space, denoted by $V_{h}^{(\infty)}(\Omega)$, is obtained by taking the full-grid index set
\[
\mathscr{H}_{h}^{(\infty)}
:= \bigg\{
\bm{\ell} \in \mathbb{N}^d
\;\Big|\;
|\bm{\ell}|_\infty \leq |\log h|,\,  \ell_i \geq 1
\bigg\}.
\]
This space is constructed using the full tensor-product rule of each unidimensional subspace, without proceeding to any truncation.
\cle{\item \label{def:offset}
The generalized\, $\mathrm{L}^2$-based, or sparse-grid, approximation space, denoted by $V_{h,\sigma}^{(1)}(\Omega)$, is obtained by taking the $\mathrm{L}^2$-based index set, augmented with $\sigma \in \mathbb{N}$ additional diagonals, \Fabrice{with $\sigma \in \{0, \ldots, |\log h|\}$}
\[
\mathscr{H}_{h,\sigma}^{(1)}
:=
\bigg\{
\bm{\ell} \in \mathbb{N}^d
\;\Big|\;
|\bm{\ell}|_1 \leq |\log h|+\sigma,\;
1 \leq \ell_i \leq |\log h|
\bigg\}.
\]
The term\textquotedblleft\,generalized\textquotedblright\, reflects that this definition yields a family of approximation spaces ranging from the $\mathrm{L}^2$-based sparse-grid space, obtained for $\sigma=0$, to the full-grid space, obtained for $\sigma=|\log h|$.}
\end{enumerate}
\end{definition}

\begin{remark}[Treatment of the trivial level $\bm{\ell}=\bm{0}$]
\label{rem:trivial_level}
On the periodic torus, the level $\bm{\ell}=\bm{0}$ corresponds to the single constant function on the whole domain. Consistently with the zero-mean constraint imposed by $\mathrm{L}^2_0(\Omega)$, this constant component is removed from the hierarchical decomposition, so that the indices $\bm{\ell}$ entering $\mathscr{H}_{h}^{(\star)}$ effectively satisfy $\ell_i \geq 1$ for at least one direction $i = 1,\ldots,d$. This is the periodic counterpart of the convention adopted for homogeneous Dirichlet boundary conditions in the original sparse-grid construction~\cite{bungartz99, bungartz04}.
\end{remark}

\begin{figure}[h!]
\centering
\begin{subfigure}[b]{0.31\textwidth}
\centering
\begin{tikzpicture}[scale=0.62]
  \def\n{4}
  \def\cellsize{0.893}
  \foreach \i in {0,...,\n}{
    \foreach \j in {0,...,\n}{
      \draw[fill=blue1!18,draw=black,thick] (\i,\j) rectangle ++(\cellsize,\cellsize);
      \meshdots{\i}{\j}{\i}{\j}{\cellsize}
    }
  }
  \draw[->,thick] (-0.6,0) -- (\n+1.3,0) node[right] {\scriptsize$\ell_1$};
  \draw[->,thick] (0,-0.6) -- (0,\n+1.3) node[above] {\scriptsize$\ell_2$};
  \draw[dashed,black!50] (\n+\cellsize,-0.3) -- (\n+\cellsize,\n+\cellsize);
  \draw[dashed,black!50] (-0.3,\n+\cellsize) -- (\n+\cellsize,\n+\cellsize);
  \node at (\n+0.45,-0.5) {\scriptsize $n$};
  \node at (-0.5,\n+0.45) {\scriptsize $n$};
\end{tikzpicture}
\caption{$V_{h}^{(\infty)}(\Omega)$: $|\bm{\ell}|_\infty\leq |\log h|$.}
\label{fig:Vinf}
\end{subfigure}
\hfill
\begin{subfigure}[b]{0.31\textwidth}
\centering
\begin{tikzpicture}[scale=0.62]
  \def\n{4}
  \def\cellsize{0.893}
  \foreach \i in {0,...,\n}{
    \foreach \j in {0,...,\n}{
      \pgfmathtruncatemacro{\s}{\i+\j}
      \ifnum\s>\n
        \draw[draw=black!25,thin] (\i,\j) rectangle ++(\cellsize,\cellsize);
      \else
        \draw[fill=green1!22,draw=black,thick] (\i,\j) rectangle ++(\cellsize,\cellsize);
        \meshdots{\i}{\j}{\i}{\j}{\cellsize}
      \fi
    }
  }
  \draw[->,thick] (-0.6,0) -- (\n+1.3,0) node[right] {\scriptsize$\ell_1$};
  \draw[->,thick] (0,-0.6) -- (0,\n+1.3) node[above] {\scriptsize$\ell_2$};
  \draw[dashed,black!50] (\n+\cellsize,-0.3) -- (\n+\cellsize,\n+\cellsize);
  \draw[dashed,black!50] (-0.3,\n+\cellsize) -- (\n+\cellsize,\n+\cellsize);
  \node at (\n+0.45,-0.5) {\scriptsize $n$};
  \node at (-0.5,\n+0.45) {\scriptsize $n$};
\end{tikzpicture}
\caption{$V_{h}^{(1)}(\Omega)$: $|\bm{\ell}|_1\leq |\log h|$.}
\label{fig:V1}
\end{subfigure}
\hfill
\begin{subfigure}[b]{0.31\textwidth}
\centering
\begin{tikzpicture}[scale=0.62]
  \def\n{4}
  \def\cellsize{0.893}
  \foreach \i in {0,...,\n}{
    \foreach \j in {0,...,\n}{
      \draw[draw=black!25,thin] (\i,\j) rectangle ++(\cellsize,\cellsize);
    }
  }
  \foreach \i/\j in {0/0,0/1,0/2,0/3,0/4,1/0,1/1,1/2,2/0,2/1,3/0,4/0}{
    \draw[fill=purple1!22,draw=black,thick] (\i,\j) rectangle ++(\cellsize,\cellsize);
    \meshdots{\i}{\j}{\i}{\j}{\cellsize}
  }
  \draw[->,thick] (-0.6,0) -- (\n+1.3,0) node[right] {\scriptsize$\ell_1$};
  \draw[->,thick] (0,-0.6) -- (0,\n+1.3) node[above] {\scriptsize$\ell_2$};
  \draw[dashed,black!50] (\n+\cellsize,-0.3) -- (\n+\cellsize,\n+\cellsize);
  \draw[dashed,black!50] (-0.3,\n+\cellsize) -- (\n+\cellsize,\n+\cellsize);
  \node at (\n+0.45,-0.5) {\scriptsize $n$};
  \node at (-0.5,\n+0.45) {\scriptsize $n$};
\end{tikzpicture}
\caption{$V_{h}^{(\mathrm{E})}(\Omega)$: \cref{def:2}.}
\label{fig:VE}
\end{subfigure}
\\[-2ex]  
\begin{subfigure}[c]{0.31\textwidth}   
\centering
\begin{tikzpicture}[scale=0.62]
  \def\n{4}
  \def\sigma{2}
  \def\cellsize{0.893}
  \foreach \i in {0,...,\n}{
    \foreach \j in {0,...,\n}{
      \pgfmathtruncatemacro{\s}{\i+\j}
      \ifnum\s>\numexpr\n+\sigma\relax
        \draw[draw=black!25,thin]
          (\i,\j) rectangle ++(\cellsize,\cellsize);
      \else
        \draw[fill=orange!25,draw=black,thick]
          (\i,\j) rectangle ++(\cellsize,\cellsize);
        \meshdots{\i}{\j}{\i}{\j}{\cellsize}
      \fi
    }
  }
  \draw[->,thick] (-0.6,0) -- (\n+1.3,0)
      node[right] {\scriptsize$\ell_1$};
  \draw[->,thick] (0,-0.6) -- (0,\n+1.3)
      node[above] {\scriptsize$\ell_2$};
  \draw[dashed,black!50]
      (\n+\cellsize,-0.3) -- (\n+\cellsize,\n+\cellsize);
  \draw[dashed,black!50]
      (-0.3,\n+\cellsize) -- (\n+\cellsize,\n+\cellsize);
  \node at (\n+0.45,-0.5) {\scriptsize $n$};
  \node at (-0.5,\n+0.45) {\scriptsize $n$};
\end{tikzpicture}
\caption{$V_{h,\sigma}^{(1)}(\Omega)$ with $\sigma=2$.}
\label{fig:V1sigma}
\end{subfigure}
\caption{
\Fabrice{Two-dimensional schematic of the hierarchical subspaces $W_{h_{\bm{\ell}}}(\Omega)$ selected in each approximation space for a fixed target level $n$. Each filled cell of the lattice $(\ell_1,\ell_2)$ marks the mesh points added by the corresponding increment, whose count $m(\ell_i)=2^{\ell_i-1}$ ($m(0)=1$) per direction grows with the level. (a) Full-grid: all levels up to $n$. (b) $\mathrm{L}^2$-based: the lower-diagonal levels $|\bm{\ell}|_1\leq n$. (c) $\mathrm{H}^1$-based: additionally drops the near-isotropic combinations near the diagonal (e.g.\ $(2,2)$, $(3,1)$, $(1,3)$, absent here but present in (b)). (d) Generalized: augments the $\mathrm{L}^2$-based sparse-grid index set with the next $\sigma$ diagonals (here $\sigma=2$).}}
\label{fig:hsg_spaces}
\end{figure}

Both the $\mathrm{L}^2$- and $\mathrm{H}^1$-based index sets answer the same underlying problem: given a fixed number of degrees of freedom, which truncation of the hierarchical decomposition~\eqref{eq:hier_decomp} minimizes the resulting approximation error, measured in a prescribed norm? The $\mathrm{L}^2$-norm-based sparse-grid space is constructed so as to minimize, at a fixed number of degrees of freedom, the $\mathrm{L}^2$ norm of the approximation error~\cite{bungartz04}: among all possible truncations of the hierarchical basis, the criterion $|\bm{\ell}|_1\leq |\log h|$ of \cref{def:1} is the one that retains, for a given budget of unknowns, the hierarchical subspaces contributing the most to the reduction of the $\mathrm{L}^2$ error.
The energy-based sparse-grid space is constructed so as to minimize, at a fixed number of degrees of freedom, the $\mathrm{H}^1$ norm of the error~\cite{bungartz04}. The $\mathrm{H}^1$ norm is a relevant metric to control the accuracy of PIC methods since it is the gradient of the potential that actually governs the dynamic of the particle pusher.

\begin{proposition}
\label{prop:3}
The energy-based, generalized $\mathrm{L}^2$-based, and full-grid approximation spaces are nested as
\begin{equation}
\label{eq:embedding}
V_{h}^{(\mathrm{E})}(\Omega) \subset V_{h}^{(1)}(\Omega) \subset V_{h,\sigma}^{(1)}(\Omega) \subset V_{h}^{(\infty)}(\Omega), \quad \sigma=1,\ldots,|\log h|-1
\end{equation}
and the dimension of the spaces, i.e. their number of mesh nodes, are given by
\begin{align*}
\dim (V_{h}^{(\mathrm{E})}) =  \mathcal{O}(h^{-1}), \quad
 \dim(V_{h}^{(1)})= \mathcal{O}\!\left(|\log h|^{d-1}h^{-1}\right),  \quad
    \dim(V_{h}^{(\infty)})= \mathcal{O}(h^{-d}).
\end{align*}
\end{proposition}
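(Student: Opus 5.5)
The plan is to reduce both parts of \cref{prop:3} to statements about the index sets. By \cref{def:Vstar}, an inclusion $\mathscr{H}^{(a)}\subset\mathscr{H}^{(b)}$ implies $V^{(a)}\subset V^{(b)}$, and since the sum \cref{eq:hier_decomp} is direct,
\[
\dim V_h^{(\star)}=\sum_{\bm{\ell}\in\mathscr{H}_h^{(\star)}}\dim W_{h_{\bm{\ell}}}=\sum_{\bm{\ell}\in\mathscr{H}_h^{(\star)}}2^{|\bm{\ell}|_1-d},
\]
using $|J_{h_{\ell_i}}|=2^{\ell_i-1}$ for $\ell_i\ge1$ from \cref{eq:hier_set}. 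Throughout I write $n=|\log h|$.

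\textbf{Nestings.}
\begin{itemize}
\item $\mathscr{H}^{(1)}_h\subset\mathscr{H}^{(1)}_{h,\sigma}$: if $|\bm{\ell}|_1\le n$ and $\ell_i\ge1$, then each $\ell_i\le n$ and $|\bm{\ell}|_1\le n+\sigma$.
\item $\mathscr{H}^{(1)}_{h,\sigma}\subset\mathscr{H}^{(\infty)}_h$: this is immediate, since the constraint $1\le\ell_i\le n$ is built into the definition.
\item $\mathscr{H}^{(\mathrm{E})}_h\subset\mathscr{H}^{(1)}_h$: this is the only nontrivial inclusion. The right-hand side of the energy criterion is $n-\tfrac{2n}{5}$, because $|\log h^2|=2n$. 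On the left, $\log\big(\sum_i4^{\ell_i}\big)\le 2|\bm{\ell}|_\infty+\log d$. Hence any $\bm{\ell}$ in the energy set satisfies
\[
|\bm{\ell}|_1-\tfrac25|\bm{\ell}|_\infty\le n-\tfrac25 n+\tfrac15\log d .
\]
I then want to show this forces $|\bm{\ell}|_1\le n$, by writing $|\bm{\ell}|_1-\tfrac25|\bm{\ell}|_\infty=\tfrac35|\bm{\ell}|_\infty+(\text{sum of the other components})$.
\end{itemize}

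The main obstacle is that last inclusion. With $\ell_i\ge1$, the left side is at least $\tfrac35|\bm{\ell}|_1$, which only gives $|\bm{\ell}|_1\lesssim n$, not $|\bm{\ell}|_1\le n$. The $\log d$ slack is also a problem, so equality cases near $|\bm{\ell}|_\infty=n$ must be checked. My first attempt is to argue monotonically: along each diagonal $|\bm{\ell}|_1=m$, the quantity $m-\tfrac15\log\sum4^{\ell_i}$ is minimized at the most anisotropic point. If $m>n$, then either some $\ell_i>n$, which I would exclude by also imposing the implicit bound $\ell_i\le n$, or the point violates the criterion. If the clean inclusion fails for small $d$, I would fall back to interpreting the sets with the standard Bungartz--Griebel energy index set, which is known to lie inside the $\mathrm{L}^2$ set~\cite{bungartz04}. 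I would also note that the nestings are stated for $1\le\sigma\le n-1$; the endpoint $\sigma=n$ gives equality with the full grid.

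\textbf{Dimension counts.}
\begin{itemize}
\item Full grid: the count factorizes as $\big(\sum_{\ell=1}^n 2^{\ell-1}\big)^d=(2^n-1)^d=\mathcal{O}(h^{-d})$.
\item $\mathrm{L}^2$-based set: group by $m=|\bm{\ell}|_1$. There are $\binom{m-1}{d-1}$ indices with $\ell_i\ge1$ on the diagonal $m$, so the dimension is $\sum_{m=d}^n\binom{m-1}{d-1}2^{m-d}$. The top term dominates geometrically, giving $\mathcal{O}(2^n n^{d-1})=\mathcal{O}(h^{-1}|\log h|^{d-1})$.
\item Energy set: I would follow the classical argument of~\cite{bungartz04}, parametrizing $\bm{\ell}$ by $m=|\bm{\ell}|_1$ and its maximal component. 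The admissible diagonals $m=n-j$ then carry only a number of indices bounded polynomially in $j$, with weight $2^{n-j}$. Summing the convergent series $\sum_j \mathrm{poly}(j)\,2^{-j}$ removes the logarithmic factor and yields $\mathcal{O}(h^{-1})$. Making this count rigorous, namely bounding how many $\bm{\ell}$ with $|\bm{\ell}|_1=n-j+k$ satisfy $k\le\tfrac15(2|\bm{\ell}|_\infty-2n)+\mathcal{O}(1)$, is the second delicate step. I would handle it by splitting according to the direction that achieves $|\bm{\ell}|_\infty$.
\end{itemize}
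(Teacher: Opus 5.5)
Your reduction to index sets, the two easy inclusions, and the full-grid and $\mathrm{L}^2$ counts are correct. The paper does none of this: it simply cites Lemmas~3.6 and~3.9 and (3.31) of~\cite{bungartz04}. A direct verification is therefore a legitimate and more self-contained alternative. The gap is the inclusion $\mathscr{H}^{(\mathrm{E})}_h\subset\mathscr{H}^{(1)}_h$, which you leave unproved.

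Write $R$ for the sum of the non-maximal components. Your lower bound $\tfrac35|\bm{\ell}|_\infty+R\ge\tfrac35|\bm{\ell}|_1$ throws away exactly the term that absorbs the $\tfrac15\log d$ slack. Neither fallback is acceptable as stated:
\begin{itemize}
\item Imposing $\ell_i\le n$ adds a constraint that is not part of $\mathscr{H}^{(\mathrm{E})}_h$.
\item Substituting the Bungartz--Griebel energy set proves a statement about a different space, unless you also show that the paper's set lies inside a suitably shifted one. The paper's sets are offset from those of~\cite{bungartz04}: its $\mathrm{L}^2$ set is $|\bm{\ell}|_1\le n$ rather than $|\bm{\ell}|_1\le n+d-1$, and its energy threshold is $\tfrac35 n$.
\end{itemize}

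The missing step is one line. Write $\tfrac35|\bm{\ell}|_\infty+R=\tfrac35|\bm{\ell}|_1+\tfrac25R$ and use $R\ge d-1$, which holds because every component is at least $1$. Your inequality then gives
\[
\tfrac35|\bm{\ell}|_1\le\tfrac35 n+\tfrac15\big(\log d-2(d-1)\big)\le\tfrac35 n ,
\]
since $\log d\le d-1$ for all $d\ge1$. Hence $|\bm{\ell}|_1\le n$, with no case analysis near $|\bm{\ell}|_\infty=n$. Your monotonicity route can also be completed without extra constraints, since the diagonal minimum is attained at $(m-d+1,1,\ldots,1)$ and grows with $m$. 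The direct bound is simpler.

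The same inequality, $5|\bm{\ell}|_1-2|\bm{\ell}|_\infty\le 3n+\log d$, makes your energy count rigorous. On the diagonal $|\bm{\ell}|_1=n-j$ it forces $R\le(3j+\log d)/2$. So at most $d\binom{\lfloor(3j+\log d)/2\rfloor}{d-1}$ indices are admissible there, each of weight $2^{n-j-d}$. Summing gives $\sum_{j\ge0}\mathrm{poly}(j)\,2^{n-j}=\mathcal{O}(2^n)=\mathcal{O}(h^{-1})$.

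Finally, your side remark that $\sigma=n$ recovers the full grid holds only for $d=2$. In general $\sigma=(d-1)n$ is required, though this does not affect the proposition.
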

\begin{proof}[Proof of \cref{prop:3}]
The embedding~\eqref{eq:embedding} follows from~\cite[lemma 3.9]{bungartz04}; the complexity estimates follow from~\cite[lemma 3.6 and equation (3.31)]{bungartz04}.
\end{proof}
 
\begin{remark}[The economic and analytical paradox of energy-based truncation]
\label{rem:paradox_truncation}
The $\mathrm{L}^2$-based and energy-based index sets select fundamentally different hierarchical subspaces, which accounts for the difference in grid complexity. Understanding why the energy-based space is strictly smaller than the $\mathrm{L}^2$-based space may seem paradoxical at first glance, since controlling the $\mathrm{H}^1$ norm is a strictly stronger requirement than controlling the $\mathrm{L}^2$ norm. The resolution lies in evaluating which hierarchical subspaces deliver the best accuracy per degree of freedom ratio for each respective norm. This paradox is directly reflected in the analytical error estimates provided by \cref{lem:surplus_estimate}.

   \paragraph{The $\mathrm{L}^2$ viewpoint (Multiplicative profit)} According to the standard error bounds \cref{eq:component_bound_r}, the contribution of a hierarchical subspace to the global $\mathrm{L}^2$ error depends solely on the total level $|\bm{\ell}|_1$, regardless of its distribution. 
 Consequently, near-isotropic multi-index ($\ell_1 \approx \cdots \approx \ell_d$) subspaces and highly anisotropic ones are treated indiscriminately in the truncation, which explains the complexity of the $\mathrm{L}^2$-based approximation space inherits the dimension-dependent factor $\mathcal{O}(|\log h|^{d-1})$.

\paragraph{The $\mathrm{H}^1$ viewpoint (Additive penalty)}  
Conversely, evaluating fields under the energy norm introduces the gradient operator, which acts additively across coordinate directions via the partial derivatives ($\sum_{i=1}^d \|\partial_{x_i} \cdot\|_{\mathrm{L}^2}^2$). As established in the error estimate~\cref{eq:component_bound_E}, the bounds for a subspace are penalized by the Euclidean norm of the directional derivative scales, namely $(\sum_{i=1}^d 2^{2\ell_i})^{1/2} = (\sum_{i=1}^d 4^{\ell_i})^{1/2}$. This penalty completely destroys the multiplicative advantage of isotropic refinements. To incorporate this accuracy benefit degradation into an applicable selection criterion, the fractional cost-benefit ratio is linearized by taking its logarithm in base 2. This mathematical transformation directly yields the corrective term $\frac{1}{5}\log_2\big(\sum_{i=1}^d 4^{\ell_i}\big)$ featured in the definition of the energy-based index set. Since near-isotropic blocks maximize this logarithmic penalty, they fail the threshold inequality and are aggressively filtered out, successfully compressing the final spatial complexity to a dimension-independent $\mathcal{O}(h^{-1})$.
\end{remark}

\subsection{Resolution of field equations}
\subsubsection{Variational formulation of the Poisson equation}
\label{sec:poisson_variational}
We now turn to the resolution of the Poisson equation for determining the electric
potential.

\paragraph{Continuous problem}
A variational formulation of the Poisson equation is considered within the space of functions with vanishing mean, it writes as:
\begin{equation}
\label{thm:3:eq:2}
\left\{\begin{aligned}
  &\text{Find $\Phi\in V:= \mathrm{H}^1(\Omega) \cap \mathrm{L}^2_0(\Omega),\, $ such that}\quad a(\Phi,\varphi) = l(\varphi), \qquad \forall \varphi \in V , \\
&\text{ with } \quad  a(\psi,\varphi) := \big(\bm{\nabla} \psi , \bm{\nabla}
\varphi\big)_{\mathrm{L}^2(\Omega)}, 
\quad
l(\varphi) := \int_{\Omega} \rho \, \varphi \, d\bm{x}\, \quad \forall (\psi,\varphi)
\in V\times V \,.
\end{aligned}\right.
\end{equation}

\paragraph{Discrete problem}
\label{sec:hsgpic_density_projection}
Problem~\eqref{thm:3:eq:2} is discretized by a Galerkin method on the truncated hierarchical space $V_{h,0}^{(\star)}(\Omega) := V_h^{(\star)}(\Omega) \cap \mathrm{L}^2_0(\Omega)$ introduced in \cref{def:Vstar}, for any of the choices $\star = 1, E, \infty$, and $\sigma=0,\ldots,|\log h|$. Unlike STD- and SGCT-PIC schemes, presented in~\cref{sec:stdpic,sec:sgct}, which discretize the regularized estimator of~\cref{eq:rho_N_hat}, the hierarchical sparse-grid method acts directly on the raw estimator introduced in~\cref{eq:9}, through its Galerkin projection onto the approximation space. 
Since the raw estimator is a distribution and not an $\mathrm{L}^2(\Omega)$ function, this projection is characterized through the duality pairing extending the $\mathrm{L}^2(\Omega)$ inner product to distributions: $\Pi_h^{(\star)} \rho_N$ is the unique element of $V_h^{(\star)}(\Omega)$ such that
\begin{equation}
\label{eq:density_projection}
\big(\Pi_h^{(\star)} \rho_N,\, v_h\big)_{\mathrm{L}^2(\Omega)}
= \big\langle \rho_N, v_h \big\rangle_{\mathrm{H}^{-1}_{\mathrm{mix}},\,
\mathrm{H}^1_{\mathrm{mix}}},
\qquad \forall v_h \in V_h^{(\star)}(\Omega),
\end{equation}
where $\langle \cdot,\cdot\rangle_{\mathrm{H}^{-1}_{\mathrm{mix}},\,
\mathrm{H}^1_{\mathrm{mix}}}$ denotes the duality pairing between
$\mathrm{H}^{-1}_{\mathrm{mix}}(\Omega)$ and its dual
$\mathrm{H}^1_{\mathrm{mix}}(\Omega)$, well-defined since the Dirac delta distribution belongs to $\mathrm{H}^{-1}_{\mathrm{mix}}(\Omega)$. This
pairing only requires $v_h \in \mathrm{H}^1_{\mathrm{mix}}(\Omega)$, and
is therefore meaningful for the B-splines of any degree $p \geq 1$ used as basis functions of the
approximation spaces defined in~\cref{def:Vstar}. Indeed, such splines belong
to $\mathrm{H}^p_{\mathrm{mix}}(\Omega) \subset \mathrm{H}^1_{\mathrm{mix}}
(\Omega)$ for every $p \geq 1$.

The construction \eqref{eq:density_projection} therefore plays, for the hierarchical method, the role that the convolution with the shape
function plays in \cref{eq:rho_N_hat} for the STD-
and SGCT-PIC schemes: both produce, from the raw density estimator, an
element of the relevant approximation space that can be used as the
source term of the discrete Poisson equation. 

Specifically, the discrete problem consists in finding $\Phi_{h,N}^{(\star)} \in V_{h,0}^{(\star)}(\Omega)$
such that
\begin{equation}
\label{eq:5}
a\big(\Phi_{h,N}^{(\star)}, \varphi_h\big) = l_{h}(\varphi_h),
\qquad \forall \varphi_h \in V_{h,0}^{(\star)}(\Omega),
\end{equation}
where the discrete linear form is built from the Galerkin projection defined above in~\cref{eq:density_projection},
\[
l_{h}(\varphi_h) := \big(\Pi_h^{(\star)} \rho_N, \varphi_h
\big)_{\mathrm{L}^2(\Omega)}.
\]
The choice of the approximation space governs the trade-off, as discussed in
Remark~\ref{rem:paradox_truncation}, between the accuracy of the discrete potential and the number of degrees of freedom involved in
\eqref{eq:5}.

\subsubsection{Resulting linear system}
\label{sec:hierarchical_linear_systems}
We now derive the linear system involved in the computation of the electric field at each step of the HSG-PIC algorithm. 

\paragraph{Resolution of Poisson equation}
Reordering the basis functions of the approximation space with vanishing mean into a single sequence 
\begin{align}
\label{eq:base_2}
\{\varphi_1,\ldots,\varphi_{N_{h,0}^{(\star)}}\}, \qquad N_{h,0}^{(\star)} := \dim (V_{h,0}^{(\star)}(\Omega)),
\end{align}
the hierarchical representation~\cref{eq:hier_decomp_general} of any function of this space can be rewritten 
\[u_h = \sum_{j=1}^{N_{h,0}^{(\star)}} \alpha_j \varphi_j,
\]
where the vector $\bm{\alpha}$ collects the hierarchical surpluses in this reordering. The discrete variational problem~\eqref{eq:5} is tested against every basis functions $\varphi_{i}$, yielding
\begin{align*}
a\big(\Phi_h^{(\star)},\, \varphi_{i}\big)
= l_h\big(\varphi_{i}\big)
= \big(\Pi_h^{(\star)} \rho_N,\, \varphi_{i}
\big)_{\mathrm{L}^2(\Omega)}, \qquad 1\leq i \leq N_{h,0}^{(\star)}.
\end{align*}
Decomposing both the electric potential and the Galerkin projection of the density into the base~\cref{eq:base_2}, we obtain the stiffness linear system
\begin{align}
\label{eq:stiff_system}
\mathbf{K} \bm{\beta} = \bf{b},
\end{align}
where the vector $\bm{\beta}$ collects the hierarchical surpluses of the electric potential, and where the stiffness matrix is defined by
\begin{align*}
\mathbf{K}_{i,j}=  \int_{\Omega} \bm{\nabla}\varphi_i \cdot \bm{\nabla}\varphi_j \, d\bm{x},  \qquad 1\leq i,j \leq  N_{h,0}^{(\star)}.
\end{align*}
The right-hand side $\bf{b}$ is assembled directly from the
particle positions, owing to the sifting property of the Dirac $\delta$ function under the duality pairing, as
\begin{align*}
\Fabrice{\big(\Pi_h^{(\star)} \rho_N,\, \varphi_{i}
\big)_{\mathrm{L}^2(\Omega)}=}\big\langle \rho_N,\varphi_i\big\rangle_{\mathrm{H}^{-1}_{\mathrm{mix}},\,\mathrm{H}^1_{\mathrm{mix}}} &= \sum_{s=1}^N w_s\, \big\langle \delta(\cdot-\bm{x}_s), \varphi_i\big\rangle_{\mathrm{H}^{-1}_{\mathrm{mix}},\, \mathrm{H}^1_{\mathrm{mix}}}\\
 &= \sum_{s=1}^N w_s\, \varphi_i(\bm{x}_s):= \mathbf{b}_i,  \Fabrice{\qquad 1\leq i \leq  N_{h,0}^{(\star)}}.
\end{align*}

The hierarchical structure of the basis ensures that
the stiffness matrix is globally sparse: a basis function associated with a level overlaps only with
neighboring basis functions of comparable or coarser levels,  so that the
number of nonzero entries per row of the matrix
remains bounded independently of the number of mesh nodes.

\paragraph{Evaluation of the electric field}
The electric field is evaluated at the particles' position
by direct differentiation of the discrete potential. It reads
\begin{equation}
\label{eq:field_eval}
\bm{E}_{h,N}^{(\star)}(\bm{x}_s)
= -\bm{\nabla}\Phi_{h,N}^{(\star)}(\bm{x}_s)
= -\sum_{\bm{\ell}\in\mathscr{H}_h^{(\star)}}
\sum_{\bm{j}\in J_{h_{\bm{\ell}}}}
\beta_{\bm{\ell},\bm{j}}\,
\bm{\nabla}\varphi_{h_{\bm{\ell}},\bm{j}}^p(\bm{x}_s),
\end{equation}
and does not require any additional linear solve: the gradient is computed by
differentiating each hierarchical basis function analytically and
summing its contribution, weighted by the surplus obtained in~\eqref{eq:stiff_system}. This choice keeps the electric field approximation exactly
equal to the gradient of the computed potential, with no further
approximation, and is consistent with the error estimates
of~\cite{deluzet26}, which bound the quantity $\bias[\bm{\nabla}\Phi_{h,N}^{(\star)}]$
directly rather than that of a separately projected field.

Nonetheless, for linear B-splines, the gradient of the basis functions is piecewise constant and discontinuous across cell
interfaces, so that \cref{eq:field_eval} is not defined when
the particle's position coincides exactly with a mesh node; in
practice this is a measure-zero event that does not affect the particle
dynamics. For $p\geq 2$, the gradient of the basis functions is
continuous and \cref{eq:field_eval} is well-defined everywhere on
the domain.

{\cleparagraph
\subsection{Locally adaptive mesh refinement}
The objective of mesh refinement is to improve the approximation of the solution while keeping the dimension of the approximation space as small as possible. Ideally, only those hierarchical contributions that significantly reduce the approximation error should be added. \Fabrice{Since our primary goal is to improve the approximation of functions featuring large mixed derivatives, we enrich the sparse-grid space by reintroducing selected hierarchical subspaces. These subspaces belong to the full-grid space, but are originally discarded by the sparse-grid truncation that defines $V^{(1)}_h$.}

The hierarchical surpluses measure the contribution of each basis function to the approximation. For sufficiently smooth functions, they decay rapidly as the hierarchical level increases, as established in~\cref{lem:surplus_estimate,eq:surplus_bound_r}. This decay justifies both the static truncation criterion introduced in~\cref{def:Vstar} and the use of hierarchical surpluses as \emph{a posteriori} local error indicators for adaptive refinement.

Applying H\"older's inequality to the closed-form expression of the hierarchical surplus~\cref{eq:surplus_closed_form} yields
\begin{align*}
|\alpha_{\bm{\ell},\bm{j}}|
\lesssim
h_{\ell_1}\cdots h_{\ell_d}
\left\|
D^{\bm{2}}u
\right\|_{\mathrm{L}^1(\supp(\varphi_{\bm{\ell},\bm{j}}))},
\end{align*}
for linear B-spline approximation functions.
Hence, a large hierarchical surplus indicates the presence of large second-order derivatives of the solution over the support of the corresponding basis function. 
Motivated by this estimate, we retain only the basis functions whose normalized hierarchical surpluses exceed a prescribed threshold. More precisely, for a tolerance $\epsilon>0$, we define the locally adaptive, or $\epsilon$-based, hierarchical index set by
\begin{align}
\label{eq:hier_index}
J_{h_{\bm{\ell}},\epsilon}
=
J_{h_{\bm{\ell}}}
\cap
\left\{
\bm{j}
~
\middle|
~
|\alpha_{\bm{\ell},\bm{j}}|\,
h_{\ell_1}^{-1}\cdots h_{\ell_d}^{-1}
>
\epsilon
\right\}.
\end{align}

Every approximation space introduced in~\cref{def:Vstar} is completely characterized by two index sets: a level index set, which specifies the retained hierarchical subspaces, and a hierarchical index set associated with each retained subspace, which specifies the selected basis functions. In all previously introduced approximation spaces, the hierarchical index set is complete, i.e., every basis function belonging to a retained subspace is included. We now introduce a new family of approximation spaces, parameterized by $\sigma$ and $\epsilon$, in which the generalized $\mathrm{L}^2$-based level index set is combined with the locally adaptive hierarchical index set defined in~\cref{eq:hier_index}. These locally adaptive sparse-grid approximation spaces are 
defined by extending~\cref{def:Vstar},~\cref{def:offset}, to the 
$\epsilon$-based hierarchical index set $J_{h_{\bm{\ell}},\epsilon}$.

\Fabrice{\begin{definition}[Locally adaptive sparse-grid approximation space]\label{def:Aspace} Given a target function and refinement tolerance $\epsilon>0$, let $J_{h_{\bm{\ell}},\epsilon}$ be the corresponding set of active spatial indices at level $\bm{\ell}$, determined according to \cref{eq:hier_index} by the hierarchical surplus criterion. The locally adaptive sparse-grid approximation space is then defined by
\begin{equation*}
V_{h,\sigma,\epsilon}^{(1)}(\Omega)
:= \mathrm{span} \left\{ \varphi^p_{h_{\bm{\ell}},\bm{j}} \;\middle|\; \bm{\ell} \in \mathscr{H}_{h,\sigma}^{(1)}, \quad \bm{j}\in J_{h_{\bm{\ell}},\epsilon} \right\}.
\end{equation*}
\end{definition}}

Constructing this approximation space requires exploring the hierarchical subspaces of the generalized $\mathrm{L}^2$-based approximation space that are not contained in the original sparse-grid approximation space and selecting the basis functions whose hierarchical surpluses satisfy~\cref{eq:hier_index}.

\paragraph{Galerkin projection of the density estimator}
The adaptive criterion is based on the hierarchical surpluses of the particle density. In practice, these surpluses are obtained from the Galerkin projection of the density estimator onto the generalized $\mathrm{L}^2$-based approximation space.


Expanding the Galerkin projection in the hierarchical basis \Fabrice{of the generalized space analogously to~\cref{eq:base_2}} and testing~\cref{eq:density_projection} against each basis function $\varphi_i$ \Fabrice{for $1 \leq i \leq N_{h,\sigma}^{(1)} := \dim\big(V_{h,\sigma}^{(1)}\big)$} yields the mass system

\begin{align}
\label{eq:mass_system}
\mathbf{M}\bm{\alpha}
=
\bm{b}, \qquad \mathbf{M}_{i,j}
=
\int_{\Omega}
\varphi_i\varphi_j\,d\bm{x},
\quad
b_i
=
\sum_{s=1}^N
w_s\,\varphi_i(\bm{x}_s),
\quad
1\le i,j\le N_{h,\sigma}^{(1)}.
\end{align}

Unlike standard finite-element or finite-difference PIC schemes, where the charge density is represented in a nodal basis, the value of the projected density cannot be recovered directly from a single coefficient. Indeed, the right-hand side of~\cref{eq:mass_system} only provides the quantities $\langle\rho_N,\varphi_i\rangle$, corresponding to the action of the particle density on the basis functions, rather than the hierarchical surpluses themselves. Since the hierarchical basis is not orthogonal, the mass matrix is not diagonal, and recovering the hierarchical surpluses therefore requires solving the linear system~\cref{eq:mass_system}.

Owing to the local support of the hierarchical basis functions, the mass matrix is sparse, making~\cref{eq:mass_system} well suited to efficient sparse linear solvers. This system is assembled and solved once per time step to obtain the hierarchical surpluses of the particle density. However, solving~\cref{eq:mass_system} over the entire generalized $\mathrm{L}^2$-based approximation space is computationally expensive because this space is significantly larger than the final locally adaptive approximation space. Most of the associated basis functions are eventually discarded by the adaptive criterion. The objective is therefore to identify the relevant basis functions without solving the global system over the complete generalized approximation space.

\paragraph{Strategy: incremental exploration via Schur complement}
To this end, we explore the generalized $\mathrm{L}^2$-based approximation space incrementally, one hierarchical subspace at a time. After each exploration step, the basis functions satisfying the admissibility criterion are incorporated into the current approximation space, while the remaining ones are discarded. When exploring a new hierarchical subspace, the unknowns associated with the current approximation space are eliminated through a Schur complement reduction, so that the resulting linear system only involves the unknowns associated with the newly explored subspace. The cost of this reduction is the Cholesky factorization of the mass matrix associated with the current approximation space, whose dimension remains moderate throughout the refinement procedure.

In the initialization phase, we introduce a coarse $\mathrm{L}^2$-based approximation space, denoted by $V_{h^0}^{(1)}$, associated with a coarse mesh size $h^0=2^{-n_0} > h=2^{-n}$, chosen such that the computational cost of solving the corresponding linear system is negligible. We assemble the associated coarse mass matrix and compute its Cholesky factorization.
The initial hierarchical surpluses $\bm{\alpha}_0$ of the charge density are computed by solving the coarse mass system via forward and backward substitutions: $\mathbf{L}_0 \mathbf{L}_0^T \bm{\alpha}_0 = \mathbf{b}_0$, where $\mathbf{L}_0 $ is the Cholesky factor of the coarse mass matrix.
The active index set is then initialized by retaining only the indices whose surpluses satisfy the thresholding criterion,
\begin{equation}
\label{eq:init_index_set}
\mathcal{I}_{0,\epsilon} := \left\{(\bm{\ell},\bm{j})\;  \middle| \; \bm{\ell} \in \mathscr{H}_{h^0}^{(1)}, \, \bm{j} \in {J}_{h_{\bm{\ell}}} \; \text{ with } |(\alpha_0)_{\bm{\ell},\bm{j}}|\,
h_{\ell_1}^{-1}\cdots h_{\ell_d}^{-1} >\epsilon\right\},
\end{equation}
which defines the initial locally adaptive space
\begin{equation}
  V_{h^0,0,\epsilon}^{(1)} = \mathrm{span}\big\{ \varphi^p_{h_{\bm{\ell}},\bm{j}} \;\big|\; (\bm{\ell}, \bm{j}) \in \mathcal{I}_{0,\epsilon} \big\}.
\end{equation} 
The remaining hierarchical subspaces belonging to the full $\mathrm{L}^2$-based space are subsequently explored incrementally, diagonal by diagonal along the hierarchy of level sums. For $k = k_0, \dots, \sigma$, with $k_0 = |\log h^0| - |\log h|$, the $k$-th diagonal set of level multi-indices is defined by
\begin{equation}
\label{eq:diagonal_def}
\mathcal{D}_k := \left\{ \bm{\ell} \in \mathbb{N}^d \;\middle|\; | \; \bm{\ell}|_1 = |\log h| + k \right\}.
\end{equation}

At the beginning of step $k$, we assemble the mass matrix $\mathbf{M}_{AA}$ corresponding to the current adaptive space $V_{h,k,\epsilon}^{(1)}$ and compute its Cholesky factor once:
\begin{equation}
\label{eq:current_cholesky}
\mathbf{M}_{AA} = \mathbf{L}_{AA} \mathbf{L}_{AA}^T.
\end{equation}
Each candidate hierarchical subspace $W_{\bm{\ell}}$ for $\bm{\ell} \in \mathcal{D}_k$ is then explored independently by considering the augmented space
\begin{equation}
\label{eq:augmented_space}
\widetilde{V} = V_{h,k,\epsilon}^{(1)} \oplus W_{\bm{\ell}}.
\end{equation}

Partitioning the unknowns into those associated with the current active space $V_{h,k,\epsilon}^{(1)}$, these unkonws being denoted $A$, and those associated with the candidate subspace $W_{\bm{\ell}}$ and denoted $C^{\bm{\ell}}$, the augmented Galerkin mass system reads
\begin{equation}
\label{eq:block_system}
\begin{pmatrix}
\mathbf{M}_{AA} & \mathbf{M}_{AC^{\bm{\ell}}} \\
\mathbf{M}_{AC^{\bm{\ell}}}^T & \mathbf{M}_{C^{\bm{\ell}} C^{\bm{\ell}}}
\end{pmatrix}
\begin{pmatrix}
\bm{\alpha}_A \\
\bm{\alpha}_{C^{\bm{\ell}}}
\end{pmatrix}
=
\begin{pmatrix}
\mathbf{b}_A \\
\mathbf{b}_{C^{\bm{\ell}}}
\end{pmatrix}.
\end{equation}

Rather than solving system~\eqref{eq:block_system} from scratch, we eliminate the active unknowns $\bm{\alpha}_A$ using the Schur complement of $\mathbf{M}_{AA}$. Taking advantage of the precomputed Cholesky factor $\mathbf{L}_{AA}$ from~\eqref{eq:current_cholesky}, we evaluate the intermediate quantities
\begin{equation}
\label{eq:schur_intermediates}
\mathbf{Y}_{AC^{\bm{\ell}}} = \mathbf{L}_{AA}^{-T} \left( \mathbf{L}_{AA}^{-1} \mathbf{M}_{AC^{\bm{\ell}}} \right) \quad \text{and} \quad \widehat{\mathbf{b}}_A = \mathbf{L}_{AA}^{-T} \left( \mathbf{L}_{AA}^{-1} \mathbf{b}_A \right),
\end{equation}
via triangular solves. The reduced system for the candidate surpluses $\bm{\alpha}_C^{\bm{\ell}}$ then simplifies to the Schur complement system
\begin{equation}
\label{eq:schur_system}
\left\{\begin{aligned}
  &\mathbf{S}_{C^{\bm{\ell}} C^{\bm{\ell}}} \bm{\alpha}_{C^{\bm{\ell}}} = \widetilde{\mathbf{b}}_{C^{\bm{\ell}}},\\
  & \mathbf{S}_{C^{\bm{\ell}} C^{\bm{\ell}}} = \mathbf{M}_{C^{\bm{\ell}}C^{\bm{\ell}}} - \mathbf{M}_{A C^{\bm{\ell}}}^T \mathbf{Y}_{AC^{\bm{\ell}}}, \qquad \widetilde{\mathbf{b}}_{C^{\bm{\ell}}} = \mathbf{b}_{C^{\bm{\ell}}} - \mathbf{M}_{A C^{\bm{\ell}}}^T \widehat{\mathbf{b}}_A.
\end{aligned}\right.
\end{equation}
Solving the small, dense system~\eqref{eq:schur_system} directly yields the hierarchical surpluses $\bm{\alpha}_{C^{\bm{\ell}}}$ associated with $W_{\bm{\ell}}$ at a cost drastically lower than solving a full non-adaptive system. 

Once all candidate subspaces $W_{\bm{\ell}}$ of diagonal $\mathcal{D}_k$ have been evaluated, we select the newly admissible indices
\begin{equation}
\label{eq:admissible_update}
\Delta \mathcal{I}_k := \left\{ (\bm{\ell}, \bm{j}) \;\middle|\; \bm{\ell} \in \mathcal{D}_k, \; \bm{j} \in J_{h_{\bm{\ell}}}, \; |(\alpha_{C^{\bm{\ell}}})_{\bm{\ell},\bm{j}}|\,
h_{\ell_1}^{-1}\cdots h_{\ell_d}^{-1} >\epsilon \right\}.
\end{equation}
The active index set and the adaptive approximation space for the next diagonal step $k+1$ are updated according to
\begin{equation}
\label{eq:space_update}
\mathcal{I}_{k+1,\epsilon} = \mathcal{I}_{k,\epsilon} \cup \Delta \mathcal{I}_k, \qquad V_{h,k+1,\epsilon}^{(1)} = \mathrm{span}\left\{ \varphi^p_{h_{\bm{\ell}},\bm{j}} \;\middle|\; (\bm{\ell}, \bm{j}) \in \mathcal{I}_{k+1,\epsilon} \right\}.
\end{equation}
This incremental procedure is repeated until all diagonals up to level $\sigma$ have been explored.

\section{Numerical results}
\label{sec:num}
\subsection{General settings}
We restrict our numerical study to the two-dimensional case. We recall that $h$ denotes the mesh size, $\Delta t$ the time step and $p_q=q$, where $q=1,3$, the spline degree. Given an approximation $u_h^\kappa$ of a function $u(t)$ at time step $\kappa$, the relative $\mathrm{L}^q$-norm of the error is given by
\[
\frac{\|u_h^\kappa - u(t)\|_{\mathrm{L}^q}}{\|u(t)\|_{\mathrm{L}^q}}, 
\]
where the integrals are decomposed onto the finest mesh as
\begin{align*}
\|u(t)\|_{\mathrm{L}^q}^q &=  \sum_{i_1,i_2} \int_{i_1 h}^{(i_1+\frac{1}{2}) h} \int_{i_2 h}^{(i_2+\frac{1}{2}) h} | u(t)|^q \, dx\, dy +  \int_{(i_1+\frac{1}{2}) h}^{(i_1+1) h} \int_{(i_2+\frac{1}{2}) h}^{(i_2+1) h} | u(t)|^q \, dx\, dy,
\end{align*}
and  approximated using Gauss-Legendre quadrature. The number of quadrature points $n_q$ is chosen according to the spline degree $p$ as $n_q = p + 1$. 

For our numerical investigation, we select two representative test cases: a solution constructed using the method of manufactured solutions~\cite{tranquilli22}; and the diocotron instability~\cite{driscoll90}. We compare different PIC methods, for which we introduce shortcut notations: STD and SGCT for the STD- and SGCT-PIC methods; HSG-($\star$) for the HSG-PIC method with the full-grid, $\mathrm{L}^2$-based, $\mathrm{H}^1$-based approximation spaces; and HSG-A$(\sigma,\epsilon)$ for the locally adaptive HSG-PIC method. We may also refer only to $(\star)$ or A-$(\sigma,\epsilon)$ when there is no ambiguity.

\paragraph{Manufactured solutions}
Both the electrons and ions are modeled in the phase space, and the weights of the particles are evolved throughout the simulation. Specifically, the weights of the particles $w_s$ evolve as
\[
\frac{dw_s}{dt}=\frac{S_f(\bm{x}_s(t),\bm{v}_s(t),t)}{\rho^0(\bm{x}_s(0),\bm{v}_s(0))},\quad S_{f_s} = \frac{\partial f^s}{\partial t}+\bm{v}\cdot \bm{\nabla}_{\bm{x}}f^s + \bm{E}\cdot  \bm{\nabla}_{\bm{v}}f^s ,
\]
where $\rho^0$ is the initial charge density, $S_f$ is a forcing term and $s$ denotes the species of the particles, i.e., electrons or ions. An analytical solution can be constructed for this problem. We choose a solution constructed from a distribution of electrons and ions given by
\begin{align*}
f^e(\bm{x},\bm{v})&:= \frac{f^0_{\bm{v}}(\bm{v})}{L^2}\left(1-\sin(\pi t)\sin\left(\frac{2\pi x}{L}\right)\sin\left(\frac{2\pi y}{L}\right)\right), \\
f^i(\bm{x},\bm{v})&:= \frac{f^0_{\bm{v}}(\bm{v})}{L^2},  \quad \text{and}~f^0_{\bm{v}}(\bm{v}):=\frac{4}{\pi}v_1^2v_2^2\exp\left(-v_1^2-v_2^2\right).
\end{align*}
The distributions are defined in the spatial periodic domain $\Omega:=(0,L)^2$, with $L=60$, and the velocity domain $\Omega_{\bm{v}}:=\mathbb{R}^2$.
The charge of electrons and ions are related as $q_i=-q_e=1$.
\paragraph{Diocotron instability}
We consider a radially symmetric Gaussian ring as the initial spatial distribution of electrons, defined by
\begin{align*}
f^0_{\bm{x}}(\bm{x})=\gamma \exp\left(-\frac{(\|\bm{x}-\frac{L}{2}\|_2-\frac{L}{4})^2}{2(\beta L)^2} \right), \quad \text{with}~\gamma ~\text{such that} \int_{\Omega}f^0_{\bm{x}} dxdy =1,
\end{align*}
and $\beta = 0.03$, along with a Maxwellian velocity distribution given by
\begin{align*}
f^0_{\bm{v}}(\bm{v}) = \left(\frac{1}{\sqrt{\pi} v_T}\right)^3 \exp\Big(-\frac{\|\bm{v}\|_2^2}{v_T^2}\Big), \quad v_T = \sqrt{2 T_e q_e / m_e}.
\end{align*}
Here, $\|\cdot\|_2$ denotes the Euclidean norm. The small value of $\beta$ ensures a narrow Gaussian ring with strong spatial variations that are not aligned with the axes, a scenario known to be very challenging for sparse-grid methods.
The domain size is set to $L=60$, and an external uniform magnetic field is applied along the $z$-axis, $\bm{B}_0 = (0,0,B_z)$, with $B_z=15$. The magnetic field is sufficiently strong such that the electron dynamics is dominated by advection in the self-consistent $\bm{E}\times \bm{B}_0$ field~\cite{driscoll90}.  

The magnetic field induces an instability that deforms the initially radially symmetric electron density, forming vortices. Such dynamics are particularly demanding for sparse-grid approximations~\cite{ricketson17,muralikrishnan21,deluzet22,deluzet22-1}, as strong gradients in mixed directions (xy-plane) develop, and the solution exhibits fine-scale, non-aligned structures. The strong magnetic field also imposes a constraint on the time step: it must be smaller than the electron gyroperiod, with cyclotron frequency $\Omega_c = B_z$. We set $\Delta t = 0.02$ to satisfy $\Omega_c \Delta t \leq 1$.

\Fabrice{The Debye length resolution requires $h\leq 1/L$ to aviod grid heating \cite{langdon70}, a condition met in all computations.}

Preliminary numerical experiments, omitted here for brevity, have shown that the method does not exhibit grid heating instability as long as at least few nodes from the hierarchical levels that verify the Debye length constraint are selected.  We ensure this condition in the remaining of the paper. 

\subsection{Convergence and accuracy}
In this section, we aim to evaluate the accuracy of the different approximation spaces and the advantages of the locally adaptive mesh refinement strategy.  We focus on the convergence according to the spatial discretization and thus fix the time discretization to a step $\Delta t =0.02$, small enough so that the total error is dominated by either the grid-based or the statistical error. 

In~\cite{tranquilli22}, the authors proposed assessing the accuracy of PIC methods by examining errors in grid quantities, such as moments of the distribution function or the fields. Following this approach, we restrict our analysis to the approximation error of the charge density.

\subsubsection{Grid-based error}
\label{sec:3.1.1}
We first investigate the grid-based component of the error. Since the total error is typically dominated by the statistical contribution, observing the theoretical convergence rates would require a prohibitively large number of particles. To isolate the grid error, we remove the statistical noise by replacing the particle-based estimator with the exact continuous charge density. Thus, within the scope of~\cref{sec:3.1.1}, the discrete variational formulation~\eqref{eq:5} becomes
\[
\text{Find } \Phi_{h,N}^{(\star)} \in V_{h,0}^{(\star)}(\Omega) \quad \text{such that} \quad 
a(\Phi_{h,N}^{(\star)}, v_h) = (\Pi_{h}^{(\star)} \rho , v_h)_{\mathrm{L}^2(\Omega)}, \quad \forall v_h \in V_{h,0}^{(\star)}(\Omega),
\]
where the right-hand side is evaluated using the exact density rather than the Monte Carlo estimator.

\paragraph{Accuracy of the approximations spaces} 
We first investigate the accuracy provided by the different approximation spaces: the $\mathrm{L}^2$-, $\mathrm{H}^1$-based sparse-grid and the full-grid approximation spaces. We consider the solution constructed with the method of manufactured solution and we analyze the error at time $t=0.5$.

The $\mathrm{L}^2$-norm error of the charge density approximation is plotted on the left panel of \cref{fig:0} as a function of the mesh size for different spline degrees and approximations spaces. The number of mesh nodes, i.e., the dimension of the approximation spaces, is indicated on each curves.

\begin{center}
  \resizebox{\textwidth}{!}{
\begingroup
  \makeatletter
  \providecommand\color[2][]{%
    \GenericError{(gnuplot) \space\space\space\@spaces}{%
      Package color not loaded in conjunction with
      terminal option `colourtext'%
    }{See the gnuplot documentation for explanation.%
    }{Either use 'blacktext' in gnuplot or load the package
      color.sty in LaTeX.}%
    \renewcommand\color[2][]{}%
  }%
  \providecommand\includegraphics[2][]{%
    \GenericError{(gnuplot) \space\space\space\@spaces}{%
      Package graphicx or graphics not loaded%
    }{See the gnuplot documentation for explanation.%
    }{The gnuplot epslatex terminal needs graphicx.sty or graphics.sty.}%
    \renewcommand\includegraphics[2][]{}%
  }%
  \providecommand\rotatebox[2]{#2}%
  \@ifundefined{ifGPcolor}{%
    \newif\ifGPcolor
    \GPcolortrue
  }{}%
  \@ifundefined{ifGPblacktext}{%
    \newif\ifGPblacktext
    \GPblacktexttrue
  }{}%
  \let\gplgaddtomacro\g@addto@macro
  \gdef\gplbacktext{}%
  \gdef\gplfronttext{}%
  \makeatother
  \ifGPblacktext
    \def\colorrgb#1{}%
    \def\colorgray#1{}%
  \else
    \ifGPcolor
      \def\colorrgb#1{\color[rgb]{#1}}%
      \def\colorgray#1{\color[gray]{#1}}%
      \expandafter\def\csname LTw\endcsname{\color{white}}%
      \expandafter\def\csname LTb\endcsname{\color{black}}%
      \expandafter\def\csname LTa\endcsname{\color{black}}%
      \expandafter\def\csname LT0\endcsname{\color[rgb]{1,0,0}}%
      \expandafter\def\csname LT1\endcsname{\color[rgb]{0,1,0}}%
      \expandafter\def\csname LT2\endcsname{\color[rgb]{0,0,1}}%
      \expandafter\def\csname LT3\endcsname{\color[rgb]{1,0,1}}%
      \expandafter\def\csname LT4\endcsname{\color[rgb]{0,1,1}}%
      \expandafter\def\csname LT5\endcsname{\color[rgb]{1,1,0}}%
      \expandafter\def\csname LT6\endcsname{\color[rgb]{0,0,0}}%
      \expandafter\def\csname LT7\endcsname{\color[rgb]{1,0.3,0}}%
      \expandafter\def\csname LT8\endcsname{\color[rgb]{0.5,0.5,0.5}}%
    \else
      \def\colorrgb#1{\color{black}}%
      \def\colorgray#1{\color[gray]{#1}}%
      \expandafter\def\csname LTw\endcsname{\color{white}}%
      \expandafter\def\csname LTb\endcsname{\color{black}}%
      \expandafter\def\csname LTa\endcsname{\color{black}}%
      \expandafter\def\csname LT0\endcsname{\color{black}}%
      \expandafter\def\csname LT1\endcsname{\color{black}}%
      \expandafter\def\csname LT2\endcsname{\color{black}}%
      \expandafter\def\csname LT3\endcsname{\color{black}}%
      \expandafter\def\csname LT4\endcsname{\color{black}}%
      \expandafter\def\csname LT5\endcsname{\color{black}}%
      \expandafter\def\csname LT6\endcsname{\color{black}}%
      \expandafter\def\csname LT7\endcsname{\color{black}}%
      \expandafter\def\csname LT8\endcsname{\color{black}}%
    \fi
  \fi
    \setlength{\unitlength}{0.0500bp}%
    \ifx\gptboxheight\undefined%
      \newlength{\gptboxheight}%
      \newlength{\gptboxwidth}%
      \newsavebox{\gptboxtext}%
    \fi%
    \setlength{\fboxrule}{0.5pt}%
    \setlength{\fboxsep}{1pt}%
    \definecolor{tbcol}{rgb}{1,1,1}%
\begin{picture}(3740.00,3440.00)%
    \gplgaddtomacro\gplbacktext{%
      \csname LTb\endcsname
      \put(689,807){\makebox(0,0)[r]{\strut{}\small$10^{-10}$}}%
      \csname LTb\endcsname
      \put(689,1303){\makebox(0,0)[r]{\strut{}\small$10^{-8}$}}%
      \csname LTb\endcsname
      \put(689,1799){\makebox(0,0)[r]{\strut{}\small$10^{-6}$}}%
      \csname LTb\endcsname
      \put(689,2295){\makebox(0,0)[r]{\strut{}\small$10^{-4}$}}%
      \csname LTb\endcsname
      \put(689,2790){\makebox(0,0)[r]{\strut{}\small$10^{-2}$}}%
      \csname LTb\endcsname
      \put(689,3286){\makebox(0,0)[r]{\strut{}\small$10^{0}$}}%
      \csname LTb\endcsname
      \put(1298,426){\makebox(0,0){\strut{}$10^{-2}$}}%
      \csname LTb\endcsname
      \put(2064,426){\makebox(0,0){\strut{}$10^{-1}$}}%
      \csname LTb\endcsname
      \put(2829,426){\makebox(0,0){\strut{}$\small 10^{0}$}}%
    }%
    \gplgaddtomacro\gplfronttext{%
      \csname LTb\endcsname
      \put(2464,2033){\makebox(0,0)[r]{\strut{}\scriptsize$h^{2}$}}%
      \csname LTb\endcsname
      \put(2464,1913){\makebox(0,0)[r]{\strut{}\scriptsize$h^{4}$}}%
      \csname LTb\endcsname
      \put(2464,1794){\makebox(0,0)[r]{\strut{}\scriptsize$h^{6}$}}%
      \csname LTb\endcsname
      \put(2464,1674){\makebox(0,0)[r]{\strut{}\scriptsize$(\mathrm{E}),p_1$}}%
      \csname LTb\endcsname
      \put(2464,1554){\makebox(0,0)[r]{\strut{}\scriptsize$(1),p_1$}}%
      \csname LTb\endcsname
      \put(2464,1434){\makebox(0,0)[r]{\strut{}\scriptsize$(\infty),p_1$}}%
      \colorrgb{0.40,0.70,1.00}
      \put(1908,3299){\makebox(0,0){\strut{}\scalebox{0.9}{\tiny \color[HTML]{66b3ff}  36}}}%
      \colorrgb{0.40,0.70,1.00}
      \put(1677,3108){\makebox(0,0){\strut{}\scalebox{0.9}{\tiny \color[HTML]{66b3ff}  80}}}%
      \colorrgb{0.40,0.70,1.00}
      \put(1447,3108){\makebox(0,0){\strut{}\scalebox{0.9}{\tiny \color[HTML]{66b3ff}  160}}}%
      \colorrgb{0.40,0.70,1.00}
      \put(1216,2946){\makebox(0,0){\strut{}\scalebox{0.9}{\tiny \color[HTML]{66b3ff}  336}}}%
      \colorrgb{0.40,0.70,1.00}
      \put(986,2795){\makebox(0,0){\strut{}\scalebox{0.9}{\tiny \color[HTML]{66b3ff}  704}}}%
      \colorrgb{0.00,0.35,1.00}
      \put(2138,3299){\makebox(0,0){\strut{}\scalebox{0.9}{\tiny \color[HTML]{0059ff}  20}}}%
      \colorrgb{0.00,0.35,1.00}
      \put(1908,3108){\makebox(0,0){\strut{}\scalebox{0.9}{\tiny \color[HTML]{0059ff}  48}}}%
      \colorrgb{0.00,0.35,1.00}
      \put(1677,2946){\makebox(0,0){\strut{}\scalebox{0.9}{\tiny \color[HTML]{0059ff}  112}}}%
      \colorrgb{0.00,0.35,1.00}
      \put(1447,2795){\makebox(0,0){\strut{}\scalebox{0.9}{\tiny \color[HTML]{0059ff}  256}}}%
      \colorrgb{0.00,0.35,1.00}
      \put(1216,2647){\makebox(0,0){\strut{}\scalebox{0.9}{\tiny \color[HTML]{0059ff}  576}}}%
      \colorrgb{0.00,0.35,1.00}
      \put(986,2500){\makebox(0,0){\strut{}\scalebox{0.9}{\tiny \color[HTML]{0059ff}  1280}}}%
      \colorrgb{0.00,0.17,0.50}
      \put(2599,3299){\makebox(0,0){\strut{}\scalebox{0.9}{\tiny \color[HTML]{002b7f}  3}}}%
      \colorrgb{0.00,0.17,0.50}
      \put(2368,3106){\makebox(0,0){\strut{}\scalebox{0.9}{\tiny \color[HTML]{002b7f}  16}}}%
      \colorrgb{0.00,0.17,0.50}
      \put(2138,2935){\makebox(0,0){\strut{}\scalebox{0.9}{\tiny \color[HTML]{002b7f}  64}}}%
      \colorrgb{0.00,0.17,0.50}
      \put(1908,2780){\makebox(0,0){\strut{}\scalebox{0.9}{\tiny \color[HTML]{002b7f}  256}}}%
      \colorrgb{0.00,0.17,0.50}
      \put(1677,2629){\makebox(0,0){\strut{}\scalebox{0.9}{\tiny \color[HTML]{002b7f}  1024}}}%
      \colorrgb{0.00,0.17,0.50}
      \put(1447,2479){\makebox(0,0){\strut{}\scalebox{0.9}{\tiny \color[HTML]{002b7f}  4096}}}%
      \colorrgb{0.00,0.17,0.50}
      \put(1216,2330){\makebox(0,0){\strut{}\scalebox{0.9}{\tiny \color[HTML]{002b7f}  16384}}}%
      \csname LTb\endcsname
      \put(2464,1314){\makebox(0,0)[r]{\strut{}\scriptsize$(\mathrm{E}),p_3$}}%
      \csname LTb\endcsname
      \put(2464,1194){\makebox(0,0)[r]{\strut{}\scriptsize$(1),p_3$}}%
      \csname LTb\endcsname
      \put(2464,1074){\makebox(0,0)[r]{\strut{}\scriptsize$(\infty),p_3$}}%
      \colorrgb{0.00,0.64,0.24}
      \put(1677,2376){\makebox(0,0){\strut{}\scalebox{0.9}{\tiny \color[HTML]{00a33c}  112}}}%
      \colorrgb{0.00,0.64,0.24}
      \put(1447,2057){\makebox(0,0){\strut{}\scalebox{0.9}{\tiny \color[HTML]{00a33c}  256}}}%
      \colorrgb{0.00,0.64,0.24}
      \put(1216,1753){\makebox(0,0){\strut{}\scalebox{0.9}{\tiny \color[HTML]{00a33c}  576}}}%
      \colorrgb{0.00,0.64,0.24}
      \put(986,1453){\makebox(0,0){\strut{}\scalebox{0.9}{\tiny \color[HTML]{00a33c}  1280}}}%
      \colorrgb{0.40,1.00,0.60}
      \put(1908,3299){\makebox(0,0){\strut{}\scalebox{0.9}{\tiny \color[HTML]{66ff99}  36}}}%
      \colorrgb{0.40,1.00,0.60}
      \put(1677,2865){\makebox(0,0){\strut{}\scalebox{0.9}{\tiny \color[HTML]{66ff99}  80}}}%
      \colorrgb{0.40,1.00,0.60}
      \put(1447,2865){\makebox(0,0){\strut{}\scalebox{0.9}{\tiny \color[HTML]{66ff99}  160}}}%
      \colorrgb{0.40,1.00,0.60}
      \put(1216,2509){\makebox(0,0){\strut{}\scalebox{0.9}{\tiny \color[HTML]{66ff99}  336}}}%
      \colorrgb{0.40,1.00,0.60}
      \put(986,2191){\makebox(0,0){\strut{}\scalebox{0.9}{\tiny \color[HTML]{66ff99}  704}}}%
      \colorrgb{0.00,0.30,0.10}
      \put(2599,3299){\makebox(0,0){\strut{}\scalebox{0.9}{\tiny \color[HTML]{004d1a}  3}}}%
      \colorrgb{0.00,0.30,0.10}
      \put(2368,2865){\makebox(0,0){\strut{}\scalebox{0.9}{\tiny \color[HTML]{004d1a}  16}}}%
      \colorrgb{0.00,0.30,0.10}
      \put(2138,2506){\makebox(0,0){\strut{}\scalebox{0.9}{\tiny \color[HTML]{004d1a}  64}}}%
      \colorrgb{0.00,0.30,0.10}
      \put(1908,2188){\makebox(0,0){\strut{}\scalebox{0.9}{\tiny \color[HTML]{004d1a}  256}}}%
      \colorrgb{0.00,0.30,0.10}
      \put(1677,1884){\makebox(0,0){\strut{}\scalebox{0.9}{\tiny \color[HTML]{004d1a}  1024}}}%
      \colorrgb{0.00,0.30,0.10}
      \put(1447,1584){\makebox(0,0){\strut{}\scalebox{0.9}{\tiny \color[HTML]{004d1a}  4096}}}%
      \colorrgb{0.00,0.30,0.10}
      \put(1216,1285){\makebox(0,0){\strut{}\scalebox{0.9}{\tiny \color[HTML]{004d1a}  16384}}}%
      \csname LTb\endcsname
      \put(2464,954){\makebox(0,0)[r]{\strut{}\scriptsize$(\mathrm{E}),p_5$}}%
      \csname LTb\endcsname
      \put(2464,834){\makebox(0,0)[r]{\strut{}\scriptsize$(1),p_5$}}%
      \csname LTb\endcsname
      \put(2464,714){\makebox(0,0)[r]{\strut{}\scriptsize$(\infty),p_5$}}%
      \colorrgb{0.40,0.00,0.00}
      \put(2599,3299){\makebox(0,0){\strut{}\scalebox{0.9}{\tiny \color[HTML]{660000}  3}}}%
      \colorrgb{0.40,0.00,0.00}
      \put(2368,2627){\makebox(0,0){\strut{}\scalebox{0.9}{\tiny \color[HTML]{660000}  16}}}%
      \colorrgb{0.40,0.00,0.00}
      \put(2138,2082){\makebox(0,0){\strut{}\scalebox{0.9}{\tiny \color[HTML]{660000}  64}}}%
      \colorrgb{0.40,0.00,0.00}
      \put(1908,1598){\makebox(0,0){\strut{}\scalebox{0.9}{\tiny \color[HTML]{660000}  256}}}%
      \colorrgb{0.40,0.00,0.00}
      \put(1677,1198){\makebox(0,0){\strut{}\scalebox{0.9}{\tiny \color[HTML]{660000}  1024}}}%
      \colorrgb{1.00,0.00,0.00}
      \put(2138,3299){\makebox(0,0){\strut{}\scalebox{0.9}{\tiny \color[HTML]{ff0000}  20}}}%
      \colorrgb{1.00,0.00,0.00}
      \put(1908,2627){\makebox(0,0){\strut{}\scalebox{0.9}{\tiny \color[HTML]{ff0000}  48}}}%
      \colorrgb{1.00,0.00,0.00}
      \put(1677,2084){\makebox(0,0){\strut{}\scalebox{0.9}{\tiny \color[HTML]{ff0000}  112}}}%
      \colorrgb{1.00,0.00,0.00}
      \put(1447,1599){\makebox(0,0){\strut{}\scalebox{0.9}{\tiny \color[HTML]{ff0000}  256}}}%
      \colorrgb{1.00,0.00,0.00}
      \put(1216,1141){\makebox(0,0){\strut{}\scalebox{0.9}{\tiny \color[HTML]{ff0000}  576}}}%
      \colorrgb{1.00,0.60,0.60}
      \put(1908,3299){\makebox(0,0){\strut{}\scalebox{0.9}{\tiny \color[HTML]{ff9999}  36}}}%
      \colorrgb{1.00,0.60,0.60}
      \put(1677,2627){\makebox(0,0){\strut{}\scalebox{0.9}{\tiny \color[HTML]{ff9999}  80}}}%
      \colorrgb{1.00,0.60,0.60}
      \put(1447,2627){\makebox(0,0){\strut{}\scalebox{0.9}{\tiny \color[HTML]{ff9999}  160}}}%
      \colorrgb{1.00,0.60,0.60}
      \put(1216,2084){\makebox(0,0){\strut{}\scalebox{0.9}{\tiny \color[HTML]{ff9999}  336}}}%
      \colorrgb{1.00,0.60,0.60}
      \put(986,1599){\makebox(0,0){\strut{}\scalebox{0.9}{\tiny \color[HTML]{ff9999}  704}}}%
      \csname LTb\endcsname
      \put(48,1923){\rotatebox{-270.00}{\makebox(0,0){\strut{}$\|\Pi_{h}^{(\star)} \rho(t) - \rho(t))\|_{\mathrm{L}^2(\Omega)}$}}}%
      \csname LTb\endcsname
      \put(1796,93){\makebox(0,0){\strut{}Mesh size $h$}}%
    }%
    \gplbacktext
    \put(0,0){\includegraphics[width={187.00bp},height={172.00bp}]{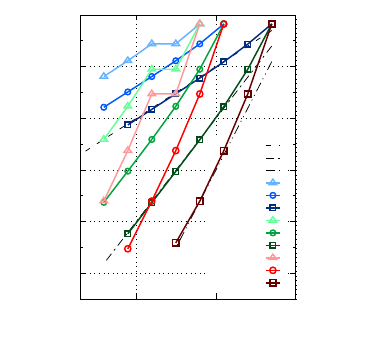}}%
    \gplfronttext
  \end{picture}%
\endgroup
 
\begingroup
  \makeatletter
  \providecommand\color[2][]{%
    \GenericError{(gnuplot) \space\space\space\@spaces}{%
      Package color not loaded in conjunction with
      terminal option `colourtext'%
    }{See the gnuplot documentation for explanation.%
    }{Either use 'blacktext' in gnuplot or load the package
      color.sty in LaTeX.}%
    \renewcommand\color[2][]{}%
  }%
  \providecommand\includegraphics[2][]{%
    \GenericError{(gnuplot) \space\space\space\@spaces}{%
      Package graphicx or graphics not loaded%
    }{See the gnuplot documentation for explanation.%
    }{The gnuplot epslatex terminal needs graphicx.sty or graphics.sty.}%
    \renewcommand\includegraphics[2][]{}%
  }%
  \providecommand\rotatebox[2]{#2}%
  \@ifundefined{ifGPcolor}{%
    \newif\ifGPcolor
    \GPcolortrue
  }{}%
  \@ifundefined{ifGPblacktext}{%
    \newif\ifGPblacktext
    \GPblacktexttrue
  }{}%
  \let\gplgaddtomacro\g@addto@macro
  \gdef\gplbacktext{}%
  \gdef\gplfronttext{}%
  \makeatother
  \ifGPblacktext
    \def\colorrgb#1{}%
    \def\colorgray#1{}%
  \else
    \ifGPcolor
      \def\colorrgb#1{\color[rgb]{#1}}%
      \def\colorgray#1{\color[gray]{#1}}%
      \expandafter\def\csname LTw\endcsname{\color{white}}%
      \expandafter\def\csname LTb\endcsname{\color{black}}%
      \expandafter\def\csname LTa\endcsname{\color{black}}%
      \expandafter\def\csname LT0\endcsname{\color[rgb]{1,0,0}}%
      \expandafter\def\csname LT1\endcsname{\color[rgb]{0,1,0}}%
      \expandafter\def\csname LT2\endcsname{\color[rgb]{0,0,1}}%
      \expandafter\def\csname LT3\endcsname{\color[rgb]{1,0,1}}%
      \expandafter\def\csname LT4\endcsname{\color[rgb]{0,1,1}}%
      \expandafter\def\csname LT5\endcsname{\color[rgb]{1,1,0}}%
      \expandafter\def\csname LT6\endcsname{\color[rgb]{0,0,0}}%
      \expandafter\def\csname LT7\endcsname{\color[rgb]{1,0.3,0}}%
      \expandafter\def\csname LT8\endcsname{\color[rgb]{0.5,0.5,0.5}}%
    \else
      \def\colorrgb#1{\color{black}}%
      \def\colorgray#1{\color[gray]{#1}}%
      \expandafter\def\csname LTw\endcsname{\color{white}}%
      \expandafter\def\csname LTb\endcsname{\color{black}}%
      \expandafter\def\csname LTa\endcsname{\color{black}}%
      \expandafter\def\csname LT0\endcsname{\color{black}}%
      \expandafter\def\csname LT1\endcsname{\color{black}}%
      \expandafter\def\csname LT2\endcsname{\color{black}}%
      \expandafter\def\csname LT3\endcsname{\color{black}}%
      \expandafter\def\csname LT4\endcsname{\color{black}}%
      \expandafter\def\csname LT5\endcsname{\color{black}}%
      \expandafter\def\csname LT6\endcsname{\color{black}}%
      \expandafter\def\csname LT7\endcsname{\color{black}}%
      \expandafter\def\csname LT8\endcsname{\color{black}}%
    \fi
  \fi
    \setlength{\unitlength}{0.0500bp}%
    \ifx\gptboxheight\undefined%
      \newlength{\gptboxheight}%
      \newlength{\gptboxwidth}%
      \newsavebox{\gptboxtext}%
    \fi%
    \setlength{\fboxrule}{0.5pt}%
    \setlength{\fboxsep}{1pt}%
    \definecolor{tbcol}{rgb}{1,1,1}%
\begin{picture}(3740.00,3440.00)%
    \gplgaddtomacro\gplbacktext{%
      \csname LTb\endcsname
      \put(689,559){\makebox(0,0)[r]{\strut{}\small$10^{-10}$}}%
      \csname LTb\endcsname
      \put(689,1105){\makebox(0,0)[r]{\strut{}\small$10^{-8}$}}%
      \csname LTb\endcsname
      \put(689,1650){\makebox(0,0)[r]{\strut{}\small$10^{-6}$}}%
      \csname LTb\endcsname
      \put(689,2195){\makebox(0,0)[r]{\strut{}\small$10^{-4}$}}%
      \csname LTb\endcsname
      \put(689,2741){\makebox(0,0)[r]{\strut{}\small$10^{-2}$}}%
      \csname LTb\endcsname
      \put(689,3286){\makebox(0,0)[r]{\strut{}\small$10^{0}$}}%
      \csname LTb\endcsname
      \put(1298,426){\makebox(0,0){\strut{}$10^{-2}$}}%
      \csname LTb\endcsname
      \put(2064,426){\makebox(0,0){\strut{}$10^{-1}$}}%
      \csname LTb\endcsname
      \put(2829,426){\makebox(0,0){\strut{}$\small 10^{0}$}}%
    }%
    \gplgaddtomacro\gplfronttext{%
      \csname LTb\endcsname
      \put(2464,2033){\makebox(0,0)[r]{\strut{}\scriptsize $h$}}%
      \csname LTb\endcsname
      \put(2464,1913){\makebox(0,0)[r]{\strut{}\scriptsize $h^3$}}%
      \csname LTb\endcsname
      \put(2464,1794){\makebox(0,0)[r]{\strut{}\scriptsize$h^{5}$}}%
      \csname LTb\endcsname
      \put(2464,1674){\makebox(0,0)[r]{\strut{}\scriptsize$(\mathrm{E}),p_1$}}%
      \csname LTb\endcsname
      \put(2464,1554){\makebox(0,0)[r]{\strut{}\scriptsize$(1),p_1$}}%
      \csname LTb\endcsname
      \put(2464,1434){\makebox(0,0)[r]{\strut{}\scriptsize$(\infty),p_1$}}%
      \colorrgb{0.40,0.70,1.00}
      \put(1908,3345){\makebox(0,0){\strut{}\scalebox{0.9}{\tiny \color[HTML]{66b3ff}  36}}}%
      \colorrgb{0.40,0.70,1.00}
      \put(1677,3250){\makebox(0,0){\strut{}\scalebox{0.9}{\tiny \color[HTML]{66b3ff}  80}}}%
      \colorrgb{0.40,0.70,1.00}
      \put(1447,3250){\makebox(0,0){\strut{}\scalebox{0.9}{\tiny \color[HTML]{66b3ff}  160}}}%
      \colorrgb{0.40,0.70,1.00}
      \put(1216,3171){\makebox(0,0){\strut{}\scalebox{0.9}{\tiny \color[HTML]{66b3ff}  336}}}%
      \colorrgb{0.40,0.70,1.00}
      \put(986,3090){\makebox(0,0){\strut{}\scalebox{0.9}{\tiny \color[HTML]{66b3ff}  704}}}%
      \colorrgb{0.00,0.35,1.00}
      \put(2138,3345){\makebox(0,0){\strut{}\scalebox{0.9}{\tiny \color[HTML]{0059ff}  20}}}%
      \colorrgb{0.00,0.35,1.00}
      \put(1908,3250){\makebox(0,0){\strut{}\scalebox{0.9}{\tiny \color[HTML]{0059ff}  48}}}%
      \colorrgb{0.00,0.35,1.00}
      \put(1677,3171){\makebox(0,0){\strut{}\scalebox{0.9}{\tiny \color[HTML]{0059ff}  112}}}%
      \colorrgb{0.00,0.35,1.00}
      \put(1447,3090){\makebox(0,0){\strut{}\scalebox{0.9}{\tiny \color[HTML]{0059ff}  256}}}%
      \colorrgb{0.00,0.35,1.00}
      \put(1216,3008){\makebox(0,0){\strut{}\scalebox{0.9}{\tiny \color[HTML]{0059ff}  576}}}%
      \colorrgb{0.00,0.35,1.00}
      \put(986,2926){\makebox(0,0){\strut{}\scalebox{0.9}{\tiny \color[HTML]{0059ff}  1280}}}%
      \colorrgb{0.00,0.17,0.50}
      \put(2599,3345){\makebox(0,0){\strut{}\scalebox{0.9}{\tiny \color[HTML]{002b7f}  3}}}%
      \colorrgb{0.00,0.17,0.50}
      \put(2368,3250){\makebox(0,0){\strut{}\scalebox{0.9}{\tiny \color[HTML]{002b7f}  16}}}%
      \colorrgb{0.00,0.17,0.50}
      \put(2138,3169){\makebox(0,0){\strut{}\scalebox{0.9}{\tiny \color[HTML]{002b7f}  64}}}%
      \colorrgb{0.00,0.17,0.50}
      \put(1908,3087){\makebox(0,0){\strut{}\scalebox{0.9}{\tiny \color[HTML]{002b7f}  256}}}%
      \colorrgb{0.00,0.17,0.50}
      \put(1677,3005){\makebox(0,0){\strut{}\scalebox{0.9}{\tiny \color[HTML]{002b7f}  1024}}}%
      \colorrgb{0.00,0.17,0.50}
      \put(1447,2923){\makebox(0,0){\strut{}\scalebox{0.9}{\tiny \color[HTML]{002b7f}  4096}}}%
      \colorrgb{0.00,0.17,0.50}
      \put(1216,2841){\makebox(0,0){\strut{}\scalebox{0.9}{\tiny \color[HTML]{002b7f}  16384}}}%
      \csname LTb\endcsname
      \put(2464,1314){\makebox(0,0)[r]{\strut{}\scriptsize$(\mathrm{E}),p_3$}}%
      \csname LTb\endcsname
      \put(2464,1194){\makebox(0,0)[r]{\strut{}\scriptsize$(1),p_3$}}%
      \csname LTb\endcsname
      \put(2464,1074){\makebox(0,0)[r]{\strut{}\scriptsize$(\infty),p_3$}}%
      \colorrgb{0.00,0.64,0.24}
      \put(1677,2535){\makebox(0,0){\strut{}\scalebox{0.9}{\tiny \color[HTML]{00a33c}  112}}}%
      \colorrgb{0.00,0.64,0.24}
      \put(1447,2275){\makebox(0,0){\strut{}\scalebox{0.9}{\tiny \color[HTML]{00a33c}  256}}}%
      \colorrgb{0.00,0.64,0.24}
      \put(1216,2025){\makebox(0,0){\strut{}\scalebox{0.9}{\tiny \color[HTML]{00a33c}  576}}}%
      \colorrgb{0.00,0.64,0.24}
      \put(986,1777){\makebox(0,0){\strut{}\scalebox{0.9}{\tiny \color[HTML]{00a33c}  1280}}}%
      \colorrgb{0.40,1.00,0.60}
      \put(2138,3345){\makebox(0,0){\strut{}\scalebox{0.9}{\tiny \color[HTML]{66ff99}  16}}}%
      \colorrgb{0.40,1.00,0.60}
      \put(1908,3345){\makebox(0,0){\strut{}\scalebox{0.9}{\tiny \color[HTML]{66ff99}  36}}}%
      \colorrgb{0.40,1.00,0.60}
      \put(1677,2964){\makebox(0,0){\strut{}\scalebox{0.9}{\tiny \color[HTML]{66ff99}  80}}}%
      \colorrgb{0.40,1.00,0.60}
      \put(1447,2964){\makebox(0,0){\strut{}\scalebox{0.9}{\tiny \color[HTML]{66ff99}  160}}}%
      \colorrgb{0.40,1.00,0.60}
      \put(1216,2669){\makebox(0,0){\strut{}\scalebox{0.9}{\tiny \color[HTML]{66ff99}  336}}}%
      \colorrgb{0.40,1.00,0.60}
      \put(986,2408){\makebox(0,0){\strut{}\scalebox{0.9}{\tiny \color[HTML]{66ff99}  704}}}%
      \colorrgb{0.00,0.30,0.10}
      \put(2599,3345){\makebox(0,0){\strut{}\scalebox{0.9}{\tiny \color[HTML]{004d1a}  3}}}%
      \colorrgb{0.00,0.30,0.10}
      \put(2368,2964){\makebox(0,0){\strut{}\scalebox{0.9}{\tiny \color[HTML]{004d1a}  16}}}%
      \colorrgb{0.00,0.30,0.10}
      \put(2138,2667){\makebox(0,0){\strut{}\scalebox{0.9}{\tiny \color[HTML]{004d1a}  64}}}%
      \colorrgb{0.00,0.30,0.10}
      \put(1908,2407){\makebox(0,0){\strut{}\scalebox{0.9}{\tiny \color[HTML]{004d1a}  256}}}%
      \colorrgb{0.00,0.30,0.10}
      \put(1677,2157){\makebox(0,0){\strut{}\scalebox{0.9}{\tiny \color[HTML]{004d1a}  1024}}}%
      \colorrgb{0.00,0.30,0.10}
      \put(1447,1910){\makebox(0,0){\strut{}\scalebox{0.9}{\tiny \color[HTML]{004d1a}  4096}}}%
      \colorrgb{0.00,0.30,0.10}
      \put(1216,1663){\makebox(0,0){\strut{}\scalebox{0.9}{\tiny \color[HTML]{004d1a}  16384}}}%
      \csname LTb\endcsname
      \put(2464,954){\makebox(0,0)[r]{\strut{}\scriptsize$(\mathrm{E}),p_5$}}%
      \csname LTb\endcsname
      \put(2464,834){\makebox(0,0)[r]{\strut{}\scriptsize$(1),p_5$}}%
      \csname LTb\endcsname
      \put(2464,714){\makebox(0,0)[r]{\strut{}\scriptsize$(\infty),p_5$}}%
      \colorrgb{0.40,0.00,0.00}
      \put(2599,3345){\makebox(0,0){\strut{}\scalebox{0.9}{\tiny \color[HTML]{660000}  3}}}%
      \colorrgb{0.40,0.00,0.00}
      \put(2368,2701){\makebox(0,0){\strut{}\scalebox{0.9}{\tiny \color[HTML]{660000}  16}}}%
      \colorrgb{0.40,0.00,0.00}
      \put(2138,2199){\makebox(0,0){\strut{}\scalebox{0.9}{\tiny \color[HTML]{660000}  64}}}%
      \colorrgb{0.40,0.00,0.00}
      \put(1908,1757){\makebox(0,0){\strut{}\scalebox{0.9}{\tiny \color[HTML]{660000}  256}}}%
      \colorrgb{0.40,0.00,0.00}
      \put(1677,1337){\makebox(0,0){\strut{}\scalebox{0.9}{\tiny \color[HTML]{660000}  1024}}}%
      \colorrgb{0.40,0.00,0.00}
      \put(1447,937){\makebox(0,0){\strut{}\scalebox{0.9}{\tiny \color[HTML]{660000}  4096}}}%
      \colorrgb{1.00,0.00,0.00}
      \put(2138,3345){\makebox(0,0){\strut{}\scalebox{0.9}{\tiny \color[HTML]{ff0000}  20}}}%
      \colorrgb{1.00,0.00,0.00}
      \put(1908,2701){\makebox(0,0){\strut{}\scalebox{0.9}{\tiny \color[HTML]{ff0000}  48}}}%
      \colorrgb{1.00,0.00,0.00}
      \put(1677,2199){\makebox(0,0){\strut{}\scalebox{0.9}{\tiny \color[HTML]{ff0000}  112}}}%
      \colorrgb{1.00,0.00,0.00}
      \put(1447,1757){\makebox(0,0){\strut{}\scalebox{0.9}{\tiny \color[HTML]{ff0000}  256}}}%
      \colorrgb{1.00,0.00,0.00}
      \put(1216,1337){\makebox(0,0){\strut{}\scalebox{0.9}{\tiny \color[HTML]{ff0000}  576}}}%
      \colorrgb{1.00,0.60,0.60}
      \put(1908,3345){\makebox(0,0){\strut{}\scalebox{0.9}{\tiny \color[HTML]{ff9999}  36}}}%
      \colorrgb{1.00,0.60,0.60}
      \put(1677,2701){\makebox(0,0){\strut{}\scalebox{0.9}{\tiny \color[HTML]{ff9999}  80}}}%
      \colorrgb{1.00,0.60,0.60}
      \put(1447,2701){\makebox(0,0){\strut{}\scalebox{0.9}{\tiny \color[HTML]{ff9999}  160}}}%
      \colorrgb{1.00,0.60,0.60}
      \put(1216,2199){\makebox(0,0){\strut{}\scalebox{0.9}{\tiny \color[HTML]{ff9999}  336}}}%
      \colorrgb{1.00,0.60,0.60}
      \put(986,1757){\makebox(0,0){\strut{}\scalebox{0.9}{\tiny \color[HTML]{ff9999}  704}}}%
      \csname LTb\endcsname
      \put(48,1923){\rotatebox{-270.00}{\makebox(0,0){\strut{}$|\Phi_{h}^{(\star)}(t)- \Phi(t))|_{\mathrm{H}^1(\Omega)}$}}}%
      \csname LTb\endcsname
      \put(1796,93){\makebox(0,0){\strut{}Mesh size $h$}}%
    }%
    \gplbacktext
    \put(0,0){\includegraphics[width={187.00bp},height={172.00bp}]{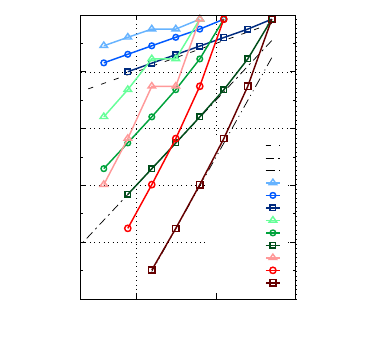}}%
    \gplfronttext
  \end{picture}%
\endgroup
}
 \captionof{figure}{$\mathrm{L}^2$-norm and $\mathrm{H}^1$-semi-norm approximation error of the charge density (left) and electric field (right) at time $t=0.5$ for the manufactured solutions as functions of the mesh size, for different spline degrees.}
\label{fig:0}
\end{center}

For the three approximation spaces, the charge density projection error scales as $\mathcal{O}(h^{p+1})$. These results are consistent with the theoretical bounds of~\cite[Theorem 3.2]{deluzet26} for the full-grid approximation space and better than the estimates predicted for the sparse-grid space as the convergence order does not include the additional logarithmic term. This super-convergence behavior results from the high regularity of the solution in the mixed derivatives. However, we observe a significant difference in the constants between the approximation spaces, which results in a consequent loss of accuracy for the $\mathrm{L}^2$- and $\mathrm{H}^1$-based sparse-grid spaces compared to the full-grid space; a deterioration that can be mitigated by increasing the degree of the B-splines functions.

The $\mathrm{H}^1$-semi-norm error of the electric potential is plotted on the right panel of~\cref{fig:1} as a function of the mesh size $h$ for different spline degrees $p$. 
For both approximation spaces, the error converges as $\mathcal{O}(h^{p})$, which is consistent with the error bounds obtained in~\cite[Theorem 3.2]{deluzet26}. Similarly to the charge density, increasing the spline degree yields a higher convergence order for all approximation spaces. \cle{However, the $\mathrm{H}^1$-based space requires more degrees of freedom than the $\mathrm{L}^2$-based space to achieve a given $\mathrm{H}^1$-error. At first glance, this may appear to contradict the $\mathrm{H}^1$-optimality of the $\mathrm{H}^1$-based space. The key point is that this optimality is an asymptotic property. Indeed, although all three spaces exhibit the same asymptotic convergence rate, $\mathcal{O}(h^2)$, the number of mesh nodes grows significantly more slowly for the $\mathrm{L}^2$-based space, and even more so for the energy-based space, than for the full-grid space.}

\paragraph{Adaptive mesh refinement strategy}
We now assess the benefits of the locally adaptive mesh refinement strategy on the grid-based error. We first consider the manufactured solution benchmark, whose smooth solution (a sine perturbation) is sufficiently regular for sparse-grid PIC methods to outperform the standard PIC method even without mesh refinement.

The $\mathrm{L}^2$-norm error of the projected charge density at time $t=0.5$ is shown as a function of the mesh size on the left panel of~\cref{fig:1}. We consider linear and cubic B-splines, admissibility tolerances $\epsilon_0=0$ and $\epsilon_1=Ch^{p+1}$, where $C=\mathcal{O}(1)$, and fix the generalized-space parameter to $\sigma_0=0$. The adaptive mesh refinement strategy is applied only for the $\epsilon_1$ configurations. Since no additional subspaces are introduced in this case, the resulting approximation space is strictly smaller than the $\mathrm{L}^2$-based sparse-grid space. For each method and mesh size, the corresponding number of mesh nodes is also reported.

All methods exhibit the expected convergence rate of $\mathcal{O}(h^{p+1})$, although the sparse-grid approximations have a larger error constant. The adaptive mesh refinement strategy substantially improves the efficiency of HSG-$(1)$ by significantly reducing the number of mesh nodes while preserving the approximation accuracy. In particular, the same error is achieved with less than half as many mesh nodes. Moreover, for fine meshes, HSG-A$(\sigma_0,\epsilon_1)$ attains essentially the same error as HSG-$(\infty)$ while requiring approximately seven times fewer mesh nodes.

\begin{center}
\begin{center}
  \resizebox{0.9\textwidth}{!}{
\begingroup
  \makeatletter
  \providecommand\color[2][]{%
    \GenericError{(gnuplot) \space\space\space\@spaces}{%
      Package color not loaded in conjunction with
      terminal option `colourtext'%
    }{See the gnuplot documentation for explanation.%
    }{Either use 'blacktext' in gnuplot or load the package
      color.sty in LaTeX.}%
    \renewcommand\color[2][]{}%
  }%
  \providecommand\includegraphics[2][]{%
    \GenericError{(gnuplot) \space\space\space\@spaces}{%
      Package graphicx or graphics not loaded%
    }{See the gnuplot documentation for explanation.%
    }{The gnuplot epslatex terminal needs graphicx.sty or graphics.sty.}%
    \renewcommand\includegraphics[2][]{}%
  }%
  \providecommand\rotatebox[2]{#2}%
  \@ifundefined{ifGPcolor}{%
    \newif\ifGPcolor
    \GPcolortrue
  }{}%
  \@ifundefined{ifGPblacktext}{%
    \newif\ifGPblacktext
    \GPblacktexttrue
  }{}%
  \let\gplgaddtomacro\g@addto@macro
  \gdef\gplbacktext{}%
  \gdef\gplfronttext{}%
  \makeatother
  \ifGPblacktext
    \def\colorrgb#1{}%
    \def\colorgray#1{}%
  \else
    \ifGPcolor
      \def\colorrgb#1{\color[rgb]{#1}}%
      \def\colorgray#1{\color[gray]{#1}}%
      \expandafter\def\csname LTw\endcsname{\color{white}}%
      \expandafter\def\csname LTb\endcsname{\color{black}}%
      \expandafter\def\csname LTa\endcsname{\color{black}}%
      \expandafter\def\csname LT0\endcsname{\color[rgb]{1,0,0}}%
      \expandafter\def\csname LT1\endcsname{\color[rgb]{0,1,0}}%
      \expandafter\def\csname LT2\endcsname{\color[rgb]{0,0,1}}%
      \expandafter\def\csname LT3\endcsname{\color[rgb]{1,0,1}}%
      \expandafter\def\csname LT4\endcsname{\color[rgb]{0,1,1}}%
      \expandafter\def\csname LT5\endcsname{\color[rgb]{1,1,0}}%
      \expandafter\def\csname LT6\endcsname{\color[rgb]{0,0,0}}%
      \expandafter\def\csname LT7\endcsname{\color[rgb]{1,0.3,0}}%
      \expandafter\def\csname LT8\endcsname{\color[rgb]{0.5,0.5,0.5}}%
    \else
      \def\colorrgb#1{\color{black}}%
      \def\colorgray#1{\color[gray]{#1}}%
      \expandafter\def\csname LTw\endcsname{\color{white}}%
      \expandafter\def\csname LTb\endcsname{\color{black}}%
      \expandafter\def\csname LTa\endcsname{\color{black}}%
      \expandafter\def\csname LT0\endcsname{\color{black}}%
      \expandafter\def\csname LT1\endcsname{\color{black}}%
      \expandafter\def\csname LT2\endcsname{\color{black}}%
      \expandafter\def\csname LT3\endcsname{\color{black}}%
      \expandafter\def\csname LT4\endcsname{\color{black}}%
      \expandafter\def\csname LT5\endcsname{\color{black}}%
      \expandafter\def\csname LT6\endcsname{\color{black}}%
      \expandafter\def\csname LT7\endcsname{\color{black}}%
      \expandafter\def\csname LT8\endcsname{\color{black}}%
    \fi
  \fi
    \setlength{\unitlength}{0.0500bp}%
    \ifx\gptboxheight\undefined%
      \newlength{\gptboxheight}%
      \newlength{\gptboxwidth}%
      \newsavebox{\gptboxtext}%
    \fi%
    \setlength{\fboxrule}{0.5pt}%
    \setlength{\fboxsep}{1pt}%
    \definecolor{tbcol}{rgb}{1,1,1}%
\begin{picture}(3740.00,3440.00)%
    \gplgaddtomacro\gplbacktext{%
      \csname LTb\endcsname
      \put(615,559){\makebox(0,0)[r]{\strut{}\small$10^{-9}$}}%
      \csname LTb\endcsname
      \put(615,862){\makebox(0,0)[r]{\strut{}\small$10^{-8}$}}%
      \csname LTb\endcsname
      \put(615,1165){\makebox(0,0)[r]{\strut{}\small$10^{-7}$}}%
      \csname LTb\endcsname
      \put(615,1468){\makebox(0,0)[r]{\strut{}\small$10^{-6}$}}%
      \csname LTb\endcsname
      \put(615,1771){\makebox(0,0)[r]{\strut{}\small$10^{-5}$}}%
      \csname LTb\endcsname
      \put(615,2074){\makebox(0,0)[r]{\strut{}\small$10^{-4}$}}%
      \csname LTb\endcsname
      \put(615,2377){\makebox(0,0)[r]{\strut{}\small$10^{-3}$}}%
      \csname LTb\endcsname
      \put(615,2680){\makebox(0,0)[r]{\strut{}\small$10^{-2}$}}%
      \csname LTb\endcsname
      \put(615,2983){\makebox(0,0)[r]{\strut{}\small$10^{-1}$}}%
      \csname LTb\endcsname
      \put(615,3286){\makebox(0,0)[r]{\strut{}\small$10^{0}$}}%
      \csname LTb\endcsname
      \put(1205,426){\makebox(0,0){\strut{}$10^{-2}$}}%
      \csname LTb\endcsname
      \put(1943,426){\makebox(0,0){\strut{}$10^{-1}$}}%
      \csname LTb\endcsname
      \put(2681,426){\makebox(0,0){\strut{}$\small 10^{0}$}}%
    }%
    \gplgaddtomacro\gplfronttext{%
      \csname LTb\endcsname
      \put(2316,1554){\makebox(0,0)[r]{\strut{}\scriptsize$h^{2}$}}%
      \csname LTb\endcsname
      \put(2316,1434){\makebox(0,0)[r]{\strut{}\scriptsize$h^{4}$}}%
      \csname LTb\endcsname
      \put(2316,1314){\makebox(0,0)[r]{\strut{}\scriptsize$(\infty)$}}%
      \csname LTb\endcsname
      \put(2316,1194){\makebox(0,0)[r]{\strut{}\scriptsize$\mathrm{A}(\sigma_0, \epsilon_0)$}}%
      \csname LTb\endcsname
      \put(2316,1074){\makebox(0,0)[r]{\strut{}\scriptsize$\mathrm{A}(\sigma_0, \epsilon_1)$}}%
      \csname LTb\endcsname
      \put(2316,954){\makebox(0,0)[r]{\strut{}\scriptsize$(\infty)$}}%
      \csname LTb\endcsname
      \put(2316,834){\makebox(0,0)[r]{\strut{}\scriptsize$\mathrm{A}(\sigma_0, \epsilon_0)$}}%
      \csname LTb\endcsname
      \put(2316,714){\makebox(0,0)[r]{\strut{}\scriptsize$\mathrm{A}(\sigma_0, \epsilon_1)$}}%
      \colorrgb{0.40,0.70,1.00}
      \put(2014,3280){\makebox(0,0){\strut{}\scalebox{0.9}{\tiny \color[HTML]{66b3ff}  20}}}%
      \colorrgb{0.40,0.70,1.00}
      \put(1792,3047){\makebox(0,0){\strut{}\scalebox{0.9}{\tiny \color[HTML]{66b3ff}  48}}}%
      \colorrgb{0.40,0.70,1.00}
      \put(1570,2848){\makebox(0,0){\strut{}\scalebox{0.9}{\tiny \color[HTML]{66b3ff}  112}}}%
      \colorrgb{0.40,0.70,1.00}
      \put(1348,2663){\makebox(0,0){\strut{}\scalebox{0.9}{\tiny \color[HTML]{66b3ff}  256}}}%
      \colorrgb{0.40,0.70,1.00}
      \put(1126,2483){\makebox(0,0){\strut{}\scalebox{0.9}{\tiny \color[HTML]{66b3ff}  576}}}%
      \colorrgb{0.40,0.70,1.00}
      \put(904,2303){\makebox(0,0){\strut{}\scalebox{0.9}{\tiny \color[HTML]{66b3ff}  1280}}}%
      \colorrgb{0.00,0.17,0.50}
      \put(2459,3280){\makebox(0,0){\strut{}\scalebox{0.9}{\tiny \color[HTML]{002b7f}  4}}}%
      \colorrgb{0.00,0.17,0.50}
      \put(2236,3044){\makebox(0,0){\strut{}\scalebox{0.9}{\tiny \color[HTML]{002b7f}  16}}}%
      \colorrgb{0.00,0.17,0.50}
      \put(2014,2835){\makebox(0,0){\strut{}\scalebox{0.9}{\tiny \color[HTML]{002b7f}  64}}}%
      \colorrgb{0.00,0.17,0.50}
      \put(1792,2645){\makebox(0,0){\strut{}\scalebox{0.9}{\tiny \color[HTML]{002b7f}  256}}}%
      \colorrgb{0.00,0.17,0.50}
      \put(1570,2461){\makebox(0,0){\strut{}\scalebox{0.9}{\tiny \color[HTML]{002b7f}  1024}}}%
      \colorrgb{0.00,0.17,0.50}
      \put(1348,2278){\makebox(0,0){\strut{}\scalebox{0.9}{\tiny \color[HTML]{002b7f}  4096}}}%
      \colorrgb{0.00,0.17,0.50}
      \put(1126,2095){\makebox(0,0){\strut{}\scalebox{0.9}{\tiny \color[HTML]{002b7f}  16384}}}%
      \colorrgb{0.00,0.35,1.00}
      \put(2014,3147){\makebox(0,0){\strut{}\scalebox{0.9}{\tiny \color[HTML]{0059ff}  3}}}%
      \colorrgb{0.00,0.35,1.00}
      \put(1792,2913){\makebox(0,0){\strut{}\scalebox{0.9}{\tiny \color[HTML]{0059ff}  8}}}%
      \colorrgb{0.00,0.35,1.00}
      \put(1570,2715){\makebox(0,0){\strut{}\scalebox{0.9}{\tiny \color[HTML]{0059ff}  28}}}%
      \colorrgb{0.00,0.35,1.00}
      \put(1348,2530){\makebox(0,0){\strut{}\scalebox{0.9}{\tiny \color[HTML]{0059ff}  88}}}%
      \colorrgb{0.00,0.35,1.00}
      \put(1126,2349){\makebox(0,0){\strut{}\scalebox{0.9}{\tiny \color[HTML]{0059ff}  240}}}%
      \colorrgb{0.00,0.35,1.00}
      \put(904,2174){\makebox(0,0){\strut{}\scalebox{0.9}{\tiny \color[HTML]{0059ff}  592}}}%
      \colorrgb{0.00,0.64,0.24}
      \put(2014,3280){\makebox(0,0){\strut{}\scalebox{0.9}{\tiny \color[HTML]{00a33c}  20}}}%
      \colorrgb{0.00,0.64,0.24}
      \put(1792,2749){\makebox(0,0){\strut{}\scalebox{0.9}{\tiny \color[HTML]{00a33c}  48}}}%
      \colorrgb{0.00,0.64,0.24}
      \put(1570,2314){\makebox(0,0){\strut{}\scalebox{0.9}{\tiny \color[HTML]{00a33c}  112}}}%
      \colorrgb{0.00,0.64,0.24}
      \put(1348,1925){\makebox(0,0){\strut{}\scalebox{0.9}{\tiny \color[HTML]{00a33c}  256}}}%
      \colorrgb{0.00,0.64,0.24}
      \put(1126,1553){\makebox(0,0){\strut{}\scalebox{0.9}{\tiny \color[HTML]{00a33c}  576}}}%
      \colorrgb{0.40,1.00,0.60}
      \put(2014,3147){\makebox(0,0){\strut{}\scalebox{0.9}{\tiny \color[HTML]{66ff99}  3}}}%
      \colorrgb{0.40,1.00,0.60}
      \put(1792,2616){\makebox(0,0){\strut{}\scalebox{0.9}{\tiny \color[HTML]{66ff99}  8}}}%
      \colorrgb{0.40,1.00,0.60}
      \put(1570,2181){\makebox(0,0){\strut{}\scalebox{0.9}{\tiny \color[HTML]{66ff99}  28}}}%
      \colorrgb{0.40,1.00,0.60}
      \put(1348,1792){\makebox(0,0){\strut{}\scalebox{0.9}{\tiny \color[HTML]{66ff99}  88}}}%
      \colorrgb{0.40,1.00,0.60}
      \put(1126,1420){\makebox(0,0){\strut{}\scalebox{0.9}{\tiny \color[HTML]{66ff99}  240}}}%
      \colorrgb{0.00,0.30,0.10}
      \put(2459,3147){\makebox(0,0){\strut{}\scalebox{0.9}{\tiny \color[HTML]{004d1a}  4}}}%
      \colorrgb{0.00,0.30,0.10}
      \put(2236,2616){\makebox(0,0){\strut{}\scalebox{0.9}{\tiny \color[HTML]{004d1a}  16}}}%
      \colorrgb{0.00,0.30,0.10}
      \put(2014,2177){\makebox(0,0){\strut{}\scalebox{0.9}{\tiny \color[HTML]{004d1a}  64}}}%
      \colorrgb{0.00,0.30,0.10}
      \put(1792,1788){\makebox(0,0){\strut{}\scalebox{0.9}{\tiny \color[HTML]{004d1a}  256}}}%
      \colorrgb{0.00,0.30,0.10}
      \put(1570,1417){\makebox(0,0){\strut{}\scalebox{0.9}{\tiny \color[HTML]{004d1a}  1024}}}%
      \colorrgb{0.00,0.30,0.10}
      \put(1348,1050){\makebox(0,0){\strut{}\scalebox{0.9}{\tiny \color[HTML]{004d1a}  4096}}}%
      \csname LTb\endcsname
      \put(48,1923){\rotatebox{-270.00}{\makebox(0,0){\strut{}$\|\Pi_{h}^{(\star)} \rho(t) - \rho(t))\|_{\mathrm{L}^2(\Omega)}$}}}%
      \csname LTb\endcsname
      \put(1685,93){\makebox(0,0){\strut{}Mesh size $h$}}%
    }%
    \gplbacktext
    \put(0,0){\includegraphics[width={187.00bp},height={172.00bp}]{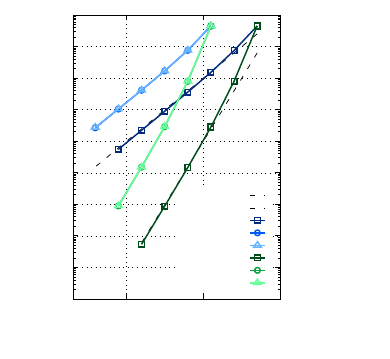}}%
    \gplfronttext
  \end{picture}%
\endgroup
\begingroup
  \makeatletter
  \providecommand\color[2][]{%
    \GenericError{(gnuplot) \space\space\space\@spaces}{%
      Package color not loaded in conjunction with
      terminal option `colourtext'%
    }{See the gnuplot documentation for explanation.%
    }{Either use 'blacktext' in gnuplot or load the package
      color.sty in LaTeX.}%
    \renewcommand\color[2][]{}%
  }%
  \providecommand\includegraphics[2][]{%
    \GenericError{(gnuplot) \space\space\space\@spaces}{%
      Package graphicx or graphics not loaded%
    }{See the gnuplot documentation for explanation.%
    }{The gnuplot epslatex terminal needs graphicx.sty or graphics.sty.}%
    \renewcommand\includegraphics[2][]{}%
  }%
  \providecommand\rotatebox[2]{#2}%
  \@ifundefined{ifGPcolor}{%
    \newif\ifGPcolor
    \GPcolortrue
  }{}%
  \@ifundefined{ifGPblacktext}{%
    \newif\ifGPblacktext
    \GPblacktexttrue
  }{}%
  \let\gplgaddtomacro\g@addto@macro
  \gdef\gplbacktext{}%
  \gdef\gplfronttext{}%
  \makeatother
  \ifGPblacktext
    \def\colorrgb#1{}%
    \def\colorgray#1{}%
  \else
    \ifGPcolor
      \def\colorrgb#1{\color[rgb]{#1}}%
      \def\colorgray#1{\color[gray]{#1}}%
      \expandafter\def\csname LTw\endcsname{\color{white}}%
      \expandafter\def\csname LTb\endcsname{\color{black}}%
      \expandafter\def\csname LTa\endcsname{\color{black}}%
      \expandafter\def\csname LT0\endcsname{\color[rgb]{1,0,0}}%
      \expandafter\def\csname LT1\endcsname{\color[rgb]{0,1,0}}%
      \expandafter\def\csname LT2\endcsname{\color[rgb]{0,0,1}}%
      \expandafter\def\csname LT3\endcsname{\color[rgb]{1,0,1}}%
      \expandafter\def\csname LT4\endcsname{\color[rgb]{0,1,1}}%
      \expandafter\def\csname LT5\endcsname{\color[rgb]{1,1,0}}%
      \expandafter\def\csname LT6\endcsname{\color[rgb]{0,0,0}}%
      \expandafter\def\csname LT7\endcsname{\color[rgb]{1,0.3,0}}%
      \expandafter\def\csname LT8\endcsname{\color[rgb]{0.5,0.5,0.5}}%
    \else
      \def\colorrgb#1{\color{black}}%
      \def\colorgray#1{\color[gray]{#1}}%
      \expandafter\def\csname LTw\endcsname{\color{white}}%
      \expandafter\def\csname LTb\endcsname{\color{black}}%
      \expandafter\def\csname LTa\endcsname{\color{black}}%
      \expandafter\def\csname LT0\endcsname{\color{black}}%
      \expandafter\def\csname LT1\endcsname{\color{black}}%
      \expandafter\def\csname LT2\endcsname{\color{black}}%
      \expandafter\def\csname LT3\endcsname{\color{black}}%
      \expandafter\def\csname LT4\endcsname{\color{black}}%
      \expandafter\def\csname LT5\endcsname{\color{black}}%
      \expandafter\def\csname LT6\endcsname{\color{black}}%
      \expandafter\def\csname LT7\endcsname{\color{black}}%
      \expandafter\def\csname LT8\endcsname{\color{black}}%
    \fi
  \fi
    \setlength{\unitlength}{0.0500bp}%
    \ifx\gptboxheight\undefined%
      \newlength{\gptboxheight}%
      \newlength{\gptboxwidth}%
      \newsavebox{\gptboxtext}%
    \fi%
    \setlength{\fboxrule}{0.5pt}%
    \setlength{\fboxsep}{1pt}%
    \definecolor{tbcol}{rgb}{1,1,1}%
\begin{picture}(3740.00,3440.00)%
    \gplgaddtomacro\gplbacktext{%
      \csname LTb\endcsname
      \put(615,559){\makebox(0,0)[r]{\strut{}\small$10^{-4}$}}%
      \csname LTb\endcsname
      \put(615,1241){\makebox(0,0)[r]{\strut{}\small$10^{-3}$}}%
      \csname LTb\endcsname
      \put(615,1923){\makebox(0,0)[r]{\strut{}\small$10^{-2}$}}%
      \csname LTb\endcsname
      \put(615,2604){\makebox(0,0)[r]{\strut{}\small$10^{-1}$}}%
      \csname LTb\endcsname
      \put(615,3286){\makebox(0,0)[r]{\strut{}\small$10^{0}$}}%
      \csname LTb\endcsname
      \put(1205,426){\makebox(0,0){\strut{}$10^{-2}$}}%
      \csname LTb\endcsname
      \put(1943,426){\makebox(0,0){\strut{}$10^{-1}$}}%
      \csname LTb\endcsname
      \put(2681,426){\makebox(0,0){\strut{}$\small 10^{0}$}}%
    }%
    \gplgaddtomacro\gplfronttext{%
      \csname LTb\endcsname
      \put(2316,1074){\makebox(0,0)[r]{\strut{}\scriptsize$h^{2}$}}%
      \csname LTb\endcsname
      \put(2316,954){\makebox(0,0)[r]{\strut{}\scriptsize$(\infty)$}}%
      \csname LTb\endcsname
      \put(2316,834){\makebox(0,0)[r]{\strut{}\scriptsize $\mathrm{A}(\sigma_0, \epsilon_0)$}}%
      \csname LTb\endcsname
      \put(2316,714){\makebox(0,0)[r]{\strut{}\scriptsize $\mathrm{A}(\sigma_1, \epsilon_1)$}}%
      \colorrgb{0.00,0.35,1.00}
      \put(2014,2893){\makebox(0,0){\strut{}\scalebox{0.9}{\tiny \color[HTML]{0059ff}  20}}}%
      \colorrgb{0.00,0.35,1.00}
      \put(1792,2832){\makebox(0,0){\strut{}\scalebox{0.9}{\tiny \color[HTML]{0059ff}  48}}}%
      \colorrgb{0.00,0.35,1.00}
      \put(1570,2769){\makebox(0,0){\strut{}\scalebox{0.9}{\tiny \color[HTML]{0059ff}  112}}}%
      \colorrgb{0.00,0.35,1.00}
      \put(1348,2732){\makebox(0,0){\strut{}\scalebox{0.9}{\tiny \color[HTML]{0059ff}  256}}}%
      \colorrgb{0.00,0.35,1.00}
      \put(1126,2580){\makebox(0,0){\strut{}\scalebox{0.9}{\tiny \color[HTML]{0059ff}  576}}}%
      \colorrgb{0.00,0.35,1.00}
      \put(904,2428){\makebox(0,0){\strut{}\scalebox{0.9}{\tiny \color[HTML]{0059ff}  1280}}}%
      \colorrgb{0.00,0.17,0.50}
      \put(2459,2760){\makebox(0,0){\strut{}\scalebox{0.9}{\tiny \color[HTML]{002b7f}  4}}}%
      \colorrgb{0.00,0.17,0.50}
      \put(2236,2870){\makebox(0,0){\strut{}\scalebox{0.9}{\tiny \color[HTML]{002b7f}  16}}}%
      \colorrgb{0.00,0.17,0.50}
      \put(2014,2811){\makebox(0,0){\strut{}\scalebox{0.9}{\tiny \color[HTML]{002b7f}  64}}}%
      \colorrgb{0.00,0.17,0.50}
      \put(1792,2564){\makebox(0,0){\strut{}\scalebox{0.9}{\tiny \color[HTML]{002b7f}  256}}}%
      \colorrgb{0.00,0.17,0.50}
      \put(1570,2154){\makebox(0,0){\strut{}\scalebox{0.9}{\tiny \color[HTML]{002b7f}  1024}}}%
      \colorrgb{0.00,0.17,0.50}
      \put(1348,1676){\makebox(0,0){\strut{}\scalebox{0.9}{\tiny \color[HTML]{002b7f}  4096}}}%
      \colorrgb{0.00,0.17,0.50}
      \put(1126,1243){\makebox(0,0){\strut{}\scalebox{0.9}{\tiny \color[HTML]{002b7f}  16384}}}%
      \colorrgb{0.00,0.17,0.50}
      \put(904,827){\makebox(0,0){\strut{}\scalebox{0.9}{\tiny \color[HTML]{002b7f}  65536}}}%
      \colorrgb{0.40,0.70,1.00}
      \put(1570,2614){\makebox(0,0){\strut{}\scalebox{0.9}{\tiny \color[HTML]{66b3ff}  152}}}%
      \colorrgb{0.40,0.70,1.00}
      \put(1348,2230){\makebox(0,0){\strut{}\scalebox{0.9}{\tiny \color[HTML]{66b3ff}  532}}}%
      \colorrgb{0.40,0.70,1.00}
      \put(1126,1716){\makebox(0,0){\strut{}\scalebox{0.9}{\tiny \color[HTML]{66b3ff}  1604}}}%
      \colorrgb{0.40,0.70,1.00}
      \put(904,1261){\makebox(0,0){\strut{}\scalebox{0.9}{\tiny \color[HTML]{66b3ff}  3928}}}%
      \csname LTb\endcsname
      \put(48,1923){\rotatebox{-270.00}{\makebox(0,0){\strut{}$\|\Pi_{h}^{(\star)} \rho(t) - \rho(t))\|_{\mathrm{L}^2(\Omega)}$}}}%
      \csname LTb\endcsname
      \put(1685,93){\makebox(0,0){\strut{}Mesh size $h$}}%
    }%
    \gplbacktext
    \put(0,0){\includegraphics[width={187.00bp},height={172.00bp}]{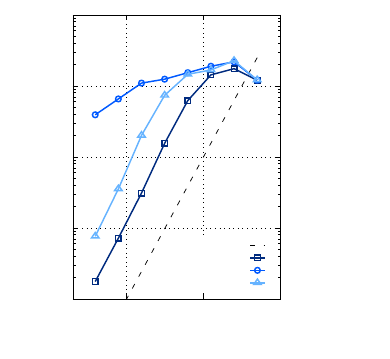}}%
    \gplfronttext
  \end{picture}%
\endgroup
 
	}
 \end{center}
 \captionof{figure}{\Fabrice{$\mathrm{L}^2$-norm approximation error of the charge density for the manufactured solution at $t=0.5$ (left) and diocotron instability at $t=0$ (right) as functions of the mesh size, for $p_1$ (blue curves) and $p_3$ (green curves) splines. Admissibility tolerances and generalized-spaces are parameterized by $(\epsilon_0,\epsilon_1)=(0,10^{-3})$ and $(\sigma_0,\sigma_1)=(0,4)$.}}
\label{fig:1}
\end{center}
We now consider the diocotron instability test case and restrict our attention to the initial state, for which the exact charge density is known. In this configuration, sparse-grid methods struggle to accurately represent the solution because of its strong anisotropy. Consequently, even at the initial time, the convergence rate deteriorates and departs from the theoretical estimate of $\mathcal{O}(h^{p+1}|\log h|^{d-1})$. The $\mathrm{L}^2$-norm error of the projected charge density at $t=0$ is shown as a function of the mesh size in the right panel of~\cref{fig:1}. Only linear B-splines are considered, as it was observed in~\cite{deluzet26} that increasing the spline degree provides only marginal accuracy improvements.

The admissibility tolerance is chosen as either $\epsilon_0=0$ or $\epsilon_1=Ch^2$, where $C=\mathcal{O}(1)$ is a scaling constant, while the generalized-space parameter is set to either $\sigma_0=0$ or $\sigma_1=4$. Increasing the generalized-space parameter enriches the approximation space by incorporating additional subspaces, thereby increasing the number of mesh nodes. Consequently, the adaptive truncation strategy is essential to preserve the low computational complexity of sparse-grid approximations while outperforming the standard approach.

The adaptive mesh refinement strategy substantially improves the accuracy of the HSG-$(1)$ method. In particular, it recovers the same second-order convergence rate as HSG-$(\infty)$, namely $\mathcal{O}(h^2)$, although with a larger error constant. At the same time, it requires significantly fewer mesh nodes than the full-grid method, reducing their number by approximately a factor of four on the finest meshes considered.

\subsection{Total error}
We now investigate the impact of the mesh refinement strategy on the total error, which consists of both grid-based and statistical components and depends on three discretization parameters: the mesh size, the spline degree, and the total number of particles. Throughout this section, we restrict our attention to linear B-splines, as this configuration provides the best performance for the diocotron instability test case.

For each experiment presented in this section, we perform $15$ independent simulations using different random seeds to generate the initial particle distribution. The reported results correspond to the mean and standard deviation over these $15$ realizations. We fix the number of particles per species to $N=1,000,000$, corresponding to $N$ electrons and $N$ ions when both species are modeled, which is a representative value for the test cases considered. With this choice, the total error is dominated by the grid-based error on coarse meshes and by the statistical error on fine meshes, allowing both error regimes to be examined. The errors are evaluated at time $t=0.5$ for the manufactured solution and at time $t=0$ for the diocotron instability. The admissibility tolerance and generalized-space parameters are set to $(\sigma_0,\epsilon_0)=(0,0)$ for the manufactured solution and to $(\sigma_1,\epsilon_1)=(4,10^{-3})$ for the diocotron instability.

The $\mathrm{L}^2$-norm error of the charge density approximation is shown as a function of the mesh size in~\cref{fig:2}, with the manufactured solution displayed in the left panel and the diocotron instability in the right panel.
\begin{center}
\noindent
\begin{minipage}[c]{0.65\textwidth}
  \centering
  \resizebox{\linewidth}{!}{%
\begingroup
  \makeatletter
  \providecommand\color[2][]{%
    \GenericError{(gnuplot) \space\space\space\@spaces}{%
      Package color not loaded in conjunction with
      terminal option `colourtext'%
    }{See the gnuplot documentation for explanation.%
    }{Either use 'blacktext' in gnuplot or load the package
      color.sty in LaTeX.}%
    \renewcommand\color[2][]{}%
  }%
  \providecommand\includegraphics[2][]{%
    \GenericError{(gnuplot) \space\space\space\@spaces}{%
      Package graphicx or graphics not loaded%
    }{See the gnuplot documentation for explanation.%
    }{The gnuplot epslatex terminal needs graphicx.sty or graphics.sty.}%
    \renewcommand\includegraphics[2][]{}%
  }%
  \providecommand\rotatebox[2]{#2}%
  \@ifundefined{ifGPcolor}{%
    \newif\ifGPcolor
    \GPcolortrue
  }{}%
  \@ifundefined{ifGPblacktext}{%
    \newif\ifGPblacktext
    \GPblacktexttrue
  }{}%
  \let\gplgaddtomacro\g@addto@macro
  \gdef\gplbacktext{}%
  \gdef\gplfronttext{}%
  \makeatother
  \ifGPblacktext
    \def\colorrgb#1{}%
    \def\colorgray#1{}%
  \else
    \ifGPcolor
      \def\colorrgb#1{\color[rgb]{#1}}%
      \def\colorgray#1{\color[gray]{#1}}%
      \expandafter\def\csname LTw\endcsname{\color{white}}%
      \expandafter\def\csname LTb\endcsname{\color{black}}%
      \expandafter\def\csname LTa\endcsname{\color{black}}%
      \expandafter\def\csname LT0\endcsname{\color[rgb]{1,0,0}}%
      \expandafter\def\csname LT1\endcsname{\color[rgb]{0,1,0}}%
      \expandafter\def\csname LT2\endcsname{\color[rgb]{0,0,1}}%
      \expandafter\def\csname LT3\endcsname{\color[rgb]{1,0,1}}%
      \expandafter\def\csname LT4\endcsname{\color[rgb]{0,1,1}}%
      \expandafter\def\csname LT5\endcsname{\color[rgb]{1,1,0}}%
      \expandafter\def\csname LT6\endcsname{\color[rgb]{0,0,0}}%
      \expandafter\def\csname LT7\endcsname{\color[rgb]{1,0.3,0}}%
      \expandafter\def\csname LT8\endcsname{\color[rgb]{0.5,0.5,0.5}}%
    \else
      \def\colorrgb#1{\color{black}}%
      \def\colorgray#1{\color[gray]{#1}}%
      \expandafter\def\csname LTw\endcsname{\color{white}}%
      \expandafter\def\csname LTb\endcsname{\color{black}}%
      \expandafter\def\csname LTa\endcsname{\color{black}}%
      \expandafter\def\csname LT0\endcsname{\color{black}}%
      \expandafter\def\csname LT1\endcsname{\color{black}}%
      \expandafter\def\csname LT2\endcsname{\color{black}}%
      \expandafter\def\csname LT3\endcsname{\color{black}}%
      \expandafter\def\csname LT4\endcsname{\color{black}}%
      \expandafter\def\csname LT5\endcsname{\color{black}}%
      \expandafter\def\csname LT6\endcsname{\color{black}}%
      \expandafter\def\csname LT7\endcsname{\color{black}}%
      \expandafter\def\csname LT8\endcsname{\color{black}}%
    \fi
  \fi
    \setlength{\unitlength}{0.0500bp}%
    \ifx\gptboxheight\undefined%
      \newlength{\gptboxheight}%
      \newlength{\gptboxwidth}%
      \newsavebox{\gptboxtext}%
    \fi%
    \setlength{\fboxrule}{0.5pt}%
    \setlength{\fboxsep}{1pt}%
    \definecolor{tbcol}{rgb}{1,1,1}%
\begin{picture}(3740.00,3440.00)%
    \gplgaddtomacro\gplbacktext{%
      \csname LTb\endcsname
      \put(615,916){\makebox(0,0)[r]{\strut{}\small$10^{-2}$}}%
      \csname LTb\endcsname
      \put(615,2101){\makebox(0,0)[r]{\strut{}\small$10^{-1}$}}%
      \csname LTb\endcsname
      \put(615,3286){\makebox(0,0)[r]{\strut{}\small$10^{0}$}}%
      \csname LTb\endcsname
      \put(1205,426){\makebox(0,0){\strut{}$10^{-2}$}}%
      \csname LTb\endcsname
      \put(1943,426){\makebox(0,0){\strut{}$10^{-1}$}}%
      \csname LTb\endcsname
      \put(2681,426){\makebox(0,0){\strut{}$\small 10^{0}$}}%
    }%
    \gplgaddtomacro\gplfronttext{%
      \csname LTb\endcsname
      \put(2316,1074){\makebox(0,0)[r]{\strut{}\scriptsize$h^{2}$}}%
      \csname LTb\endcsname
      \put(2316,954){\makebox(0,0)[r]{\strut{}\scriptsize$(\infty)$}}%
      \colorrgb{0.00,0.17,0.50}
      \put(2459,2972){\makebox(0,0){\strut{}\scalebox{0.9}{\tiny \color[HTML]{002b7f}  4}}}%
      \colorrgb{0.00,0.17,0.50}
      \put(2236,2049){\makebox(0,0){\strut{}\scalebox{0.9}{\tiny \color[HTML]{002b7f}  16}}}%
      \colorrgb{0.00,0.17,0.50}
      \put(2014,1288){\makebox(0,0){\strut{}\scalebox{0.9}{\tiny \color[HTML]{002b7f}  64}}}%
      \colorrgb{0.00,0.17,0.50}
      \put(1792,1265){\makebox(0,0){\strut{}\scalebox{0.9}{\tiny \color[HTML]{002b7f}  256}}}%
      \colorrgb{0.00,0.17,0.50}
      \put(1570,1610){\makebox(0,0){\strut{}\scalebox{0.9}{\tiny \color[HTML]{002b7f}  1024}}}%
      \colorrgb{0.00,0.17,0.50}
      \put(1348,1968){\makebox(0,0){\strut{}\scalebox{0.9}{\tiny \color[HTML]{002b7f}  4096}}}%
      \colorrgb{0.00,0.17,0.50}
      \put(1126,2326){\makebox(0,0){\strut{}\scalebox{0.9}{\tiny \color[HTML]{002b7f}  16384}}}%
      \csname LTb\endcsname
      \put(2316,834){\makebox(0,0)[r]{\strut{}\scriptsize$\mathrm{A}(\sigma_0, \epsilon_0)$}}%
      \colorrgb{0.40,0.70,1.00}
      \put(2014,2972){\makebox(0,0){\strut{}\scalebox{0.9}{\tiny \color[HTML]{66b3ff}  20}}}%
      \colorrgb{0.40,0.70,1.00}
      \put(1792,2060){\makebox(0,0){\strut{}\scalebox{0.9}{\tiny \color[HTML]{66b3ff}  48}}}%
      \colorrgb{0.40,0.70,1.00}
      \put(1570,1368){\makebox(0,0){\strut{}\scalebox{0.9}{\tiny \color[HTML]{66b3ff}  112}}}%
      \colorrgb{0.40,0.70,1.00}
      \put(1348,1257){\makebox(0,0){\strut{}\scalebox{0.9}{\tiny \color[HTML]{66b3ff}  256}}}%
      \colorrgb{0.40,0.70,1.00}
      \put(1126,1476){\makebox(0,0){\strut{}\scalebox{0.9}{\tiny \color[HTML]{66b3ff}  576}}}%
      \csname LTb\endcsname
      \put(2316,714){\makebox(0,0)[r]{\strut{}\scriptsize$\mathrm{A}(\sigma_0, \epsilon_1)$}}%
      \colorrgb{0.00,0.35,1.00}
      \put(2014,2839){\makebox(0,0){\strut{}\scalebox{0.9}{\tiny \color[HTML]{0059ff}  12}}}%
      \colorrgb{0.00,0.35,1.00}
      \put(1792,1926){\makebox(0,0){\strut{}\scalebox{0.9}{\tiny \color[HTML]{0059ff}  10}}}%
      \colorrgb{0.00,0.35,1.00}
      \put(1570,1192){\makebox(0,0){\strut{}\scalebox{0.9}{\tiny \color[HTML]{0059ff}  36}}}%
      \colorrgb{0.00,0.35,1.00}
      \put(1348,1064){\makebox(0,0){\strut{}\scalebox{0.9}{\tiny \color[HTML]{0059ff}  59}}}%
      \colorrgb{0.00,0.35,1.00}
      \put(1126,1088){\makebox(0,0){\strut{}\scalebox{0.9}{\tiny \color[HTML]{0059ff}  64}}}%
      \csname LTb\endcsname
      \put(48,1923){\rotatebox{-270.00}{\makebox(0,0){\strut{}$\|\rho_h^{(\star),\tau} - \rho(t))\|_{\mathrm{L}^2(\Omega)}$}}}%
      \csname LTb\endcsname
      \put(1685,93){\makebox(0,0){\strut{}Mesh size $h$}}%
    }%
    \gplbacktext
    \put(0,0){\includegraphics[width={187.00bp},height={172.00bp}]{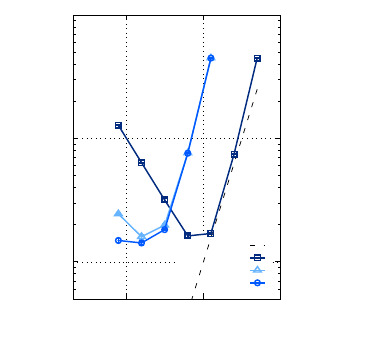}}%
    \gplfronttext
  \end{picture}%
\endgroup
\hspace{-0.33\textwidth}
\begingroup
  \makeatletter
  \providecommand\color[2][]{%
    \GenericError{(gnuplot) \space\space\space\@spaces}{%
      Package color not loaded in conjunction with
      terminal option `colourtext'%
    }{See the gnuplot documentation for explanation.%
    }{Either use 'blacktext' in gnuplot or load the package
      color.sty in LaTeX.}%
    \renewcommand\color[2][]{}%
  }%
  \providecommand\includegraphics[2][]{%
    \GenericError{(gnuplot) \space\space\space\@spaces}{%
      Package graphicx or graphics not loaded%
    }{See the gnuplot documentation for explanation.%
    }{The gnuplot epslatex terminal needs graphicx.sty or graphics.sty.}%
    \renewcommand\includegraphics[2][]{}%
  }%
  \providecommand\rotatebox[2]{#2}%
  \@ifundefined{ifGPcolor}{%
    \newif\ifGPcolor
    \GPcolortrue
  }{}%
  \@ifundefined{ifGPblacktext}{%
    \newif\ifGPblacktext
    \GPblacktexttrue
  }{}%
  \let\gplgaddtomacro\g@addto@macro
  \gdef\gplbacktext{}%
  \gdef\gplfronttext{}%
  \makeatother
  \ifGPblacktext
    \def\colorrgb#1{}%
    \def\colorgray#1{}%
  \else
    \ifGPcolor
      \def\colorrgb#1{\color[rgb]{#1}}%
      \def\colorgray#1{\color[gray]{#1}}%
      \expandafter\def\csname LTw\endcsname{\color{white}}%
      \expandafter\def\csname LTb\endcsname{\color{black}}%
      \expandafter\def\csname LTa\endcsname{\color{black}}%
      \expandafter\def\csname LT0\endcsname{\color[rgb]{1,0,0}}%
      \expandafter\def\csname LT1\endcsname{\color[rgb]{0,1,0}}%
      \expandafter\def\csname LT2\endcsname{\color[rgb]{0,0,1}}%
      \expandafter\def\csname LT3\endcsname{\color[rgb]{1,0,1}}%
      \expandafter\def\csname LT4\endcsname{\color[rgb]{0,1,1}}%
      \expandafter\def\csname LT5\endcsname{\color[rgb]{1,1,0}}%
      \expandafter\def\csname LT6\endcsname{\color[rgb]{0,0,0}}%
      \expandafter\def\csname LT7\endcsname{\color[rgb]{1,0.3,0}}%
      \expandafter\def\csname LT8\endcsname{\color[rgb]{0.5,0.5,0.5}}%
    \else
      \def\colorrgb#1{\color{black}}%
      \def\colorgray#1{\color[gray]{#1}}%
      \expandafter\def\csname LTw\endcsname{\color{white}}%
      \expandafter\def\csname LTb\endcsname{\color{black}}%
      \expandafter\def\csname LTa\endcsname{\color{black}}%
      \expandafter\def\csname LT0\endcsname{\color{black}}%
      \expandafter\def\csname LT1\endcsname{\color{black}}%
      \expandafter\def\csname LT2\endcsname{\color{black}}%
      \expandafter\def\csname LT3\endcsname{\color{black}}%
      \expandafter\def\csname LT4\endcsname{\color{black}}%
      \expandafter\def\csname LT5\endcsname{\color{black}}%
      \expandafter\def\csname LT6\endcsname{\color{black}}%
      \expandafter\def\csname LT7\endcsname{\color{black}}%
      \expandafter\def\csname LT8\endcsname{\color{black}}%
    \fi
  \fi
    \setlength{\unitlength}{0.0500bp}%
    \ifx\gptboxheight\undefined%
      \newlength{\gptboxheight}%
      \newlength{\gptboxwidth}%
      \newsavebox{\gptboxtext}%
    \fi%
    \setlength{\fboxrule}{0.5pt}%
    \setlength{\fboxsep}{1pt}%
    \definecolor{tbcol}{rgb}{1,1,1}%
\begin{picture}(3740.00,3440.00)%
    \gplgaddtomacro\gplbacktext{%
      \csname LTb\endcsname
      \put(1205,426){\makebox(0,0){\strut{}$10^{-2}$}}%
      \csname LTb\endcsname
      \put(1943,426){\makebox(0,0){\strut{}$10^{-1}$}}%
      \csname LTb\endcsname
      \put(2681,426){\makebox(0,0){\strut{}$\small 10^{0}$}}%
    }%
    \gplgaddtomacro\gplfronttext{%
      \csname LTb\endcsname
      \put(2316,1074){\makebox(0,0)[r]{\strut{}\scriptsize$h^{2}$}}%
      \csname LTb\endcsname
      \put(2316,954){\makebox(0,0)[r]{\strut{}\scriptsize$(\infty)$}}%
      \colorrgb{0.00,0.17,0.50}
      \put(2459,2338){\makebox(0,0){\strut{}\scalebox{0.9}{\tiny \color[HTML]{002b7f}  4}}}%
      \colorrgb{0.00,0.17,0.50}
      \put(2236,2491){\makebox(0,0){\strut{}\scalebox{0.9}{\tiny \color[HTML]{002b7f}  16}}}%
      \colorrgb{0.00,0.17,0.50}
      \put(2014,2387){\makebox(0,0){\strut{}\scalebox{0.9}{\tiny \color[HTML]{002b7f}  64}}}%
      \colorrgb{0.00,0.17,0.50}
      \put(1792,1972){\makebox(0,0){\strut{}\scalebox{0.9}{\tiny \color[HTML]{002b7f}  256}}}%
      \colorrgb{0.00,0.17,0.50}
      \put(1570,1477){\makebox(0,0){\strut{}\scalebox{0.9}{\tiny \color[HTML]{002b7f}  1024}}}%
      \colorrgb{0.00,0.17,0.50}
      \put(1348,1373){\makebox(0,0){\strut{}\scalebox{0.9}{\tiny \color[HTML]{002b7f}  4096}}}%
      \colorrgb{0.00,0.17,0.50}
      \put(1126,1433){\makebox(0,0){\strut{}\scalebox{0.9}{\tiny \color[HTML]{002b7f}  16384}}}%
      \colorrgb{0.00,0.17,0.50}
      \put(904,1604){\makebox(0,0){\strut{}\scalebox{0.9}{\tiny \color[HTML]{002b7f}  65536}}}%
      \csname LTb\endcsname
      \put(2316,834){\makebox(0,0)[r]{\strut{}\scriptsize$\mathrm{A}(\sigma_0, \epsilon_0)$}}%
      \colorrgb{0.40,0.70,1.00}
      \put(2236,2604){\makebox(0,0){\strut{}\scalebox{0.9}{\tiny \color[HTML]{66b3ff}  8}}}%
      \colorrgb{0.40,0.70,1.00}
      \put(2014,2530){\makebox(0,0){\strut{}\scalebox{0.9}{\tiny \color[HTML]{66b3ff}  20}}}%
      \colorrgb{0.40,0.70,1.00}
      \put(1792,2423){\makebox(0,0){\strut{}\scalebox{0.9}{\tiny \color[HTML]{66b3ff}  48}}}%
      \colorrgb{0.40,0.70,1.00}
      \put(1570,2316){\makebox(0,0){\strut{}\scalebox{0.9}{\tiny \color[HTML]{66b3ff}  112}}}%
      \colorrgb{0.40,0.70,1.00}
      \put(1348,2253){\makebox(0,0){\strut{}\scalebox{0.9}{\tiny \color[HTML]{66b3ff}  256}}}%
      \colorrgb{0.40,0.70,1.00}
      \put(1126,1998){\makebox(0,0){\strut{}\scalebox{0.9}{\tiny \color[HTML]{66b3ff}  576}}}%
      \colorrgb{0.40,0.70,1.00}
      \put(904,1769){\makebox(0,0){\strut{}\scalebox{0.9}{\tiny \color[HTML]{66b3ff}  1280}}}%
      \csname LTb\endcsname
      \put(2316,714){\makebox(0,0)[r]{\strut{}\scriptsize$\mathrm{A}(\sigma_1, \epsilon_1)$}}%
      \colorrgb{0.00,0.35,1.00}
      \put(2014,2254){\makebox(0,0){\strut{}\scalebox{0.9}{\tiny \color[HTML]{0059ff}  60}}}%
      \colorrgb{0.00,0.35,1.00}
      \put(1792,1844){\makebox(0,0){\strut{}\scalebox{0.9}{\tiny \color[HTML]{0059ff}  237}}}%
      \colorrgb{0.00,0.35,1.00}
      \put(1570,1483){\makebox(0,0){\strut{}\scalebox{0.9}{\tiny \color[HTML]{0059ff}  647}}}%
      \colorrgb{0.00,0.35,1.00}
      \put(1348,1258){\makebox(0,0){\strut{}\scalebox{0.9}{\tiny \color[HTML]{0059ff}  1240}}}%
      \colorrgb{0.00,0.35,1.00}
      \put(1126,1238){\makebox(0,0){\strut{}\scalebox{0.9}{\tiny \color[HTML]{0059ff}  1640}}}%
      \colorrgb{0.00,0.35,1.00}
      \put(904,1259){\makebox(0,0){\strut{}\scalebox{0.9}{\tiny \color[HTML]{0059ff}  1230}}}%
      \csname LTb\endcsname
      \put(1685,93){\makebox(0,0){\strut{}Mesh size $h$}}%
    }%
    \gplbacktext
    \put(0,0){\includegraphics[width={187.00bp},height={172.00bp}]{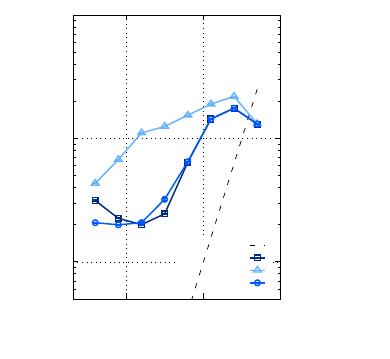}}%
    \gplfronttext
  \end{picture}%
\endgroup
\hspace{-0.15\textwidth}%
  }%
  \end{minipage}
  \captionof{figure}{\Fabrice{$\mathrm{L}^2$-norm error of the charge density for the manufactured solution at $t=0.5$ (left) and the diocotron instability at $t=0$ (right), as a function of the mesh size. Linear B-splines and $N=1{,}000{,}000$ particles are used. The admissibility tolerance and generalized-space parameters are set to either $(\sigma_0,\epsilon_0)=(0,0)$ or $(\sigma_1,\epsilon_1)=(4,10^{-3})$.}}
\label{fig:2}
 \end{center}
 
The adaptive mesh refinement strategy not only reduces the complexity of the approximation space while preserving the approximation accuracy, but also decreases the statistical error compared with both the sparse-grid and full-grid methods. These two improvements are closely related. By retaining only the mesh nodes that contribute most to the approximation, the average number of particles per active cell increases for a fixed total number of particles, thereby mitigating statistical noise over most of the computational domain.
Furthermore, enriching the approximation space with additional subspaces enables the sparse-grid approximation to achieve an accuracy comparable to that of the full-grid method for the diocotron instability while using significantly fewer mesh nodes. On fine meshes, it even outperforms the full-grid approximation owing to its lower statistical error. In particular, it achieves a more accurate approximation of the charge density while requiring $53$ times fewer mesh nodes than the full-grid method.
This substantial reduction in complexity results from two complementary mechanisms: (i) the enrichment of the sparse-grid approximation with hierarchical subspaces that are absent from the standard sparse-grid construction, allowing it to approach the accuracy of the full-grid method; and (ii) the adaptive selection of the mesh nodes that contribute most to the approximation, which reduces the dimension of the approximation space and, consequently, the statistical error.

Finally, we consider the diocotron instability beyond the initial state, after the instability has developed. The electron density at time $t=35$ is shown in~\cref{fig:5} for three methods with different configurations: STD-PIC with $N=2{,}621{,}440$ particles and $N_h=65{,}636$ mesh nodes; HSG-$(1)$-PIC using $N=588{,}800$ particles and $N_h=1{,}280$ mesh nodes; HSG-A$(\sigma,\epsilon)$ with $(\sigma,\epsilon)=(4,10^{-3})$, using the same number of particles but $N_h=1{,}695$ mesh nodes. The adaptive mesh is updated every $50$ iterations, i.e., $35$ times throughout the simulation. It is represented at time $t=35$ on the bottom right panel; black dots represent the sparse-grid adaptive mesh nodes and the full-grid mesh is represented in light grey. The mesh refinement strategy yields a substantial improvement in the quality of the charge density approximation for the same total number of particles while increasing the number of mesh nodes only marginally.

\begin{figure}
\begin{minipage}{\textwidth}
  \centering
  \begin{minipage}[]{0.37\textwidth}
\centering {\footnotesize STD-PIC} \\  
 \includegraphics[width=1\textwidth]{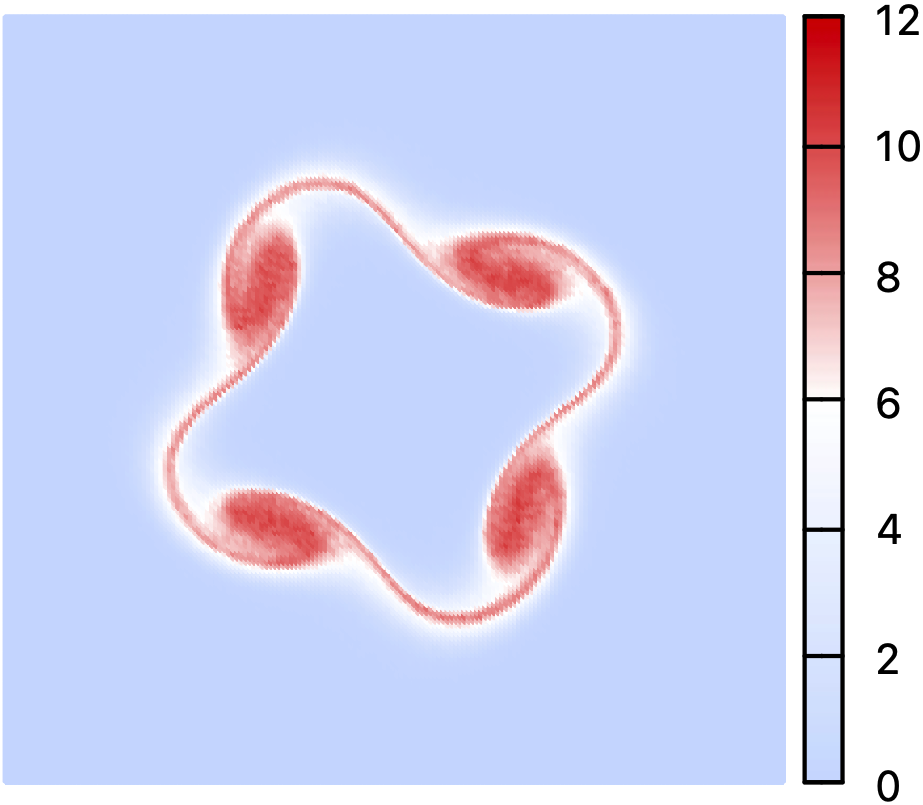}   
\end{minipage} \hspace{0.1\textwidth}%
  \begin{minipage}[]{0.37\textwidth}
\centering {\footnotesize HSG-$(1)$-PIC} \\  
 \includegraphics[width=1\textwidth]{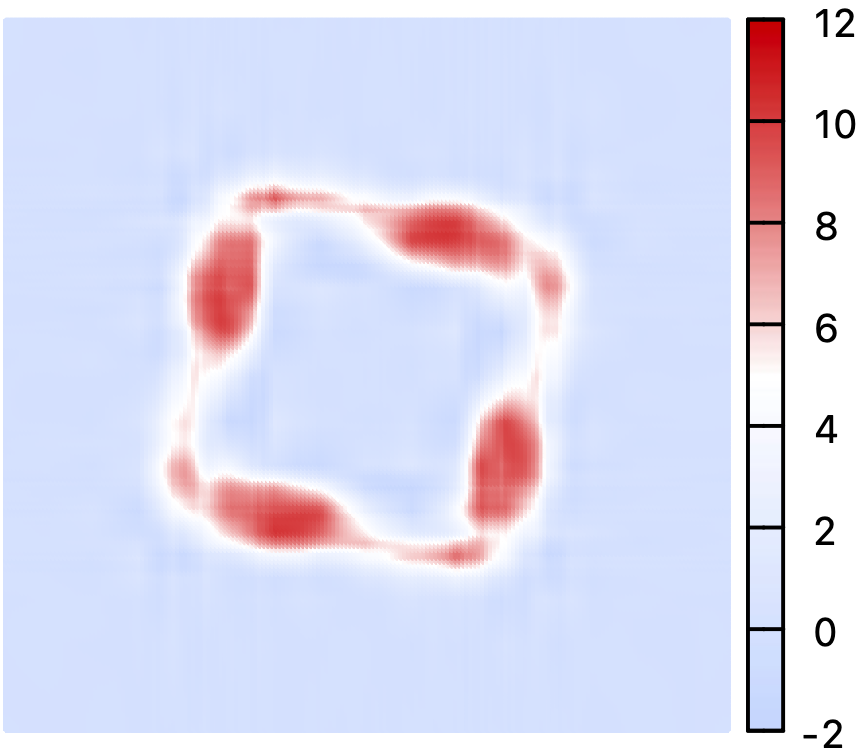}   
\end{minipage}  \\[0.5em]
  \begin{minipage}[]{0.37\textwidth}
\centering {\footnotesize HSG-A$(\sigma,\epsilon)$-PIC} \\  
 \includegraphics[width=1\textwidth]{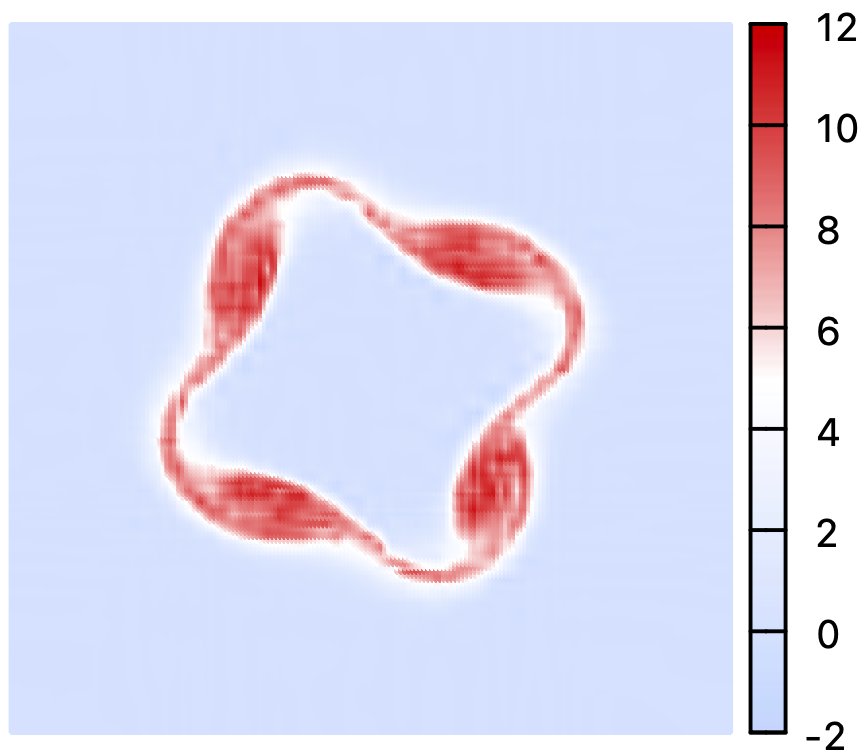}   
\end{minipage}  \hspace{0.1\textwidth}%
 \begin{minipage}[]{0.33\textwidth}
\centering {\footnotesize HSG-A$(\sigma,\epsilon)$-PIC mesh} \\  
 \includegraphics[width=1\textwidth]{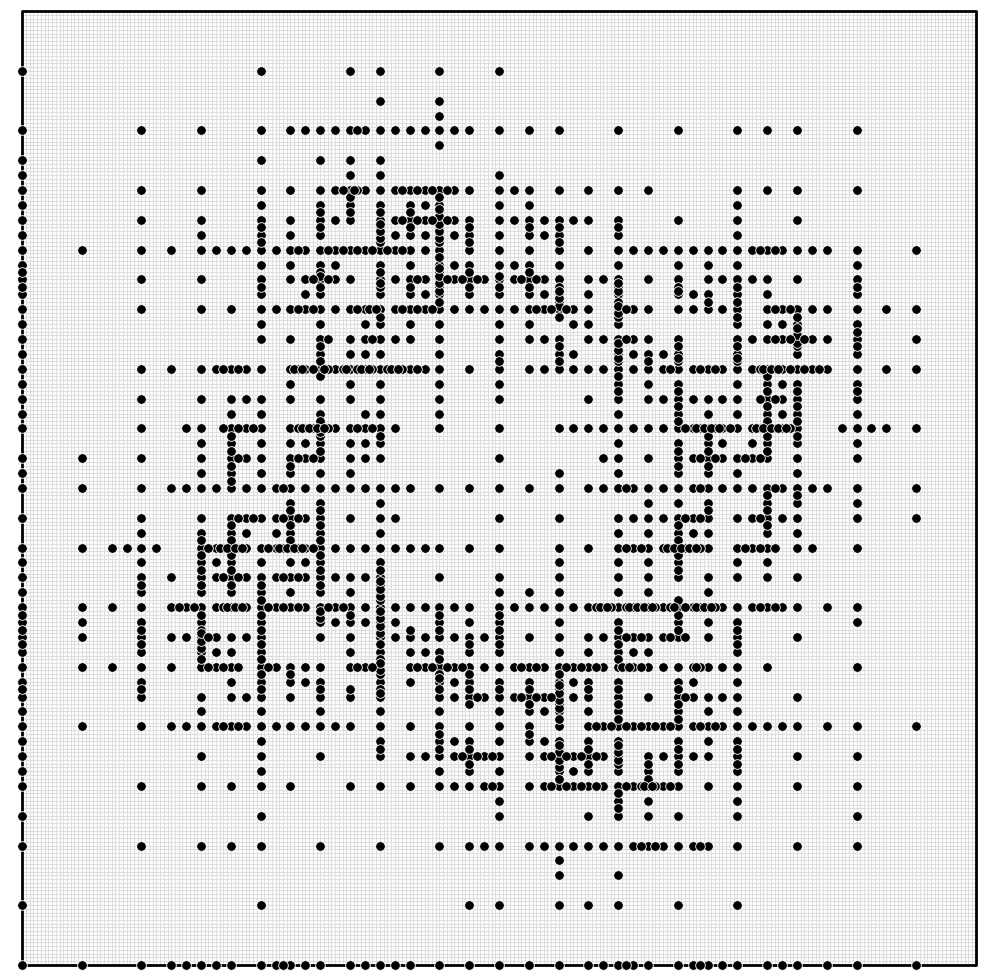}   
\end{minipage} 
\caption{Electron density for the diocotron instability at $t=35$, with mesh size $h=2^{-8}$ and linear B-splines. Three configurations are considered: STD-PIC with $N=2{,}621{,}440$ and $N_h=65{,}636$; HSG-$(1)$-PIC,
$N=588{,}800$, $N_h=1{,}280$;  HSG-A$(\sigma,\epsilon)$-PIC, with $\epsilon=10^{-3}$, $\sigma=4$, $N=588{,}800$, $N_h=1{,}695$. } \label{fig:5}
\end{minipage}
\end{figure}

\subsection{Performance}
We now compare the computational performance of the methods. The results reported in this section are intended to provide insight into their relative computational efficiency rather than to quantify the performance gains expected in large-scale plasma simulations. As shown in~\cite{deluzet23}, substantially larger speedups can be achieved in three-dimensional configurations with a dedicated parallel implementation.

All experiments are performed sequentially on a single CPU core, an Apple M4 processor with $32\mathrm{GB}$ of RAM. The methods are implemented in a code developed in Python and C++\footnote{The code is available at \url{https://gitlab.inria.fr/cguillet/sg-pic}}. The linear systems arising in the SGCT- and HSG-PIC methods are solved by a sparse Cholesky factorization using the \texttt{scipy.sparse} library~\cite{scipy20}, whereas those arising in the STD-PIC method are solved with a flexible GMRES solver preconditioned by algebraic multigrid from the \texttt{PETSc} library~\cite{petsc-user-ref}. We consider the manufactured solution test case, for which the exact solution is available at all times, enabling an accurate assessment of the approximation error. The Debye length is resolved whenever the mesh size satisfies $h<2^{-6}$.  The computational cost is assessed in terms of the total number of particles, mesh nodes, and the wall-clock execution time.

We report in~\cref{tab:1} the computational costs required to reduce the $\mathrm{L}^2$-norm approximation error of the charge density below the threshold $\varepsilon=10^{-2}$, at time $t=0.3$. For each configuration, we report the time execution per time step, averaged over the $15$ realizations, for the most computationally intensive steps of the algorithm: the charge density approximation (proj.), the field interpolation at the particle positions (inter.), the particle advance (push) and the mesh refinement (ref.).  The cost of the resolution of Poisson equation is negligible and therefore omitted here. The mesh refinement is performed every $7$ iterations, so that the time reported corresponds to three applications of the procedure, divided by $15$ iterations. 

\begin{table}[ht!]
\centering
\caption{Computational costs required to achieve a $\mathrm{L}^2$-norm error of
$\varepsilon=10^{-2}$ on the charge density at time $t=0.3$ for the manufactured solution.
Complexity and mean execution time per iteration, averaged over 15 iterations, are reported. HSG-A$(\sigma,\epsilon)$-PIC is run with $(\sigma,\epsilon)=(0,0.0005)$.}
\label{tab:1}
\setlength{\tabcolsep}{4pt}
\begin{tabular}{c|c|cc|cc|c|cccc}
\hline
method &
error &
$h$ & $p$ &
$N$ & $|N_h|$ &
\multicolumn{5}{c}{time per iteration [s]}
\\
\cline{7-11}
&
&
&
&
&
&
total & proj. & inter. & push & ref.
\\
\hline

STD
&0.0094
&$2^{-6}$&1
&28{,}672{,}000&4{,}096
&4.08&1.67&0.08&1.90&0
\\

HSG-$(\infty)$
&0.0097
&$2^{-6}$&1
&40{,}960{,}000&4{,}096
&29.4&20.1&6.4&3.09&0
\\

SGCT
&0.0100
&$2^{-7}$&1
&17{,}920{,}000&2{,}560
&15.1&13.3&0.63&1.17&0
\\

SGCT
&0.0099
&$2^{-6}$&3
&3{,}264{,}000&1{,}088
&2.89&2.48&0.28&0.20&0
\\

HSG-$(1)$
&0.0095
&$2^{-6}$&1
&2{,}816{,}000&256
&0.94&0.49&0.24&0.18&0
\\

HSG-$(\mathrm{E})$
& 0.0109
&$2^{-6}$&3
&1{,}920{,}000&160
&1.99&1.21&0.69&0.12&0
\\

HSG-A$(\sigma,\epsilon)$
& 0.0096
&$2^{-6}$&1
&1{,}620{,}000&91
&0.86&0.35&0.18&0.10&0.21
\\

\hline
\end{tabular}
\end{table}

All methods achieve the target accuracy, but with substantially different computational costs. The sparse-grid approaches, namely the SGCT-, HSG-$(1)$-, and HSG-$(\mathrm{E})$-PIC methods, require significantly fewer particles and mesh nodes than the STD-PIC method, even in the two-dimensional setting considered here, which is the least favorable regime for sparse-grid approaches, and despite the use of a non-optimized high-performance implementation.

Among the tested methods, HSG-A$(\sigma,\epsilon)$ provides the best computational efficiency. It requires $17$ times fewer particles and $45$ times fewer mesh nodes than STD-PIC, while reducing the wall-clock execution time by a factor of $5$.

{\cleparagraph
\section{Conclusion}
\label{sec:conclusion}
In this paper, we introduced new approximation spaces and a locally adaptive refinement strategy for the HSG-PIC method, with the objective of reducing the bias of sparse-grid PIC methods and decreasing the dimension of the approximation space while preserving their intrinsic noise-reduction properties. The proposed approach addresses two fundamental limitations of SGCT-PIC methods: the loss of accuracy for solutions with reduced mixed regularity and the difficulty of efficiently enriching the approximation space without reverting to a full-grid discretization.

First, we introduced an energy-based sparse-grid space that improves approximation efficiency by optimizing the relation between the $\mathrm{H}^1$-error and the number of degrees of freedom, as well as a family of generalized sparse-grid approximation spaces that provides a continuous transition between the classical sparse-grid and full-grid spaces. Numerical experiments confirmed that these spaces preserve the theoretical convergence rates of sparse-grid approximations while requiring substantially fewer mesh nodes than the full-grid discretization. Although the asymptotic optimality of the energy-based space is not necessarily reflected in pre-asymptotic regimes, it leads to a significantly more favorable, \Fabrice{dimensionality-independent}, complexity growth.

Second, we developed a locally adaptive refinement strategy based on the hierarchical structure of the HSG-PIC method. By exploiting hierarchical surpluses as local error indicators, the proposed algorithm selectively enriches the approximation space in regions where the sparse-grid representation is insufficient. The incremental construction of the adaptive space, combined with Schur complement eliminations, avoids solving the Galerkin problem on the complete enriched space and makes the refinement procedure computationally efficient.

The numerical results demonstrate the benefits of the proposed adaptive refinement algorithm. For solutions containing localized structures, the adaptive approximation substantially improves the charge density accuracy while preserving the statistical noise reduction characteristic of sparse-grid methods. This improvement results from two complementary effects: the enrichment of the approximation space through additional hierarchical subspaces, which reduces the bias, and the selective activation of relevant degrees of freedom, which increases the average number of particles per active cell and consequently reduces the statistical noise. 

Overall, the proposed locally adaptive HSG-PIC method provides a flexible compromise between the noise reduction of sparse-grid methods and the approximation capability of full-grid discretizations. It opens new perspectives for efficient simulations of kinetic plasma problems involving localized or anisotropic structures, for which nonadaptive sparse-grid approximations may fail to provide sufficient accuracy. Moreover, the B-spline formulation underlying the proposed approach naturally allows for an extension to non-rectangular geometries through geometric mappings, as proposed in~\cite{guillet25-1}. Such a development would broaden the applicability of adaptive HSG-PIC methods to more realistic plasma configurations. Future work will focus on the extension of the present method to fully three-dimensional simulations.
}
\section*{Acknowledgements}
This work has been carried out within the framework of the EUROfusion Consortium, funded by the European Union via the
Euratom Research and Training Programme (Grant Agreement No 101052200 — EUROfusion). 
Views and opinions expressed are however those of the author only and do not necessarily re­flect those of the European Union or the European Commission. Neither the European Union nor the European Commission can be held responsible for them.\\
This work has been supported by a grant from the French National Research Agency (ANR) project MATURATION (reference
ANR-22-CE46-0012) \\
Support from the FrFCM (Fédération de recherche pour la Fusion par Co­finement Magnétique) in the frame of the SPARCLE
project (SParse grid Acceleration for the paRticle-in-CelL mEthod) is also acknowledged.

\appendix

\bibliography{bib/bib}

@book{birdsall18,
	author = {Birdsall, C.K. and Langdon, A.B},
	publisher = {CRC Press},
	title = {Plasma physics via computer simulation},
	year = {2018}}

@article{bungartz99,
        title = {A Note on the Complexity of Solving {Poisson}'s Equation for Spaces of Bounded Mixed Derivatives},
        volume = {15},
        issn = {0885064X},
        number = {2},
        journal = {Journal of Complexity},
        author = {Bungartz, H.-J. and Griebel, M.},
        year = {1999},
        pages = {167--199},
}

@unpublished{deluzet26,
	author = {Deluzet, F. and Guillet, C. and Narski, J. },
	note = {Submitted to SIAM Journal on Numerical Analysis},
	title = {A hierarchical sparse-grid particle method for the {Vlasov}--{Poisson} system}}

@book{hockney88,
  title={Computer Simulation Using Particles},
  author={Hockney, R.W. and Eastwood, J.W.},
  isbn={9780070291089},
  series={Advanced book program},
  year={1981},
  publisher={McGraw-Hill International Book Company}
}

@article{bungartz04,
	author = {Bungartz, H.-J. and Griebel, M.},
	journal = {Acta Numerica},
	pages = {147--269},
	title = {Sparse grids},
	volume = {13},
	year = {2004}}

@article{griebel90,
	author = {Griebel, M. and Schneider, M. and Zenger, C.},
	journal = {Forschungsberichte, TU Munich},
	pages = {1-24},
	title = {A combination technique for the solution of sparse grid problems},
	volume = {TUM I 9038},
	year = {1990}}

@article{cottet84,
author = {Cottet, G.-H. and Raviart, P.-A.},
title = {Particle Methods for the One-Dimensional {Vlasov}--{Poisson} Equations},
journal = {SIAM Journal on Numerical Analysis},
volume = {21},
number = {1},
pages = {52-76},
year = {1984}}

@article{guillet24,
	author = {Guillet, C.},
	journal = {Multiscale Modeling and Simulation},
	pages = {891--924},
	publisher = {Society for Industrial & Applied Mathematics (SIAM)},
	title = {Semi-Implicit Particle-in-Cell Methods Embedding Sparse Grid Reconstructions},
	volume = {22},
	year = {2024},
	}

@article{deluzet22,
	author = {Deluzet, F. and Fubiani, G. and Garrigues, L. and Guillet, C. and Narski, J.},
	journal = {ESAIM: M2AN},
	pages = {1809--1841},
	title = {Sparse grid reconstructions for Particle-In-Cell methods},
	volume = {56},
	year = {2022}
	}

@article{guillet25,
title = {Energy-conserving Particle-In-Cell scheme based on {Galerkin} methods with sparse grids},
journal = {Journal of Computational Physics},
volume = {524},
pages = {113739},
year = {2025},
author = {C. Guillet},
}

@article{deluzet22-1,
title = {Efficient parallelization for 3d-3v sparse grid Particle-In-Cell: Shared memory architectures},
journal = {Journal of Computational Physics},
volume = {480},
pages = {112022},
year = {2023},
author = {Deluzet, F. and Fubiani, G. and Garrigues, L. and Guillet, C. and Narski, J.}}

@article{deluzet23,
title = {Efficient parallelization for 3D-3V sparse grid Particle-In-Cell: Single {GPU} architectures},
journal = {Computer Physics Communications},
volume = {289},
pages = {108755},
year = {2023},
author = {Deluzet, F. and Fubiani, G. and Garrigues, L. and Guillet, C. and Narski, J.}
}

@article{guillet25-1,
      title={Error Estimates for Sparse Tensor Products of {B}-spline Approximation Spaces}, 
      author={Guillet, C.},
      journal = {ESAIM: M2AN},
      year={2026},

}

@article{garrigues24,
    author = {Garrigues, L. and Chung-To-Sang, M. and Fubiani, G. and Guillet, C. and Deluzet, F. and Narski, J.},
    title = {Acceleration of particle-in-cell simulations using sparse grid algorithms. {I.} {Application} to dual frequency capacitive discharges},
    journal = {Physics of Plasmas},
    volume = {31},
    number = {7},
    pages = {073907},
    year = {2024}
}

@article{garrigues24-1,
    author = {Garrigues, L. and Chung-To-Sang, M. and Fubiani, G. and Guillet, C. and Deluzet, F. and Narski, J.},
    title = {Acceleration of particle-in-cell simulations using sparse grid algorithms. {II.} {Application} to partially magnetized low temperature plasmas},
    journal = {Physics of Plasmas},
    volume = {31},
    number = {7},
    pages = {073908},
    year = {2024}}

@article{garrigues21,
    author = {Garrigues, L. and Tezenas du Montcel, B. and Fubiani, G. and Bertomeu, F. and Deluzet, F. and Narski, J.},
    title = {Application of sparse grid combination techniques to low temperature plasmas particle-in-cell simulations. {I.} {Capacitively} coupled radio frequency discharges},
    journal = {Journal of Applied Physics},
    volume = {129},
    number = {15},
    pages = {153303},
    year = {2021}}

@article{deluzet25,
author = {Deluzet, F. and Guillet, C. and Narski, J. and Pace, P.},
title = {High-Order Sparse-{PIC} Methods: Analysis and Numerical Investigations},
journal = {SIAM Journal on Numerical Analysis},
volume = {63},
number = {3},
pages = {1281-1314},
year = {2025}}

@article{ricketson17,
	author = {Ricketson, L.F. and Cerfon, A.J.},
	journal = {Plasma Phys. Control. Fusion},
	number = {2},
	pages = {024002},
	title = {Sparse grid techniques for particle-in-cell schemes},
	volume = {59},
	year = {2017}}

@article{muralikrishnan21,
title = {Sparse grid-based adaptive noise reduction strategy for particle-in-cell schemes},
journal = {Journal of Computational Physics: X},
volume = {11},
pages = {100094},
year = {2021},
issn = {2590-0552},
author = {Muralikrishnan, S. and Cerfon, A.J. and Frey, M. and Ricketson, L.F. and Adelmann, A.}}

@article{driscoll90,
	author = {Driscoll, C. F. and Fine, K. S.},
	journal = {Physics of Fluids B: Plasma Physics},
	number = {6},
	pages = {1359--1366},
	publisher = {AIP Publishing},
	title = {Experiments on vortex dynamics in pure electron plasmas},
	volume = {2},
	year = {1990}}

@article{denton95,
  title={{$\delta$f} algorithm},
  author={Denton, R.E. and Kotschenreuther, M.},
  journal={Journal of Computational Physics},
  volume={119},
  number={2},
  pages={283--294},
  year={1995}}

@article{gassama07,
	author = {Gassama, S. and Sonnendrücker, E. and Schneider, K. and Farge, M. and Domingues, M. O.},
	title = {Wavelet denoising for postprocessing of a 2D Particle-In-Cell code},
	url= "https://doi.org/10.1051/proc:2007013",
	journal = {ESAIM: Proc.},
	year = 2007,
	volume = 16,
	pages = "195-210"}

@article{langdon70,
  title={Effects of the spatial grid in simulation plasmas},
  author={Langdon, A.B.},
  journal={Journal of Computational Physics},
  volume={6},
  number={2},
  pages={247--267},
  year={1970},
}

@article{scipy20,
  author  = {Virtanen, P. and Gommers, R. and Oliphant, T.E. and
            Haberland, M. and Reddy, T. and Cournapeau, D. and
            Burovski, E. and Peterson, P. and Weckesser, W. and
            Bright, J. and {van der Walt}, S. J. and
            Brett, M. and Wilson, J. and Millman, K. J. and
            Mayorov, N. and Nelson, A.R.J. and Jones, E. and
            Kern, R. and Larson, E. and Carey, C. J. and
            Polat, I. and Feng, Y. and Moore, E.W. and
            {VanderPlas}, J. and Laxalde, D. and Perktold, J. and
            Cimrman, R. and Henriksen, I. and Quintero, E. A. and
            Harris, C.R. and Archibald, A.M. and
            Ribeiro, A. H. and Pedregosa, F. and
            {van Mulbregt}, P. and {SciPy 1.0 Contributors}},
  title   = {{{SciPy} 1.0: Fundamental Algorithms for Scientific
            Computing in Python}},
  journal = {Nature Methods},
  year    = {2020},
  volume  = {17},
  pages   = {261--272}
}

@techreport{petsc-user-ref,
    title = {{PETSc/TAO} Users Manual},
	author = {Balay, S. and Abhyankar, S. and Adams, M.F. and Benson, S. and Brown, J. and Brune, P. and Buschelman, K. and Constantinescu, E. and Dalcin, L. and Dener, A. and Eijkhout, V. and Faibussowitsch, J. and Gropp, W.D. and Hapla, V. and Isaac, T. and Jolivet, P. and Karpeev, D. and Kaushik, D. and Knepley, M.G. and Kong, F. and Kruger, S. and May, D.A. and McInnes, L.C. and Mills, R.T. and Mitchell, L. and Munson, T. and Roman, J.E. and Rupp, K. and Sanan, P. and Sarich J. and Smith, B.F. and Zampini, S. and Zhang, H. and Zhang, J.},
	institution = {Argonne National Laboratory},
	number = {ANL-21/39 - Revision 3.21},
	year = {2024}}

@article{sydora99,
title = {Low-noise electromagnetic and relativistic particle-in-cell plasma simulation models},
journal = {Journal of Computational and Applied Mathematics},
volume = {109},
number = {1},
pages = {243-259},
year = {1999},
author = {R.D. Sydora}}

@article{aydemir94,
    author = {Aydemir, A. Y.},
    title = {A unified Monte Carlo interpretation of particle simulations and applications to non--neutral plasmas},
    journal = {Physics of Plasmas},
    volume = {1},
    number = {4},
    pages = {822-831},
    year = {1994}}

@article{crestetto18,
  TITLE = {{A particle micro-macro decomposition based numerical scheme for collisional kinetic equations in the diffusion scaling}},
  AUTHOR = {Crestetto, A. and Crouseilles, N. and Lemou, M.},
  JOURNAL = {{Communications in Mathematical Sciences}},
  PUBLISHER = {{International Press}},
  VOLUME = {16},
  NUMBER = {4},
  PAGES = {887-911},
  YEAR = {2018}}

@InProceedings{obersteiner21,
author = {Obersteiner, M. and Bungartz, H.-J.},
title="A Spatially Adaptive Sparse Grid Combination Technique for Numerical Quadrature",
booktitle="Sparse Grids and Applications - Munich 2018",
year="2021",
publisher="Springer International Publishing",
pages="161--185"}

@article{obersteiner21-1,
author = {Obersteiner, M. and Bungartz, H.-J.},
title = {A Generalized Spatially Adaptive Sparse Grid Combination Technique with Dimension-wise Refinement},
journal = {SIAM Journal on Scientific Computing},
volume = {43},
number = {4},
pages = {A2381-A2403},
year = {2021}}

@article{pfluger10,
title = {Spatially adaptive sparse grids for high-dimensional data-driven problems},
journal = {Journal of Complexity},
volume = {26},
number = {5},
author = {Pflüger, D. and Peherstorfer, B. and Bungartz, H.-J.},
pages = {508-522},
year = {2010}}

@article{tranquilli22,
title = {A deterministic verification strategy for electrostatic particle-in-cell algorithms in arbitrary spatial dimensions using the method of manufactured solutions},
journal = {Journal of Computational Physics},
volume = {448},
pages = {110751},
year = {2022},
author = {Tranquilli, P. and Ricketson, L. and Chac{\'o}n, L.}}

@article{bungartz91,
  title={An adaptive {Poisson} solver using hierarchical bases and sparse grids},
  author={H.-J. Bungartz},
  journal={Forschungsberichte, TU Munich},
  year={1991},
  volume={TUM I 9130},
  pages={1-21},
}

@article{bokanowski13,
	author = {Bokanowski, O. and Garcke, J. and Griebel, M. and Klompmaker, I.},
	doi = {10.1007/s10915-012-9648-x},
	isbn = {1573-7691},
	journal = {Journal of Scientific Computing},
	number = {3},
	pages = {575--605},
	title = {An Adaptive Sparse Grid Semi-Lagrangian Scheme for First Order {Hamilton}-{Jacobi} {Bellman} Equations},
	volume = {55},
	year = {2013}}

@InProceedings{griebel99,
author="Griebel, M. and Zumbusch, G.",
title="Adaptive Sparse Grids for Hyperbolic Conservation Laws",
booktitle="Hyperbolic Problems: Theory, Numerics, Applications",
year="1999",
publisher="Birkh{\"a}user Basel",
pages="411--422"}
\bibliographystyle{siamplain}

\end{document}